\pgfplotsset{compat=newest}
\numberwithin{equation}{section}
\definecolor{myBlue}{HTML}{1F77B4}
\definecolor{myGreen}{HTML}{2CA02C}
\definecolor{myOrange}{HTML}{FF7F0E}
\definecolor{myRed}{HTML}{D62728}
\newcommand{\Ab}{\mathbb{A}}
\newcommand{\Cb}{\mathbb{C}}
\newcommand{\Db}{\mathbb{D}}
\newcommand{\Ib}{\mathbb{I}}
\newcommand{\Nb}{\mathbb{N}}
\newcommand{\Pb}{\mathbb{P}}
\newcommand{\Rb}{\mathbb{R}}
\newcommand{\Sb}{\mathbb{S}}
\newcommand{\Tb}{\mathbb{T}}
\newcommand{\Zb}{\mathbb{Z}}
\newcommand{\Cs}{\mathscr{C}}
\newcommand{\Ls}{\mathscr{L}}
\newcommand{\Ss}{\mathscr{S}}
\newcommand{\Dc}{\mathcal{D}}
\newcommand{\Lc}{\mathcal{L}}
\newcommand{\Nc}{\mathcal{N}}
\newcommand{\Pc}{\mathcal{P}}
\newcommand{\Rc}{\mathcal{R}}
\newcommand{\Vc}{\mathcal{V}}
\newcommand{\Hr}{\mathrm{H}}
\newcommand{\Lr}{\mathrm{L}}
\newcommand{\Ir}{\mathrm{I}}
\newcommand{\Tt}{\mathtt{T}}
\newcommand{\Kt}{\mathtt{K}}
\newcommand{\Pt}{\mathtt{P}}
\newcommand{\Ct}{\mathtt{C}}
\newcommand{\e}{\mathsf{e}}
\newcommand{\im}{\mathsf{i}\mkern1mu}
\newcommand{\bJ}{\mathop{}\!\mathsf{J}}
\newcommand{\Hu}{\mathop{}\!\mathsf{H}^{(1)}}
\newcommand{\bI}{\mathop{}\!\mathsf{I}}
\newcommand{\sca}{\mathsf{sc}}
\newcommand{\inc}{\mathsf{in}}
\newcommand{\loc}{\mathrm{loc}}
\newcommand{\comp}{\mathrm{comp}}
\newcommand{\out}{\mathsf{out}}
\newcommand{\bds}{\mathsf{bds}}
\newcommand{\pla}{\mathsf{pla}}
\newcommand{\wgm}{\text{WGM}}
\newcommand{\fem}{\mathsf{fem}}
\newcommand{\di}[1]{\mathop{}\!\mathrm{d}#1}
\DeclareMathOperator{\OO}{\mathcal{O}}
\DeclareMathOperator{\Div}{div}
\DeclareMathOperator{\supp}{supp}
\newcommand{\Ind}[1]{\mathop{}\!\mathbf{1}_{#1}}
\DeclareMathOperator{\Card}{\mathrm{Card}}
\DeclareMathOperator{\Dist}{\mathrm{dist}}
\DeclareMathOperator{\Diag}{\mathrm{diag}}
\DeclareMathOperator{\spe}{Spec}
\DeclareMathOperator{\spess}{Spec_{ess}}
\DeclareMathOperator{\spdis}{Spec_{dis}}
\newcommand{\abs}[1]{\left\lvert#1\right\rvert}
\newcommand{\norm}[1]{\left\lVert#1\right\rVert}
\newcommand{\opnorm}[1]{{\left\vert\kern-0.25ex\left\vert\kern-0.25ex\left\vert#1\right\vert\kern-0.25ex\right\vert\kern-0.25ex\right\vert}}
\newcommand{\plr}[1]{\left(#1\right)}
\newcommand{\clr}[1]{\left[#1\right]}
\newcommand{\alr}[1]{\left\langle#1\right\rangle}
\newcommand{\ilr}[1]{[\![#1]\!]}
\newcommand{\restr}[2]{\left. #1\right\rvert_{#2}}
\newcommand{\setst}[2]{\left\lbrace#1\ \middle\vert\ #2\right\rbrace}
\newcommand{\lapc}{a}
\newcommand{\cavc}{a_\mathsf{c}}
\newcommand{\cav}{\mathsf{c}}
\newcommand{\cavb}{b_\mathsf{c}}
\newcommand{\Rsv}{\mathfrak{Res}}
\newcommand{\eps}{\varepsilon}
\newcommand{\vp}{\varphi}
\newcommand{\lu}{\underline{\lambda}}        \newcommand{\ku}{\underline{k}}
           \newcommand{\uu}{\underline{u}}              \newcommand{\Mu}{\underline{\mu}}            \newcommand{\vu}{\underline{v}}              \newcommand{\ls}{\breve{\lambda}}            \newcommand{\Obf}{\boldsymbol{\mathsf{L}}}   \newcommand{\Rbf}{\boldsymbol{\mathsf{R}}}   \newcommand{\seta}{\varsigma}                \newcommand{\ftz}{\tau_0}                    \newcommand{\htz}{\widehat{\tau}_0}
\theoremstyle{definition}
\newtheorem{definition}{Definition}[section]
\newtheorem{notation}[definition]{Notation}
\theoremstyle{plain}
\newtheorem{lemma}[definition]{Lemma}
\newtheorem{theorem}[definition]{Theorem}
\newtheorem{corollary}[definition]{Corollary}
\theoremstyle{remark}
\newtheorem{remark}[definition]{Remark}
\definecolor{coAR}{HTML}{D62728}
\begin{document}

\title[Unbounded sign-changing transmission problems]{Scattering resonances in unbounded transmission problems with sign-changing coefficient}

\author[C. Carvalho]{Camille Carvalho}
\address{Department of Applied Mathematics,
    University of California, Merced,
    5200 North Lake Road,
    Merced, CA 95343, USA
    \emph{and}
    Univ Lyon, INSA Lyon, UJM, UCBL, ECL, CNRS UMR 5208, ICJ, F-69621, France.
}
\email{camille.carvalho@insa-lyon.fr}

\author[Z. Moitier]{Zo{\"\i}s Moitier}
\address{POEMS, CNRS, Inria, ENSTA Paris, Institut Polytechnique de Paris, 828 Boulevard des Mar\'echaux, 91120, Palaiseau, France
}
\email{zois.moitier@ensta-paris.fr}

\date{\today}

\subjclass[2010]{35P25, 35B40, 78A45}

\keywords{Helmholtz Equation; Scattering resonances; Sign-changing coefficient; Asymptotic expansions.}

\begin{abstract}
    It is well-known that classical optical cavities can exhibit localized phenomena associated to scattering resonances, leading to numerical instabilities in approximating the solution.
    This result can be established via the ``quasimodes to resonances'' argument from the black-box scattering framework.
    Those localized phenomena concentrate at the inner boundary of the cavity and are called whispering gallery modes.
    In this paper we investigate scattering resonances for unbounded transmission problems with sign-changing coefficient (corresponding to optical cavities with negative optical properties, for example made of metamaterials).
    Due to the change of sign of optical properties, previous results cannot be applied directly, and interface phenomena at the metamaterial-dielectric interface (such as the so-called surface plasmons) emerge.
    We establish the existence of scattering resonances for arbitrary two-dimensional smooth metamaterial cavities.
    The proof relies on an asymptotic characterization of the resonances, and showing that problems with sign-changing coefficient naturally fit the black box scattering framework.
    Our asymptotic analysis reveals that, depending on the metamaterial's properties, scattering resonances situated closed to the real axis are associated to surface plasmons.
    Examples for several metamaterial cavities are provided.
\end{abstract}

\maketitle

\section{Introduction}

Unbounded transmission problems with sign-changing coefficients arise in electromagnetics, in particular when one considers Maxwell's equations in the time harmonic regime (with Transverse Electric or Transverse Magnetic polarization) in dielectric-metamaterial structures (typically a bounded metamaterial cavity surrounded by a dielectric).
Contrary to common materials, metamaterials such as the Negative-Index Metamaterials (NIM) exhibit unusual optical properties: for instance a real-valued negative effective dielectric permittivity and/or a negative effective permeability at some frequency range.
There is a great interest in modeling metamaterial cavities to confine and control light.
In particular, at optical frequencies, localized interface surface waves called surface plasmons can arise at dielectric-metamaterial interfaces~\cite{maier2007plasmonics}.
The field of plasmonics is very active as surface plasmons offer strong light enhancement, with applications to next-generation sensors, antennas, high-resolution imaging, cloaking and other~\cite{sannomiya2008situ}.
However, surface plasmons are very sensitive to the geometry and therefore challenging to capture, experimentally and numerically~\cite{BCCC16, helsing2018helmholtz}.
Mathematically, surface plasmons are solutions of the homogeneous Maxwell's equations, they are oscillatory waves along the dielectric-metamaterial interface while exponentially decreasing in both transverse directions.

In classical transmission problems (meaning dielectric-dielectric structures), it has been shown that light can be confined by exciting the so-called Whispering Gallery Modes (WGM)~\cite{RigDum11}.
WGM are essentially supported in the neighborhood of the interior cavity boundary and are associated to \emph{scattering resonances}~\cite{BalDauMoi21}.
It is well-known that the approximation of light scattering in dielectric optical micro-cavities can be drastically affected by WGM, in particular if the excitation wavenumber of the source is close to a WGM resonance~\cite{MoiSpe2019, BalDauMoi21}.
In those cases the norm of the truncated solution operator explodes, which is observed numerically by the solution blowing-up (peaks): we call this \emph{scattering instabilities}.
Knowing the exact value of the scattering resonances is in general challenging (or impossible).
However, one can obtain an asymptotic characterization of the scattering resonances, as done in~\cite{BalDauMoi21}.

The above results do not directly apply to metamaterial cavities due to the change of sign of the optical parameter(s) and the additional interface plasmonic behaviors.
There exists a framework that allows study of a large class of scattering problems, the so-called black box scattering framework. However, it is not immediately clear that unbounded transmission problems with sign-changing coefficients fit in this framework.
In particular, well-posedness of the problem needs more attention, and spectral properties to define a \emph{black box Hamiltonian} (including self-adjointness, lower semi-bound, etc.) may not be true.
Also, surface plasmons have been mainly characterized and investigated in the context of the quasi-static approximation (\emph{e.g.}~\cite{bonnet2013radiation, grieser2014plasmonic, BCCC16, ammari2017mathematical, CaChCi17, bonnetier2018plasmonic, BHM20-hal}) --- where an analytic expression can be found --- therefore there is a need to obtain a characterization for the full problem (no quasi-static) to identify the associated metamaterial scattering resonances.

The goal of this paper is to establish the existence of metamaterial scattering resonances (causing scattering instabilities) via an asymptotic characterization of quasi-resonances (in other words the considered problems fit in the black-box scattering framework), this for various two-dimensional metamaterial cavities (arbitrary smooth shape, with one arbitrary varying negative optical parameter).
Using the \( \mathtt{T} \)-coercivity theory~\cite{BoChCi12, BCCC16, BoCaCi18}, and in the spirit of~\cite{BalDauMoi21}, we establish that the associated spectral operator of scalar transmission problem with sign-changing coefficient is a black box Hamiltonian, and we carry out an asymptotic approximation of the metamaterial scattering resonances.
In this case we find that there is an additional interface resonance family (compared to classical cavities) related to surface plasmons, and a specific scaling is required to asymptotically characterize them.
This family can be located close to the real axis, and is responsible for scattering instabilities.

The paper is organized as follows.
We present the problem and main results in \cref{sec:start}.
To illustrate the metamaterial scattering resonances and their effect, we provide a pedagogical example (case of a circular metamaterial cavity with constant negative coefficient) in \cref{sec:pedago}.
\Cref{sec:construct_quasi} presents the general approach for arbitrary metamaterial cavities, including the construction of the asymptotic approximation at any order.
\Cref{sec:resolv_results} proves their connection to the truncated solution operator (extension of the ``quasimodes to resonances'' result) and their consequence on scattering instabilities.
\Cref{sec:back_to_sca} presents numerical illustrations of the metamaterial scattering resonances, and \cref{sec:conclu} presents our concluding remarks.
\Cref{app:prop_P} provides theoretical results about the problem operator, and \cref{app:sc_asy_details} provides additional results and proofs needed in \cref{sec:construct_quasi}.

\section{Problem setting and main result}\label{sec:start}

\subsection{Mathematical settings}\label{sec:math_settings}

Let us start by introducing the unbounded transmission {problem} with sign-changing coefficient, and its spectral analogous.
We consider an open bounded connected set \( \Omega \subset \Rb^2 \) with smooth boundary \( \Gamma = \partial\Omega \), that represents a transparent (penetrable) optical \emph{cavity} characterized by \( \cavc \in \Cs^\infty\plr{\overline{\Omega}; (-\infty, 0)} \).
The cavity is surrounded by a homogeneous background.
We denote \( \lapc \in \Lr^\infty(\Rb^2) \) the piece-wise smooth function such that
\begin{equation}\label{eq:fct_lap}
    \lapc \equiv \cavc\ \text{on}\ {\Omega}
    \qquad \text{and} \qquad
    \lapc \equiv 1\ \text{on}\ \Rb^2 \setminus \overline{\Omega},
\end{equation}
(see \cref{fig:scheme_geo} for a sketch).
\begin{figure}[!hbtp]
    \begin{tikzpicture}
        \begin{scope}[scale=0.75]
            \filldraw [draw=black,thick,fill=gray!25]
            (2,0) .. controls +(0,1) and +(1,-0.25) .. (0,1.5)
            .. controls +(-1,0.25) and +(0,1) .. (-2,0)
            .. controls +(0,-1) and +(-0.5,0) .. (-1,-1.25)
            .. controls +(0.5,0) and +(-0.5,0) .. (0,-1)
            .. controls +(0.5,0) and +(-0.5,0) .. (1,-1.5)
            .. controls +(0.5,0) and +(0,-1) .. (2,0);
        \end{scope}
        \draw (1,-0.3) node {\( \Omega \)};
        \draw (2.25,-0.3) node {\( \Rb^2 \setminus \overline{\Omega} \)};
        \draw (-0.25,0.3) node {\( \lapc \equiv \cavc < 0 \)};
        \draw (-2.25,0.3) node {\( \lapc \equiv 1 \)};
    \end{tikzpicture}
    \caption{Sketch of the geometry.}\label{fig:scheme_geo}
\end{figure}
We consider the problem: For \( f \in \Lr_\comp^2(\Rb^2) \), \( g \in \Lr^{{2}}(\Gamma) \)\footnote{One could consider data in classical dual functional spaces.
    Then results presented here still hold.
}, and \( k >0 \), find \( u \in \Hr_\loc^1(\Rb^2) \) such that
\begin{equation}\label{eq:ScatteringProblem}
    \begin{dcases}
        -\Div\plr{\lapc^{-1} \, \nabla u} - k^2 u = f
         & \text{in } \Rb^2
        \\
        \clr{u}_\Gamma = 0, \quad \clr{\lapc^{-1} \, \partial_n u}_\Gamma = g
         & \text{across } \Gamma
        \\
        u \quad k\text{-outgoing}
    \end{dcases}
\end{equation}
and the associated spectral problem:  Find \( (\ell, u) \in \Cb \setminus \Rb_- \times \Hr_\loc^1(\Rb^2) \) such that \( u \not\equiv 0 \) and
\begin{equation}\label{eq:ResonanceProblem}
    \begin{dcases}
        -\Div\plr{\lapc^{-1} \, \nabla u} = \ell^2 u
         & \text{in } \Rb^2
        \\
        \clr{u}_\Gamma = 0, \quad \clr{\lapc^{-1} \, \partial_n u}_\Gamma = 0
         & \text{across } \Gamma
        \\
        u \quad \ell\text{-outgoing}
    \end{dcases}.
\end{equation}
Above, \( \Hr_\loc^1(\Rb^2 ) \coloneqq \setst{u \in \Lr_\loc^2(\Rb^2)}{\forall \chi \in \Cs_\comp^\infty(\Rb^2),\ \chi u \in \Hr^1(\Rb^2)} \) and \( n \colon \Gamma \to \Sb^1 \) is the unit normal vector outward to \( \Omega \).
Given \( X \), we denote \( \clr{X}_\Gamma(\gamma) = \lim_{x \to \gamma^+}X (x) -  \lim_{x \to \gamma^-}X (x) \), for any \( \gamma \in \Gamma \), the jump condition across \( \Gamma \).
The jump conditions \( {[u]}_\Gamma = 0 \) and \( {[\lapc^{-1} \, \partial_{n} u]}_\Gamma = 0 \) will be referred to as the \emph{transmission conditions}.
We say that \( v \) is \emph{\( k \)-outgoing} if it satisfies the outgoing wave condition:
\begin{equation}\label{eq:OWC}
    v(r, \theta) = \sum_{m \in \Zb} w_m(r)\, \e^{\im m \theta} = \sum_{m \in \Zb} c_m \Hu_m(k r)\, \e^{\im m \theta}
\end{equation}
with polar coordinates \( (r,\theta) \) such that \( r > \sup_{x\in\Omega} |x| \), \( \theta \in \Rb / 2\pi\Zb \), \( \Hu_m \) the Hankel function of the first kind of order \( m \), and \( {(c_m)}_{m \in \Zb} \in \Cb^\Zb \).
For a pair \( (\ell, u) \) solution of \cref{eq:ResonanceProblem}, \( \ell \) is called a \emph{scattering resonance} and the function \( u \) is a \emph{resonant mode} associated to \( \ell \).

We define \( P \colon u \mapsto -\Div(\lapc^{-1} \, \nabla u) \) the \( \Lr^2(\Rb^2) \) operator from \cref{eq:ResonanceProblem} with the domain \( \Dc(P) \coloneqq \setst{u \in \Lr^2(\Rb^2)}{\Div(\lapc^{-1} \, \nabla u) \in \Lr^2(\Rb^2)} \)\footnote{One can show that
    \( \Dc(P) = \setst{u \in \Hr^1\plr{\Rb^2}}{
        \Delta \restr{u}{\Omega} \in \Lr^2(\Omega),\ \Delta\restr{u}{\Rb^2 \setminus \overline{\Omega}} \in \Lr^2\plr{\Rb^2 \setminus \overline{\Omega}},\ \clr{\lapc^{-1} \partial_n u}_\Gamma = 0
    } \) (see \cref{lem:P_domain}).
    This second definition will be heavily used in \Cref{sec:resolv_results}.\label{ft:DP}
}.
We also define the local version of the domain \( \Dc_\loc(P) \coloneqq \setst{u \in \Lr_\loc^2(\Rb^2)}{\forall \chi \in \Cs_\comp^\infty(\Rb^2),\ \chi u \in \Dc(P)} \).

\bigskip

For classical cavities (\( \cavc >0 \)), one can show that \cref{eq:ScatteringProblem} is well-posed in \( \Hr_\loc^1(\Rb^2) \), the operator \( (P, \Dc(P)) \) is self-adjoint, its spectrum is real and admits a lower bound.
This allows us in particular to work in the framework of the \emph{black box scattering}~\cite[Definition 4.6]{DyaZwo2019}, where one can check that there is an underlying \emph{black box Hamiltonian} (see~\cref{lem:bbH} for more details).
We can define \( \Rsv \colon k \mapsto \plr{P - k^2}^{-1} \) the resolvent\footnote{\( \Rsv \) is defined on the upper-half of the complex plane (\( \Im(k) > 0 \)).
    Using the \emph{black box scattering} framework (see~\cite{DyaZwo2019}), we can extend the resolvent to \( \Cb \setminus \Rb_- \).
} associated to \( P \).
An asymptotic characterization of the scattering resonances close to the real axis (called \textit{quasi-resonances} \( \ku_m \)) is provided in~\cite{BalDauMoi21}, and with the ``quasimodes to resonances'' result \cite{stefanov1995distribution, stefanov1996neumann,TanZwo98,stefanov1999quasimodes}, it is proved that true resonances \( {(\ell_m)}_m \) are super-algebraically close to quasi-resonances \( \ku_m \).
As a consequence the solution of \cref{eq:ScatteringProblem} blows-up for \( k = \ku_m \) (and the norm of the truncated resolvent \( \Rsv(\ku_m) \) explodes).

Due to the change of sign of \( \lapc \), the ``quasimodes to resonances'' result from the black box scattering framework doesn't directly apply in our case.
First, well-posedness of \cref{eq:ScatteringProblem} in \( \Hr_\loc^1(\Rb^2) \) is not guaranteed as \(u \mapsto -\Div(\lapc^{-1} \, \nabla u) \) does not necessarily define a Fredholm operator (or in other words the coercivity of the associated weak form of \cref{eq:ScatteringProblem} is not guaranteed).
Additionally, spectral requirements on \( P \) to be a black box Hamiltonian are not obvious.
Finally, it is not clear whether there exist resonances close to the real axis that are associated to localized interface modes (potentially related to surface plasmons).

The goal of this paper is to show that the ``quasimodes to resonances'' result still applies for unbounded transmission problems with sign-changing coefficient, and to provide an asymptotic characterization of the scattering resonances.

\begin{remark}\label{rem:2.1}
    \hfill
    \begin{itemize}[leftmargin=*]
        \item The \( k \)-outgoing condition defined in \cref{eq:OWC} is equivalent to \( v \) satisfying the so-called Sommerfeld radiation condition if, and only if, \( k >0 \).
              This outgoing condition is more general, and will be also used for the associated spectral problem \cref{eq:ResonanceProblem}, where one can have \( \ell \)-outgoing solutions with \( \ell \in \Cb \setminus \Rb_- \).
        \item The fact that we look for \( \ell \in \Cb \setminus \Rb_- \) allows us to work on the logarithmic plane. This guarantees analyticity of the Hankel function~\cite[Chapter 10]{Nist}, and it is necessary for spectral problems set in spaces of even dimension~\cite[Definition 4.6]{DyaZwo2019}, typically \cref{eq:ResonanceProblem} set in \(\Rb^2\).
        \item Depending on the polarization (TE / TM), the optical cavity is characterized by a permittivity \( \lapc = \varepsilon \) and a permeability \( \mu = 1 \) or a permeability \( \lapc = \mu \) and a permittivity \( \varepsilon = 1 \).
              Metamaterials are commonly characterized by \( \eps < 0 \) and/or \( \mu <0 \).
              The cavity is embedded in a normalized homogeneous background characterized by \( \mu = 1 \), and \( \eps = 1 \).

        \item \Cref{eq:ScatteringProblem} includes the scattering by a plane wave.
    \end{itemize}
\end{remark}

\subsection{Main result}\label{ssec:main}

Our main goal is to establish the existence of a discrete sequence of scattering resonances close to the positive real axis, which is done in two steps.
First, we derive approximate solutions of the resonance problem \cref{eq:ResonanceProblem} called \emph{quasi-pairs}~\cite[Definition 2.1]{BalDauMoi21} (\cref{thm:quasi_resonances}); then we show that there exist true resonances close to the approximate ones (\cref{thm:resonances}), which rely on showing that the ``quasimodes to resonances'' result from the black box scattering applies for \cref{eq:ResonanceProblem}.
For ease of reading, we (re)define quasi-pairs as follows:

\begin{definition}\label{def:quasi_res}
    A \emph{quasi-pair} for the resonance problem \cref{eq:ResonanceProblem} is formed by a sequence \( {(\lu_m)}_{m \geq 1} \) of real numbers, and a sequence \( {(\uu_m)}_{m \geq 1} \) of complex valued functions that satisfy the following conditions:
    \begin{enumerate}[leftmargin=*]
        \item For any \( m \geq 1 \), the functions \( \uu_m \) are uniformly compactly supported and
              \[
                  \uu_m \in \Dc(P),
                  \quad \text{with} \quad
                  \norm{\uu_m}_{\Lr^2(\Rb^2)} = 1.
              \]

        \item We have the following quasi-pair estimate
              \begin{align}\label{eq:mtodoinfty}
                  \norm{P \uu_m - \lu_m\, \uu_m}_{\Lr^2(\Rb^2)}
                  = \OO\plr{m^{-\infty}},
                  \qquad \text{as } m \to +\infty,
              \end{align}
              with the notation \( a_m = \OO(m^{-\infty}) \) to indicate that for all \( N \in \Nb \), there exists \( C_N > 0 \) such that \( \abs{a_m} \leq C_N\, m^{-N} \), for all \( m \geq 1 \).

        \item Additionally, we say that \( \uu_m \) is localized around \( \Gamma \subset \Rb^2 \) if, for all \( \delta > 0 \), its support is mainly in \( \Gamma_\delta \coloneqq \setst{x \in \Rb^2}{\Dist(x, \Gamma) < \delta} \) neighborhood of \( \Gamma \) in the sense that
              \begin{equation}
                  \norm{\uu_m}_{\Lr^2\plr{\Gamma_\delta }}
                  = 1 - \OO\plr{m^{-\infty}},
                  \qquad \text{as } m \to +\infty.
              \end{equation}
    \end{enumerate}
    We call \( \plr{\uu_m}_{m \geq 1} \) quasi-modes, and \( \plr{\ku_m \coloneqq \sqrt{\lu_m}}_{m \geq 1} \) quasi-resonances.
\end{definition}

\begin{theorem}\label{thm:quasi_resonances}
    If \( \cavc(\gamma) \neq -1 \), for all \( \gamma \in \Gamma \), then we can construct \( \plr{\lu_m, \uu_m}_{m \geq 1} \) quasi-pairs of the resonance problem \cref{eq:ResonanceProblem}. {Moreover, we have \( \lu_m = \plr{\frac{2\pi m}{L}}^2 \, \Lambda\plr{\frac{L}{2\pi m}} \) where \( L \) is the length of the curve \( \Gamma \) and \( \Lambda \in \Cs^\infty\plr{\clr{0, \frac{L}{2\pi}}} \) (see \cref{eq:lambda}).
    The quasi-mode is of the form \( \uu_m  = \exp\plr{ \im \tfrac{2\pi m}{L} \, \Theta }\Phi \) with \( \Theta, \Phi \) smooth functions with respect to \( \frac{L}{2\pi m} \) and \( \Phi \) is exponentially decreasing on both sides of the interface \( \Gamma \) (see \cref{eq:uu})}.
    Additionally, the sign of \( \lu_m \) is given to leading order by the sign of \( 1+\restr{\cavc}{\Gamma}^{-1} \), and \( \plr{\lu_m}_{m \geq 1} \) are independent of the construction.
\end{theorem}

\begin{theorem}\label{thm:resonances}
    If \( \cavc(\gamma) \neq -1 \), for all \( \gamma \in \Gamma \), let \( \plr{\lu_m, \uu_m}_{m \geq 1} \) be the quasi-pairs of \cref{thm:quasi_resonances}.
    Then there exists a sequence of true scattering resonances \( \plr{\ell_m}_{m \geq 1} \) of \cref{eq:ResonanceProblem} close to the quasi-resonances \( \plr{\sqrt{\lu_m}}_{m \geq 1} \) in the sense that
    \[
        \ell_m^2 = \lu_m + \OO\plr{m^{-\infty}},
        \qquad \text{as } m \to +\infty.
    \]
    In addition:
    \begin{itemize}
        \item If \( \cavc(\gamma) < -1 \), for all \( \gamma \in \Gamma \), then \( \plr{\ell_m}_{m \geq 1} \) are scattering resonances with \( \Re\plr{\ell_m} > 0 \) and \( -1 \ll \Im\plr{\ell_m} < 0 \).
        \item If \( -1 < \cavc(\gamma) < 0 \), for all \( \gamma \in \Gamma \), then \(\ell_m \in \im \Rb_+ \), \( \forall m \geq 1\), and \( \plr{\ell_m^2}_{m \geq 1} \) are negative eigenvalues.
    \end{itemize}
\end{theorem}

From \cref{def:quasi_res}, recall that \( a_m = \OO(m^{-\infty}) \) indicates that for all \( N \in \Nb \), there exists \( C_N > 0 \) such that \( \abs{a_m} \leq C_N\, m^{-N} \), for all \( m \geq 1 \).
Then \cref{thm:quasi_resonances,thm:resonances} provide asymptotic estimates, which imply:
\begin{itemize}[leftmargin=*]
    \item \( \plr{\lu_m}_{m \geq 1} \) are independent of the construction in the sense that, if one has two quasi-resonances \( {(\lu_m)}_{m \geq 1}, {(\Mu_m)}_{m \geq 1} \) corresponding to the same integer \( m \), then \( \lu_m - \Mu_m = \OO(m^{-\infty}) \).
          This is demonstrated in \cref{cor:quasiUnicity}.
    \item Estimates \(  \ell_m^2 = \lu_m + \OO\plr{m^{-\infty}}\) naturally provide less accurate result for small \(m\). Some numerical illustrations will be provided in \cref{sec:back_to_sca}.
\end{itemize}
Contrary to the classical cavities (\( \cavc > 0 \)), the value of \( \cavc \) can lead to two different behaviors: from \cref{thm:quasi_resonances,thm:resonances} we only have one sequence of resonances \( \plr{\ell_m}_{m \geq 1} \) close to the positive real axis (in the \( \ell_m \) plane) in the case \( \cavc(\gamma) < -1 \) (where we built \( {(\ku_m)}_m \in \Rb^+ \)), and none in the case \( -1 < \cavc(\gamma) < 0 \) (where we obtained \( {(\ku_m)}_m \in \im \Rb \)), see~\cite{PhD_Moitier_2019, BalDauMoi21}.
From \cref{thm:resonances} one can show that the truncated resolvent explodes at the quasi-resonances, and thus scattering instabilities occur for \cref{eq:ScatteringProblem}.

\begin{corollary}\label{cor:explo_res}
    If \( \cavc(\gamma) < -1 \), for all \( \gamma \in \Gamma \), then there exists a real sequence \( \plr{\ku_m}_{m \geq 1} \) with \( \lim_{m \to +\infty} \ku_m = +\infty \) such that for all \( \chi \in \Cs_\comp^\infty(\Rb^2) \) with \( \chi \equiv 1 \) on an open neighborhood of \( \overline{\Omega} \) and for all \( N \in \Nb \), there exists a constant \( C_N > 0 \),
    \[
        \opnorm{{\chi \Rsv(\ku_m) \chi}} \geq C_N m^N,
        \qquad \forall m \geq 1.
    \]
\end{corollary}

The above results also rely on well-posedness of \cref{eq:ScatteringProblem}, and on establishing that \( P \) is a black box Hamiltonian.
This can be done using the \( \mathtt{T} \)-coercivity framework~\cite{BoChCi12, BCCC16, BoCaCi18}, allowing to compensate for the change of sign of \( \lapc \) and establishing Fredholm properties (and others) under some conditions.
\Cref{sec:resolv_results} and \Cref{app:prop_P} detail those results.
Well-posedness of \cref{eq:ScatteringProblem} in Hadamard's sense leads to the existence of a stability constant \( C(k) >0 \) such that \( \norm{u}_{\Lr^2(\Db(0,\rho))} \le C(k) (\norm{f}_{\Lr^2(\Rb^2)} + \norm{g}_{\Lr^2(\Gamma)}) \), for any open disk such that \( \overline{\Omega} \cup \supp(f) \subset \Db(0,\rho) \), see \cref{lem:Scat_pb_wellposed}.
From \cref{cor:explo_res} we deduce the following:
\begin{corollary}\label{cor:explo_stability}
    If \( \cavc(\gamma) < -1 \), for all \( \gamma \in \Gamma \), then there exists a real sequence \( \plr{\ku_m}_{m \geq 1} \) with \( \lim_{m \to +\infty} \ku_m = +\infty \) such that for all \( N \in \Nb \), there exists a constant \( C_N > 0 \),
    \[
        C(\ku_m) \geq C_N m^N,
        \qquad \forall m \geq 1.
    \]
    \Cref{eq:ScatteringProblem} suffers from scattering instabilities for \( k = \ku_m \).
\end{corollary}

The construction of the real sequence \( \plr{\lu_m}_m \) (consequently \( \plr{\ku_m}_m \)) is the fundamental element in the above results.
To illustrate how to proceed, we present a simple case in \cref{sec:pedago} where all calculations can be done explicitly, and we generalize the approach to arbitrary smooth cavities in \cref{sec:construct_quasi}.

\section{A pedagogical example}\label{sec:pedago}

In this section we consider \cref{eq:ScatteringProblem} set on a circular cavity with constant negative \( \cavc \): \( \Omega \) is a disk of radius \( R > 0 \), and \( \cavc = - \eta^2 \) with \( \eta >0 \).
Taking advantage of the geometry, we look for solution of the form:
\begin{align}\label{eq:uSol}
    u(x) = u(r,\theta) = \sum_{m \in \Zb} u_m(r, \theta) = \sum_{m \in \Zb} w_m(r)\, \e^{\im m \theta},
\end{align}
with \( (r,\theta) \in  \Rb_+ \times  \Rb / 2\pi\Zb \) the polar coordinates corresponding to the Cartesian coordinates \( x \), and \( w_m(r) = \frac{1}{2\pi} \int_{0}^{2\pi} u(r,\theta)\, \e^{-\im m \theta} \di{\theta} \), \( m \in \Zb \), the angular Fourier coefficients.
Similarly, we assume we can write \( f (x) = \sum_{m \in \Zb} f_m(r) \, \e^{\im m \theta} \), for \( x \in \Rb^2 \) with \( f_m \in \Lr_\comp^2(\Rb) \), and we can write \( g(x) = \sum_{m \in \Zb} g_m \, \e^{\im m \theta} \), for \( x \in \Gamma \) with \( g \in \Lr^{2}(\Gamma) \).

\begin{remark}\label{rem:scat_uinc}
    An example where \cref{eq:ScatteringProblem} naturally arises is the scattering by a transparent obstacle of a plane wave.
    If one considers \( u^\inc(x_1, x_2) = \e^{\im k x_2} \), with wavenumber \( k \) and direction \( {(0,1)}^\intercal \), then \cref{eq:ScatteringProblem} is satisfied by the scattered field \( u^\sca \coloneqq u- u^\inc \)  with data \( f^\inc \coloneqq \Div\plr{ \lapc^{-1} \, \nabla u^\inc} + k^2 u^\inc \) and \( g^\inc \coloneqq -\clr{\lapc^{-1} \, \partial_n u^\inc}_\Gamma \).
    Additionally, one can check that \( f_m \) is supported only in the cavity: \( f_m(r) =  k^2 ({1 - \cavc^{-1}})\bJ_m(k \, r) \), \( r \in (0,R) \), where \( \bJ_m \) denotes the Bessel function of the first kind of order \( m \).
    This expansion is obtained using the Jacobi-Anger expansion of \( u^\inc \)~\cite[Eq.\ 10.12.1]{Nist} that converges absolutely on every compact set of \( \Rb^2 \).
\end{remark}

Plugging \cref{eq:uSol} in \cref{eq:ScatteringProblem}, we obtain a family of 1D problems indexed by \( m \in \Zb \): Find \( w_m \in \Hr_\loc^1(\Rb_+,r\di{r}) \) such that
\begin{align}
    \begin{dcases}
        -\frac{1}{r} \partial_r\plr{r \, \partial_r w_m} + \dfrac{m^2}{r^2} w_m - \cavc k^2\, w_m =  {\cavc} f_m
         & \text{in } (0,R)
        \\
        -\frac{1}{r} \partial_r\plr{r \, \partial_r w_m} + \dfrac{m^2}{r^2} w_m - k^2\, w_m = f_m
         & \text{in } (R, +\infty)
        \\
        \clr{w_m}_\Gamma = 0, \quad \clr{\lapc^{-1} \, {w_m}'}_{\{R\}} = g_m
         & \text{across } \{R\}
        \\
        {w_0}'(0) = 0 \quad \text{or} \quad w_m(0) = 0\ \text{for}\ m \neq 0
         & \text{on } \{0\}
        \\
        w_m(r) \propto \Hu_m(k r)
         & r > R
    \end{dcases}\label{eq:wscEq}
\end{align}
with \( \propto \) meaning ``up to a constant''.
For \( m \neq 0 \), the term \( \frac{m^2}{r^2} w_m \) imposes a homogeneous Dirichlet boundary condition at zero~\cite{BerDau99}.
The solution is continuous at \( r = 0 \), using the outgoing wave condition we write
\begin{align}\label{eq:wscSol}
    w_m(r) = \begin{dcases}
                 \alpha_m \frac{\bI_m(\eta \, k\, r)}{\bI_m(\eta \, k\, R)} + f_{\cavc}(r),
                  & \text{if } r \le R,
                 \\
                 \beta_m \frac{\Hu_m(k \, r)}{\Hu_m(k \, R)} + f_R(r),
                  & \text{if } r > R,
             \end{dcases}
\end{align}
with \( \bI_m \) denoting the modified Bessel function of the first kind of order \( m \), and \( f_{\cavc}, f_R \) denoting particular solutions.
Our goal in this section is to investigate the associated operator (in particular the resolvent operator), therefore we do not need to write the particular solutions explicitly.
Above, the coefficients \( (\alpha_m,\beta_m) \) are solution of
\begin{equation}\label{eq:transmissionSystem}
    A_m^\eta(k R) \begin{pmatrix}
        \alpha_m \\ \beta_m
    \end{pmatrix}
    = \begin{pmatrix}
        f_{\cavc}(R) -f_{R}(R) \\
        g_m +  {\cavc}^{-1}f'_{\cavc}(R) -f'_{R}(R)
    \end{pmatrix}
    ,\ A_m^\eta(z) = \begin{pmatrix}
        1
         & -1
        \\
        -\frac{1}{\eta} \frac{\bI_m'(\eta \, z)}{\bI_m(\eta \, z)}
         & -\frac{{\Hu_m}'(z)}{\Hu_m(z)}
    \end{pmatrix}.
\end{equation}
The above system comes from the transmission conditions at \( r = R \).

\begin{remark}
    Since \( k >0 \) and the problem is well-posed for \( \eta \neq 1 \) (see \cref{lem:Scat_pb_wellposed}), coefficients \( (\alpha_m,\beta_m) \) are uniquely defined and \( \det(A_m^\eta(k R))  \neq 0 \), with
    \begin{equation} \label{eq:detM}
        \det(A_m^\eta(z))
        \coloneqq -\eta^{-1} \, \frac{\bI_m'(\eta \, z)}{\bI_m(\eta \, z)}
        - \frac{{\Hu_m}'(z)}{\Hu_m(z)},
        \quad \forall z \in \Cb^*.
    \end{equation}
\end{remark}

Now that we have an explicit expression of \( \plr{A_m^\eta(k R)}_{m \in \Zb} \), we can analyze its behavior for various wavenumbers \( k \) and values of \( \cavc \) (namely \( \eta \)).
For numerical purposes, we truncate \cref{eq:uSol} to order \( M \), leading to consider the sequence of operators \( \plr{A_{-M}^\eta(k R), \ldots, A_0^\eta(k R), \ldots, A_M^\eta(k R)} \).
We choose here \( M = 32 \) and \( R = 1 \).
The resolvent of this spectral numerical scheme is \( \Ab_k^{-1} \) where
\begin{equation}
    \Ab_k \coloneqq \Diag\plr{A_{-M}^\eta(k R), \ldots, A_0^\eta(k R), \ldots, A_M^\eta(k R)}.
\end{equation}
To look at the stability of this scheme, we look at the spectral norm of \( \Ab_k^{-1} \) noted \( \opnorm{\Ab_k^{-1}}_2 \).
\Cref{fig:diskResponse} represents the log plot of \( \opnorm{\Ab_k^{-1}}_2 \) with respect to \( k \), for various values of \( \cavc \).
One observes that there exists a sequence \( {(k_m)}_m \) such that \( \opnorm{\Ab_{k_m}^{-1}}_2 \) peaks when \( \cavc \in (-\infty, -1) \), while \( \opnorm{\Ab_k^{-1}}_2 \) remains bounded when \( \cavc \in (-1, 0) \).
In the first case, the sequence \( \plr{\opnorm{\Ab_{k_m}^{-1}}_2}_{m \ge 1} \) grows exponentially~\cite{MoiSpe2019,HipMoiSpence21}.
We refer to those peaks as \emph{scattering instabilities}.

\begin{figure}[!hbtp]
    \centering
    \subfloat[\( \cavc \in \{-1.3,-1.2,-1.1\} \)]{\includegraphics{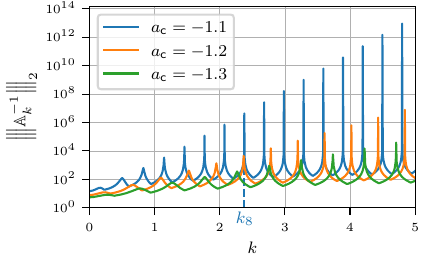}}\hspace{1em}
    \subfloat[\( \cavc \in \{-0.9,-0.8,-0.7\} \)]{\includegraphics{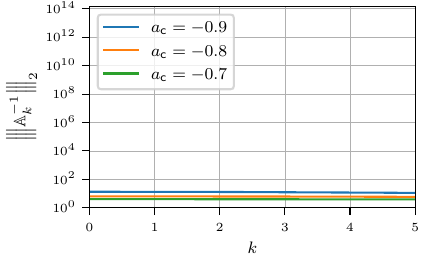}}
    \caption{Semi-log plot of the function \( k \mapsto \opnorm{\Ab_k^{-1}}_2 \) with respect to \( k > 0 \) for \( \cavc \in \{-1.3,-1.2,-1.1\} \) (left), for \( \cavc \in \{-0.9,-0.8,-0.7\} \) (right).
        The value \( k_8 \) marked on the graph corresponds to the reference value used in \cref{fig:disk_mode_-1.1,fig:plotPlasmon}.
    }\label{fig:diskResponse}
\end{figure}

The above results provide the following:
\begin{itemize}\item While \cref{eq:ScatteringProblem} is well-posed for all \( k >0 \), the associated resolvent operator explodes for a sequence of wavenumbers \( \plr{k_m}_{m \geq 1} \).
    \item This phenomenon occurs only for \( \cavc < -1 \).
\end{itemize}
In what follows we investigate the associated spectral problem to identify the resonances causing the scattering instabilities.
We then use semi-classical analysis to characterize the sequence \( \plr{k_m}_{m \geq 1} \), and study their relationship to surface plasmons.

\subsection{Scattering resonances for the disk}\label{sec:DiskRes}

As done in the previous section, \cref{eq:ResonanceProblem} set on a disk can be rewritten as a family of one-dimensional problems indexed by \( m \in \Zb \): Find \( (\ell, w_m) \in \Cb \setminus \Rb_- \times \Hr^1_\loc(\Rb_+, r\di{r}) \setminus \{0\} \), such that
\begin{equation}\label{eq:eigenField_m}
    \begin{dcases}
        -\frac{1}{r} \partial_r\plr{r \, \partial_r w_m} + \dfrac{m^2}{r^2} w_m - \cavc \ell^2 \, w_m = 0
         & \text{in } (0,R)
        \\
        -\frac{1}{r} \partial_r\plr{r \, \partial_r w_m} + \dfrac{m^2}{r^2} w_m - \ell^2 \, w_m = 0
         & \text{in } (R, +\infty)
        \\
        \clr{w_m}_\Gamma = 0, \quad \clr{\lapc^{-1} \, w_m'}_{\{R\}} = 0
         & \text{across } \{R\}
        \\
        {w_0}'(0) = 0 \quad \text{or} \quad w_m(0) = 0\ \text{for}\ m \neq 0
         & \text{on } \{0\}
        \\
        w_m(r) \propto \Hu_m(\ell  r)
         & r > R
    \end{dcases}
\end{equation}
Similarly, we write
\begin{align}\label{eq:wmSol}
    w_m(r) = \begin{dcases}
                 \alpha_m \frac{\bI_m(\eta \, \ell \, r)}{\bI_m(\eta \, \ell \, R)},
                  & \text{if } r \le R,
                 \\
                 \beta_m \frac{\Hu_m(\ell \, r)}{\Hu_m(\ell \, R)},
                  & \text{if } r > R,
             \end{dcases}
\end{align}
however this time, the pair \( (\ell, w_m) \) is solution of \cref{eq:eigenField_m} if, and only if, there exists \( {(\alpha_m, \beta_m)}^\intercal \in \ker\plr{A_m^\eta(\ell R)} \setminus {(0, 0)}^\intercal \), with \( A_m^\eta(\ell R) \) defined in \cref{eq:transmissionSystem}.
Given \( m \in \Zb \), and using \cref{eq:detM}, we define the set of resonances
\begin{equation}\label{eq:Recav_m}
    \Rc[\cavc, R](m) = \setst{\ell \in \Cb \setminus \Rb_-}{\det\plr{A_m^\eta(\ell R)} = 0}.
\end{equation}
Finally, we define the set of resonances of Problem \cref{eq:ResonanceProblem}
\begin{equation}\label{eq:Recav}
    \Rc[\cavc, R] \coloneqq \bigcup_{m \in \Zb} \Rc[\cavc,R](m).
\end{equation}

\begin{remark}
    Given \( \ell \in \Rc[\cavc, R](m) \), one finds \( \alpha_m = c \) and \( \beta_m = c \) with \( c \in \Cb^* \) since the resonant modes are defined up to some normalization.
\end{remark}

\begin{remark}\label{rem:sym_m}
    Since \( \bI_{-m} = \bI_m \) and \( \Hu_{-m} = {(-1)}^m \Hu_m \), for all \( m \in \Zb \), see~\cite[Eq.\ 10.27.1 and 10.4.2]{Nist}, by symmetry all the resonances \( \ell \), corresponding to \( m \neq 0 \), are of multiplicity \( 2 \), and the two associated modes are conjugate, given by \( u_m(r, \theta) \coloneqq w_m(r) \, \e^{ \pm \im m \theta} \).
    It turns out  \( \Rc[\cavc, R] = \bigcup_{m \in \Nb} \Rc[\cavc, R](m) \).
\end{remark}

The resonances set \( {(\Rc[\cavc, R](m))}_m \) defined in \cref{eq:Recav_m} cannot be computed analytically, however one can use contour integration techniques {on \cref{eq:detM}} to compute a subset \( \Rc_N[\cavc, R] \coloneqq \bigcup_{m = 0}^N \Rc[\cavc, R](m) \subset  \Rc[\cavc, R] \) (see~\cite{KraVan00, cxroots}).
\Cref{fig:diskCxroots} represents the set \( \Rc_N[\cavc, 1] \), for the unit disk and for various permittivities \( \cavc \).
The color bar indicates the value of \( m \).

In classical cavities (\( \cavc > 0 \)), resonances of \cref{eq:ResonanceProblem} are split into two categories (at least for \( \cavc > 1 \)~\cite{BalDauMoi21}): inner resonances \( \Rc_\mathsf{inn}[\cavc, R] \) associated to resonant modes essentially supported inside the cavity \( \Omega \), and outer resonances \( \Rc_\out[\cavc,R] \) associated to resonant modes essentially supported in the exterior of the cavity \( \Rb^2 \setminus \overline{\Omega} \).
The inner resonance category includes the so-called \emph{Whispering Gallery Modes} (WGM), associated to resonances \( \ell_{\wgm} \) such that \( -1 \ll \Im (\ell_\wgm) < 0 \)~\cite{ChoKim10,BalDauMoi21}.
In particular the approximation of \cref{eq:ScatteringProblem} can be deteriorated if one chooses \( k  = \Re (\ell_\wgm) \), where those modes can be excited~\cite[Section 6.2]{MoiSpe2019}.
When \( \cavc < 0 \) we split the resonances into three categories.
From \cref{fig:diskCxroots,fig:disk_mode_-1.1,fig:disk_mode_-1.1,fig:disk_mode_-0.9}, we conclude:
\begin{itemize}[leftmargin=*]
    \item The main family of interest represented by `\(+\)' in \cref{fig:diskCxroots} are associated to resonant modes essentially supported on the interface \( \Gamma \) (see \cref{fig:disk_mode_-0.9,fig:plotPlasmon} for an example).
          We refer to those modes as surface plasmons waves (SPW), and we call this family the interface resonances \( \Rc_\pla[\cavc, 1] \).
          We denote the interface resonances \( \plr{\ell_m}_{m \geq 1} \) so that \( \Rc_\pla[\cavc, 1] = \setst{\ell_m}{m \in \Nb^*} \).
          Observe that the interface resonances' nature changes depending on \(\cavc\): if \( \cavc < -1 \), then \(\ell_m\) is a resonance close to the positive real axis with \( \Re\plr{\ell_m} > 0 \) and \( -1 \ll \Im\plr{\ell_m} < 0 \) (in the \(\ell_m\) plane); if \( -1 < {\cavc} < 0 \), then \( \ell_m \in \im\Rb_+ \) so \( \ell_m^2 \) is a negative eigenvalue.
    \item The outer resonances \( \ell_\out \in \Rc_\out[\cavc, 1] \) (\(\ell_\out\) are represented as `\(\bullet\)' in \cref{fig:diskCxroots}) are resonances with a negative imaginary part (in the \(\ell_\out\) plane). The outer resonant modes are essentially supported outside the cavity (see \cref{fig:disk_mode_-0.9,fig:disk_mode_-1.1} for an example).
    \item The last family (represented as `\(\times\)' in \cref{fig:diskCxroots}) corresponds to pure imaginary eigenvalues of the operator \( P \) on \( \Lr^2(\Rb^2) \) (consequently \( \ell^2 \in \Rb_- \)).  The associated modes are essentially supported inside the cavity (like inner resonant modes, see~\cref{fig:disk_mode_-0.9,fig:disk_mode_-1.1} for an example). They contain Whispering Gallery Modes. Because of their particular nature, they are sometimes called bound states \cite[Chapter 1]{DyaZwo2019}, and we denote them \( \ell_\bds \in \Rc_\bds[\cavc, 1] \) (consequently \( \ell^2_\bds \in \Rb_-\)).
\end{itemize}
\begin{figure}[!hbtp]
    \centering
    \subfloat[\( \cavc = -1.1 \)]{\includegraphics{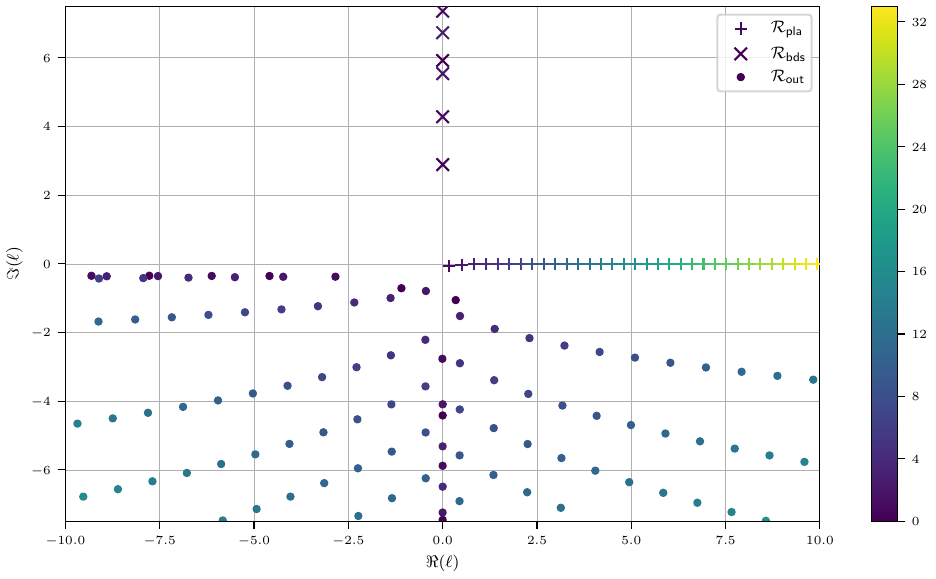}}\\
    \subfloat[\( \cavc = -0.9 \)]{\includegraphics{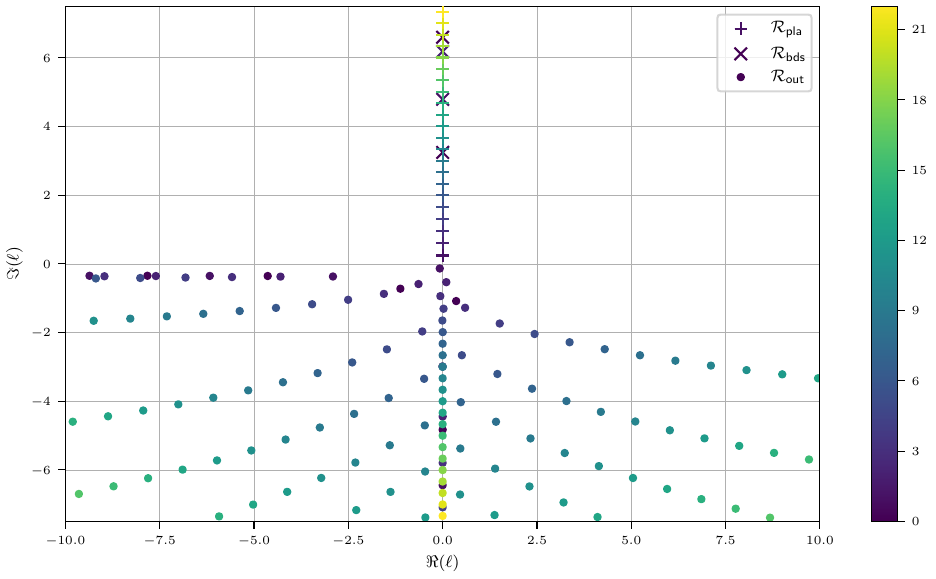}}
    \caption{Graph of the sets \( \Rc_{33}[-1.1, 1] \) (top) and \( \Rc_{22}[-0.9, 1] \) (bottom) in the complex plane \( (\Re(\ell), \Im(\ell)) \) with \( \ell \in \Cb \setminus \Rb_-\).
        Those sets are computed using complex contour integration~\cite{cxroots} on the analytic function \cref{eq:detM}. }\label{fig:diskCxroots}
\end{figure}

\begin{figure}[!hbtp]
    \centering
    \subfloat[\( \ell_8 \approx 2.377 - \im 4.194 \cdot 10^{-6} \)]{\includegraphics{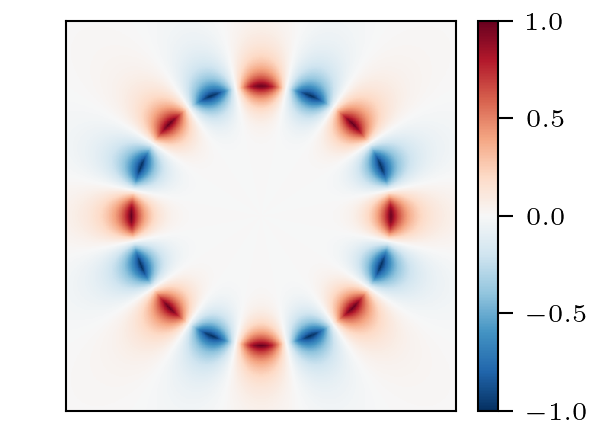}}
    \subfloat[\( \ell_\bds \approx \im 19.330  \)]{\includegraphics{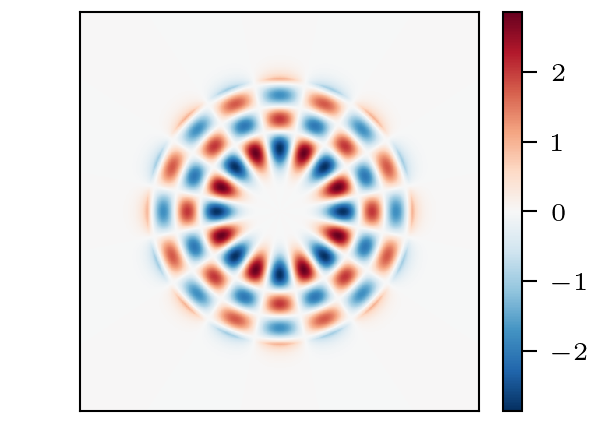}}
    \subfloat[\( \ell_\out \approx 3.174 - \im 4.129 \)]{\includegraphics{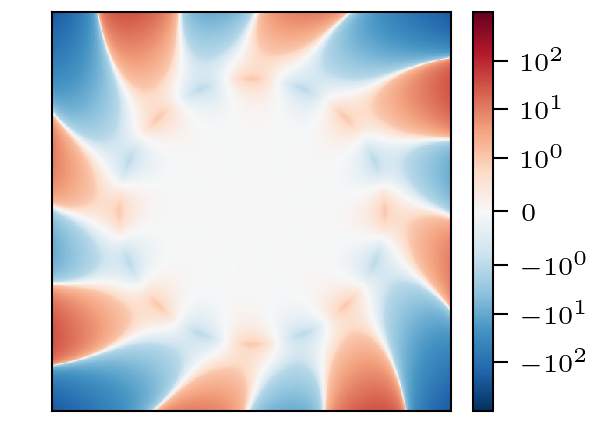}}
    \caption{Real part of some resonant modes \( u_8(r,\theta) \) for \( \cavc = -1.1 \) with their corresponding resonances below.}\label{fig:disk_mode_-1.1}
\end{figure}

\begin{figure}[!hbtp]
    \centering
    \subfloat[\( \ell_8 \approx \im 2.663 \)]{\includegraphics{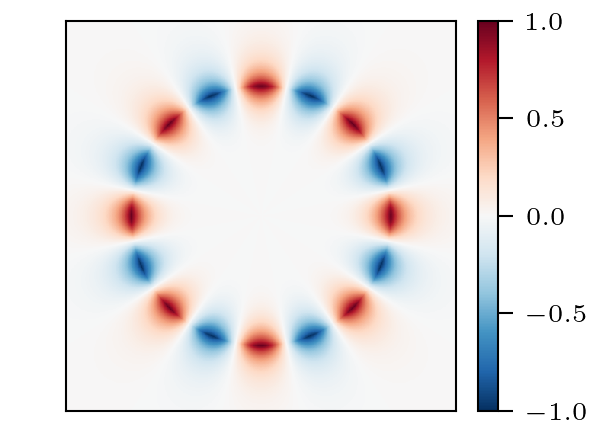}}
    \subfloat[\( \ell_\bds \approx \im 17.718 \)]{\includegraphics{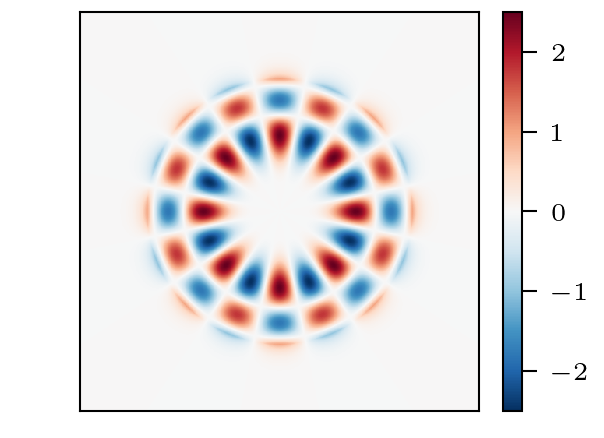}}
    \subfloat[\( \ell_\out \approx 5.229 - \im 2.664 \)]{\includegraphics{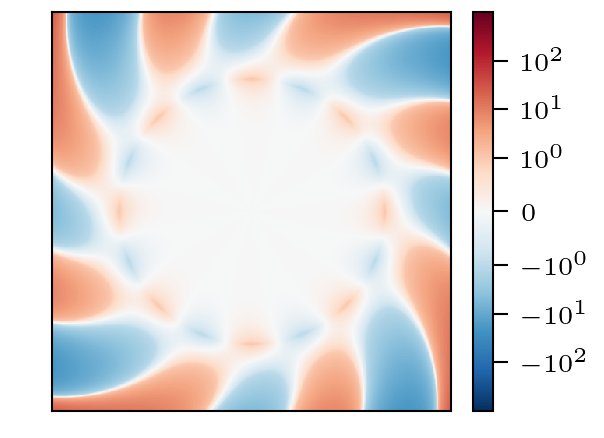}}
    \caption{Real part of some resonant modes \( u_8(r,\theta) \) for \( \cavc = -0.9 \) with their corresponding resonances below.}\label{fig:disk_mode_-0.9}
\end{figure}
\begin{figure}[!hbtp]
    \centering
    \subfloat[\( \cavc = -1.1 \)]{\includegraphics{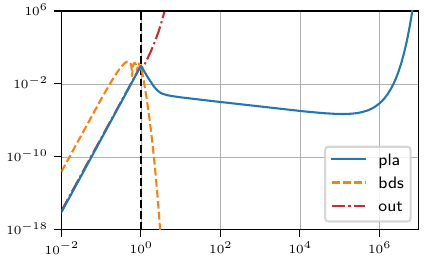}}
    \hspace{1em}
    \subfloat[\( \cavc = -0.9 \)]{\includegraphics{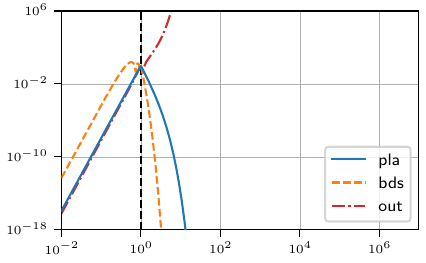}}

    \caption{Log-Log plots of the radial component \( r \mapsto w_8(r) \) of the three types of resonances shown in \cref{fig:disk_mode_-1.1,fig:disk_mode_-0.9} for \( \cavc = -1.1 \) (left) and \( \cavc = -0.9 \) (right).}\label{fig:plotPlasmon}
\end{figure}

In the end, we write \( \Rc[\cavc, R] = \Rc_\out[\cavc, R] \cup \Rc_\bds[\cavc, R] \cup \Rc_\pla[\cavc, R] \).
As mentioned before, the interface resonances are quite peculiar as their nature changes depending on \( \cavc \).
As illustrated in \cref{fig:diskCxroots}, they correspond to complex resonances such that \( \Re\plr{\ell_m} > 0 \) and \( \Im\plr{\ell_m} < 0 \) in the \(\ell_m\) plane when \( \cavc < -1 \), while they are pure imaginary eigenvalues \( \ell_m \in \im \Rb_+\) when \( -1 < \cavc < 0 \).
For the first case, one observes that \(\Re(\ell_m)\) diverges towards \( +\infty \) as \( m \to \infty \), and their negative imaginary part \( \Im(\ell_m) < 0 \) tends to \( 0 \) exponentially fast as \( m \to \infty \).
Additionally, a closer observation gives us that \( \Re (\ell_m) \propto m \).
\Cref{fig:plotPlasmon} represents the behavior of \( w_8 \) for the three types of resonances far from the boundary for \( \cavc \in \{ -1.1, -0.9 \} \).
As discussed above, the support of the bound states and outer resonant modes is mainly inside and outside the cavity, respectively.
The modes associated to interface resonances are locally exponentially decreasing moving away from the interface, which is the mathematical characterization of surface plasmons~\cite{maier2007plasmonics, BCCC16}.
In the next section, we characterize to leading order these interface resonances family \( \plr{\ell_m}_{m \ge 1} \) by performing asymptotic expansion as \( m \to \infty \).
In particular, we will confirm that \( \Re (\ell_m) \propto m \).

\begin{remark}\label{rem:ll2}
    As seen above, it is convenient to identify the change of behavior of the interface resonances using the sign of \( \Re (\ell^2_m) \).
    In what follows we provide asymptotic expansions of \( {(\ell^2_m)}_{m \ge 1} \) instead of the resonances \( {(\ell_m)}_{m \ge 1} \).
\end{remark}

\begin{remark}\label{rem:cx_res}
    Going back to the \cref{eq:ScatteringProblem}, it turns out that the dashed blue line in \cref{fig:diskResponse} corresponds to the real part an interface resonance: \( k_8 = \Re\plr{\ell_8} \approx 2.377 \), and \( \ell_8 \in \Rc_\pla[-1.1, 1] \).
    Additionally, given data associated to \( k > 0 \), the interface modes associated to \( \ell \in \Rc_\pla[-0.9, 1] \) (in other words \( \Re\plr{\ell^2} < 0 \)) cannot be excited as illustrated in \cref{fig:diskResponse}.
    One can also perform the same computations for a lossy circular cavity.
    In that case the interface resonances plunge further into the complex plane (their imaginary part gets more significant in absolute value, moving the resonances away from the real axis).
    Excitation of those resonances is then more difficult to observe.
\end{remark}

\subsection{Interpretation with Schrödinger operator for the disk}\label{sec:heuristicDisk}

From \cref{sec:DiskRes} we found that plasmonic resonances \( {(\ell_m)}_m \) are such that \( \Re (\ell^2_m) \) changes {sign} depending on \( \cavc \) (\emph{i.e.}\ \( \eta \)).
In this section we use asymptotic expansions to explain this change of behavior at leading order.
To do so, we provide an analogy with the Schrödinger operator.
We define \( \ls = m^{-2}\, \ell^2 \), and we rewrite Problem \cref{eq:eigenField_m} as
\begin{equation}\label{eq:eigenField_m_schro}
    \begin{dcases}
        - m^{-2}\frac{1}{r} \partial_r \plr{r\, \partial_r w^\pm_m} + \dfrac{1}{r^2} w^\pm_m = \lapc(r) \, \ls w^\pm_m
         & \text{in } (0,R) \cup (R,+\infty)
        \\
        w^-_m(R) = w^+_m(R)\ \text{and}\ -\eta^{-2}\, \partial_r w^-_m(R) = \partial_r w^+_m(R)
         & \text{across } \{R\}
        \\
        {w_0^-}'(0) = 0\ \text{and}\ w^+_m \in \Ss([R, +\infty))
    \end{dcases}
\end{equation}
with \( \ls \) the new spectral parameter, \( w_m^\pm \) restrictions of \( w_m \) in each material, and \( \Ss(\Rb_+) \) denoting the Schwartz space.
We replace the outgoing wave condition by the requirement that \( w_m^+ \) belongs to the Schwartz space in order to characterize exponentially decreasing behaviors from both sides close to the interface (\emph{i.e.}\ surface plasmons).
To identify this behavior, first we rescale the problem \cref{eq:eigenField_m_schro} by \( \xi = r / R -1 \) such that \( r = R \) corresponds to \( \xi = 0 \).
We then define \( v^\pm_m(\xi) = w^\pm_m(R\, (1+\xi)) \),
satisfying in particular
\[
    -m^{-2}\, \Ls v^\pm_m + V v^\pm_m = a(\xi) \, R^2 \ls \, v^\pm_m
    \qquad \text{in } (-1, 0) \text{ and } (0, +\infty),
\]
where \( \Ls(\xi,\partial_\xi) = \frac{1}{1+\xi} \partial_\xi ((1+\xi)\, \partial_\xi) \) is a positive elliptic operator (Laplacian like) and \( V(\xi) = \frac{1}{{(1+\xi)}^2} \) is a potential.
In that sense, the operator \( v \mapsto  (-m^{-2}\, \Ls  + V ) v \) can be interpreted as a Schrödinger operator.
To construct localized modes at the interface, we consider the principal part of \( - m^{-2} \Ls + V \) with its coefficients frozen at \( \xi = 0 \), corresponding to \( -m^{-2}\partial_\xi^2 + 1 \).
It is then natural to rescale by \( \rho = m \xi \), and the leading order behavior becomes
\begin{equation}\label{eq:eigenField_m_schroScale}
    \begin{dcases}
        -\partial_\rho^2 \vp^- + \vp^- = -\eta^2\, R^2\ls \, \vp^-
         & \text{in}\ (-\infty,  0)
        \\
        -\partial_\rho^2 \vp^+ + \vp^+ = R^2\ls \, \vp^+
         & \text{in}\ (0, +\infty)
        \\
        \vp^-(0) = \vp^+(0)\ \text{and}\ \eta^{-2}\, \partial_\rho \vp^-(0) = \partial_\rho \vp^+(0)
         & \text{across}\ \{0\},
        \\
        \vp^\pm \in \Ss(\Rb_\pm)
    \end{dcases}
\end{equation}
with \( \vp^\pm(\rho) = v^\pm_m(\xi) \).
Note that the condition \( v^-_m(-1) = \vp^-(-m) = 0 \) becomes \( \vp^- \in \Ss(\Rb_-) \) to keep a localized behavior as \( m \to +\infty \).
Solutions of \cref{eq:eigenField_m_schroScale} are given by \( (\ls, \vp^\pm)  = ( R^{-2}(1 - \eta^{-2}), \e^{ -\eta^{\mp 1}\, |\rho|}) \), where the modes are exponentially decreasing on both sides of the interface \( \rho = 0 \).
Back to \cref{eq:eigenField_m_schro}, we have found a pair \( (\lu_m, \underline{w}_m^\pm) \) characterizing \( (\ell^2, w_m^\pm) \), with the leading behavior given by
\begin{equation}\label{eq:wm_asy}
    \lu_m = \frac{m^2}{R^2} \plr{1-\eta^{-2}} + \OO\plr{m} ,
    \quad \text{and} \quad
    \underline{w}_m^\pm(r) = \exp\plr{-\eta^{\mp 1} m\abs{\frac{r}{R}-1}}  + \OO\plr{m^{-1}}.
\end{equation}
We conclude:
\begin{itemize}\item when \( \cavc < -1 \), or \( \eta > 1 \), surface plasmons waves are associated to scattering resonances with \( \Re\plr{\ell} > 0 \) (at first order);
    \item when \( -1 < \cavc < 0 \), or \( 0 < \eta < 1 \), surface plasmons waves are associated to negative eigenvalues with \( \ell  \in \im \Rb_+ \) (at first order).
\end{itemize}
We have then asymptotically characterized SPW by building pairs \( {(\lu_m, \underline{w}_m)}_{m \ge 1} \).
Upon proper justification that \( \underline{w}_m(r) e^{\im m \theta} \in  \Dc(P) \) and that \( k  = \ku_m \coloneqq \sqrt{\lu_m} \) affects the resolvent, the obtained results match the observed behaviors in previous sections, and provide accurate predictions.

The case of the circular cavity with constant \( \cavc \) is quite intuitive, and the leading order computations can be done explicitly.
In the next sections we generalize the approach, to any order, for the general case (arbitrary shaped smooth boundary, and varying coefficients \( \cavc \in \Cs^\infty(\overline{\Omega}; (-\infty, 0) \setminus \{-1\}) \)), and justify the connection between the formal expansions (\Cref{sec:construct_quasi}) and the resolvent operator (as well as the scattering instabilities, consequently) (\Cref{sec:resolv_results}).
To that aim, we will use semi-classical WKB (Wentzel-Kramers-Brillouin) expansions along the interface and matched asymptotic expansions in the transverse direction to the interface in a tubular neighborhood of the interface.
The higher order terms allow to show a super-algebraic behavior of the peaks seen in \cref{fig:diskResponse}, explaining the exponential increase asymptotically.

\begin{remark}\label{rem:trapping}
    The circular cavity allows to clearly separate the resonant modes into three categories, where
    the support is clearly identified.
    Due to this clear separation, we say that this is a \textit{non-trapping} cavity.
    A trapping cavity (typically a crescent shape) may allow a combination of localized interface modes with localized outer modes in region encapsulated by the cavity.
    In the latter, the proposed asymptotic approach may not include the combined modes (the proposed specific scaling above is only adapted for localized plasmonic behaviors).
    For simplicity, all numerical examples will consider non-trapping cavities, that illustrate the fact that scattering resonances exhibit localized behaviors associated to SPW.
\end{remark}

\section{Quasi-pair for unbounded transmission problems with sign-changing coefficient}\label{sec:construct_quasi}

In this section we prove \cref{thm:quasi_resonances} which consists of constructing approximate solutions of the resonance problem \cref{eq:ResonanceProblem}.
Those solutions are called quasi-pairs, in the sense of \cref{def:quasi_res}.
The proof is organized as follows:
\begin{itemize}[leftmargin=*]
    \item We define a tubular neighborhood where we set up the problem, and we define formal expansions (\cref{sec:expan_setup}).

    \item We compute the expansion terms by solving a family of problems indexed by the order of the expansions (\cref{sec:computation_terms}).

    \item We show that the obtained expansions are quasi-pairs in the sense of \cref{def:quasi_res} (\cref{lem:quasiEstimation}), and that the quasi-resonances are independent of the construction (\cref{cor:quasiUnicity}).
          Details are given in \cref{sec:thm:quasi_resonances}.
\end{itemize}
We end \cref{sec:construct_quasi} with comments on the first expansion terms of \( \plr{\lu_m}_{m \geq 1} \).

\subsection{Formal expansion setup}\label{sec:expan_setup}

Recall that \( \Omega \subset \Rb^2 \) is a cavity with smooth boundary \( \Gamma \), see \cref{sec:math_settings}.
Let \( L \) be the length of \( \Gamma \), and \( \eta \coloneqq \sqrt{-\cavc} \) a positive smooth function up to the interface so that we have \( \cavc = -\eta^2 \).
We define a tubular neighborhood \( \Vc_\delta \) of the interface \( \Gamma \).
Let \( \gamma \colon \Tb_L \to \Gamma \) be a counterclockwise curvilinear parametrization of the curve \( \Gamma \) with the notation \( \Tb_L \coloneqq \Rb / L\Zb \).
Let \( n = {(\gamma_2', -\gamma_1')}^\intercal \) be the unit exterior normal to \( \Omega \) and \( \kappa = \det\plr{\gamma', \gamma''} \colon \Tb_L \to \Rb \) be the signed curvature.
We define the open tubular neighborhood, see~\cite{Moon1988}, by
\begin{equation}\label{eq:tubularNeighborhood}
    \Vc_\delta \coloneqq \setst{\gamma(s)+\xi n(s)}{(s,\xi) \in \Tb_L \times (-\delta,\delta)}
\end{equation}
which is schematically represented in \cref{fig:tubularNeighborhood}.

\begin{figure}[!htbp]
    \centering

    \caption{Tubular neighborhood and notations: \( s \) denotes an arc-length parametrization of the curve \( \gamma \), and \( \xi \) is the normal variable.}\label{fig:tubularNeighborhood}
\end{figure}

We now consider the problem:
\begin{align}\label{eq:probX}
    \begin{dcases}
        P u = \lambda u
         & \text{in}\ \Omega \cap \Vc_\delta \ \text{and}\ \plr{\Rb^2 \setminus \overline{\Omega}} \cap \Vc_\delta
        \\
        \clr{u}_\Gamma = 0 \ \text{and}\ \clr{\lapc^{-1}\, \partial_n u}_\Gamma = 0
         & \text{across}\ \Gamma, \quad
        \\
        u = 0
         & \text{on}\ \partial\Vc_\delta
    \end{dcases}
\end{align}
where \( P = -\Div(\lapc^{-1}\, \nabla) \) with \( \lapc \) defined in \cref{eq:fct_lap}.
By \cref{def:quasi_res}, the quasi-pairs are compactly supported therefore the outgoing condition does not play a role in their construction.
We replace in particular the outgoing wave condition by a homogeneous Dirichlet boundary condition in order to construct localized quasi-pairs.

The change of variables from the tubular coordinates \( (s,\xi) \in \Tb_L \times (-\delta,\delta) \) to the Cartesian coordinates \( x \in \Vc_\delta \) is a smooth diffeomorphism for \( 0 < \delta < {(\max_{\Tb_L} |\kappa|)}^{-1} \).
In this tubular coordinate system the operator \( P \) becomes
\begin{equation}\label{eq:opLtubular}
    P = -g^{-1}\, \Div_{s,\xi}\plr{\lapc^{-1}\, G\, \nabla_{s,\xi}}
\end{equation}
where \( g(s,\xi) = 1 + \xi \kappa(s) > 0 \) and
\( G(s,\xi) = \begin{pmatrix}
    {g(s,\xi)}^{-1} & 0        \\
    0               & g(s,\xi)
\end{pmatrix} \).

\bigskip

For the general case, we use a WKB (Wentzel-Kramers-Brillouin) framework~\cite{BabBul61} in order to provide an asymptotic expansion of the spectral parameter as the number of oscillations along the interface \( \Gamma \), denoted \( m \) in \cref{sec:heuristicDisk}, goes to infinity.
We introduce a small parameter \( h > 0 \) (later to be linked to \( m \)) and the ansatz for the quasi-pair \( (\lambda, u) \):
\begin{align}\label{eq:WBK_subs}
    u(s,\xi) = w(s,\xi)\, \exp\plr{\tfrac{\im}{h} \, \theta(s)}
    \quad \text{and} \quad \lambda = h^{-2}\, \ls
\end{align}
where \( \frac{1}{h} \theta \colon [0, L] \to \Cb \) is the fast phase along the interface, \( w \colon \Tb_L \times (-\delta, \delta) \to \Cb \) is the slow amplitude, and \( \ls \in \Cb \) is the spectral parameter.
In order for the function \( u \) in \cref{eq:WBK_subs} to be a smooth function in \( \Vc_\delta \setminus \Gamma \), we need to add the constraint that the function \( s \mapsto \e^{\frac{\im}{h}\theta(s)} \in \Cs^\infty(\Tb_L) \).
The phase function is chosen to be complex to simplify the computations, however we can always put the imaginary part into the amplitude \( w \).
Following~\cite{BabBul61}, we formally expand the unknowns \( w \), \( \theta \), and \( \ls \) with respect to \( h \) as
\begin{align}\label{eq:WKB_expan}
    w(s,\xi) = \sum_{n \ge 0} w_n(s,\xi)\ h^n, \quad
    \theta(s) = \sum_{n \ge 0} \theta_n(s)\ h^n, \quad
    \text{and} \quad
    \ls = \sum_{n \ge 0} \ls_n\ h^n.
\end{align}
The system \cref{eq:probX} with the new unknowns \cref{eq:WBK_subs} becomes
\begin{equation}\label{eq:prob_WKB}
    \begin{dcases}
        \Lc_h[\lapc](w, \theta) = \ls \, w
         & \text{in}\ \Tb_L \times \clr{(-\delta, 0) \cup (0, \delta)}
        \\
        \clr{w}_{\Tb_L \times \{0\}} = 0\ \text{and}\ \clr{\lapc^{-1}\, \partial_\xi w}_{\Tb_L \times \{0\}} = 0
         & \text{across } \Tb_L \times \{0\}
        \\
        w = 0
         & \text{on } \Tb_L \times \{-\delta, \delta \}
    \end{dcases}.
\end{equation}
Above, \( \Lc_h[\lapc](w, \theta) = h^2\, \e^{-\frac{\im}{h}\theta}\, P\plr{w\, \e^{\frac{\im}{h}\theta}} \), and it can be decomposed as
\begin{equation}\label{eq:opL}
    \Lc_h[\lapc](w, \theta) = \Lc_h^3[\lapc](w, \theta, \theta) + \Lc_h^2[\lapc](w, \theta) + \Lc_h^1[\lapc](w)
\end{equation}
where \( \Lc_h^j[\lapc] \) are \( j \)-linear for \( j \in \{1, 2, 3\} \) and
\begin{subequations}
    \begin{align}
        \Lc_h^3[\lapc](w, \theta, \vartheta)
         & = g^{-2}\, \lapc^{-1}\, w\, \partial_s\theta \, \partial_s\vartheta, \label{eq:opL3}
        \\[1ex]
        \Lc_h^2[\lapc](w, \theta)
         & = - h\, \im \plr{g^{-2}\, \lapc^{-1}\, \partial_s w\, \partial_s\theta
            + g^{-1}\, \partial_s\plr{g^{-1}\, \lapc^{-1}\, w\, \partial_s\theta}} , \label{eq:opL2}
        \\[1ex]
        \Lc_h^1[\lapc](w)
         & = -h^2\, g^{-1}\, \plr{\partial_\xi\plr{g\, \lapc^{-1}\, \partial_\xi w} + \partial_s\plr{g^{-1}\, \lapc^{-1}\, \partial_s w}}. \label{eq:opL1}
    \end{align}
\end{subequations}
In the above decomposition, only \( \Lc_h^1[\lapc] \) involves derivatives with respect to \( \xi \).
Since \( g \) (resp.~\( \eta  = \sqrt{ - \cavc} > 0 \)) is a smooth function on \( \Tb_L \times (-\delta, \delta) \) (resp.~\( \Tb_L \times (-\delta, 0] \)), then \( G \) is smooth, and we write the formal Taylor expansions at \( \xi = 0 \):
\begin{equation}\label{eq:formal_gM_eta}
    g(s, \xi) = 1+\xi\kappa(s), \quad
    G(s, \xi) = \sum_{n \ge 0} \frac{\partial_\xi^n G(s,0)}{n!}\, \xi^n , \quad
    \eta(s,\xi) = \sum_{n \ge 0} \frac{\eta_n(s)}{n!}\, \xi^n,
\end{equation}
where \( \eta_n(s) = \partial_\xi^n \eta(s,0) \).
Since \( g \) and \( \eta \) do not vanish on \( \Tb_L \times \{0\} \), the formal expansions of \( g^{-1} \), \( g^{-2} \), and \( \eta^{-2} \) about \( \xi = 0 \) can be computed with \cref{eq:formal_gM_eta}.

\bigskip

Like in \cref{sec:heuristicDisk}, we introduce the scaled variable \( \rho = h^{-1} \xi \) for the normal variable \( \xi \in (-\delta, \delta) \), and we define
\[
    \vp^\pm(s,\rho) = w(s, h\rho),
    \qquad \text{for}\ (s, \rho) \in \Tb_L \times \Rb_\pm.
\]
Then, with \( g = g(s, h \rho) \) we rewrite
\begin{equation}\label{eq:opL1_scaled}
    \Lc_h^1[\lapc](\vp^\pm) = -g^{-1}\, \partial_\rho\plr{\lapc^{-1}\, g\, \partial_\rho \vp^\pm} - h^2\, g^{-1}\, \partial_s\plr{\lapc^{-1}\, g^{-1}\, \partial_s \vp^\pm}.
\end{equation}
Problem \cref{eq:prob_WKB} becomes the formal problem: Find \( {(\vp_n^\pm)}_{n \in \Nb} \in {\Cs^\infty(\Tb_L, \Ss(\Rb_\pm))}^\Nb \), \( \plr{\exp(\frac{\im}{h} \theta_n)}_{n \in \Nb} \in {\Cs^\infty(\Tb_L)}^\Nb \), and \( {(\ls_n)}_{n \in \Nb} \in \Cb^\Nb \) such that
\begin{equation}\label{eq:prob_Ph}
    \begin{cases}
        \Lc_h[\lapc]\big( \sum_n \vp_n^\pm \, h^n, \sum_n \theta_n\, h^n\big) = \big(\sum_n \ls_n\, h^n\big) \big(\sum_n \vp_n^\pm \, h^n\big)
         & \text{in } \Tb_L \times \Rb_\pm^*
        \\[
        1ex]
        \sum_n \vp_n^-(s,0)\, h^n = \sum_n \vp_n^+(s,0)\, h^n
         & \text{on } \Tb_L \times \{0\}
        \\[
        1ex]
        -{\eta_0(s)}^{-2}\, \sum_n \partial_\rho\vp_n^-(s,0)\, h^n = \sum_n \partial_\rho\vp_n^+(s,0)\, h^n
         & \text{on } \Tb_L \times \{0\}
    \end{cases}.
\end{equation}
Note that for simplicity we extend the scaled domain \( \Tb_L \times (-\frac{\delta}{h}, \frac{\delta}{h}) \) to the domain \( \Tb_L \times \Rb \) in order to be independent of \( h \) in \cref{eq:prob_Ph}, and we replace the homogeneous Dirichlet boundary condition on \( \Tb_L \times \{-\frac{\delta}{h}, \frac{\delta}{h}\} \) by the conditions \( \rho \mapsto \vp^\pm(s,\rho) \in \Ss(\Rb_\pm) \) for all \( s \in \Tb_L \).
One can always multiply the quasi-mode by a cutoff function \( \xi \mapsto \chi(\xi) \) to be in the domain \( \Tb_L \times (-\frac{\delta}{h}, \frac{\delta}{h}) \), as done later in \cref{eq:uuDef}.
With \cref{eq:opL} and \cref{eq:formal_gM_eta}, we can formally expand the operators \( \Lc_h^j[-\eta^2] = \sum_{n \ge 0} \Obf_n^{j,-}\, h^n \) and \( \Lc_h^j[1] = \sum_{n \ge 0} \Obf_n^{j,+}\, h^n \) where \( \Obf_n^{j,\pm} \) are independent of \( h \), for \( j \in \{1,2,3\} \).
From Problem \cref{eq:prob_Ph}, we obtain the family of problems \( {(\Pc_n)}_{n \in \Nb} \) by identifying powers of \( h \): Find \( \vp_n^\pm \in \Cs^\infty(\Tb_L, \Ss(\Rb_\pm)) \), \( \exp(\frac{\im}{h} \theta_n) \in \Cs^\infty(\Tb_L) \), and \( \ls_n \in \Cb \) such that
\begin{equation}\label{eq:prob_Pn}
    \begin{dcases}
        \sum_{p \in \Nb_n^4} \Obf_{p_1}^{3,\pm}\plr{\vp_{p_2}^\pm, \theta_{p_3}, \theta_{p_4}}
        + \sum_{p \in \Nb_n^3} \Obf_{p_1}^{2,\pm}\plr{\vp_{p_2}^\pm, \theta_{p_3}}
        + \sum_{p \in \Nb_n^2} \Obf_{p_1}^{1,\pm}\plr{\vp_{p_2}^\pm}
        = \sum_{p \in \Nb_n^2} \ls_{p_1}\, \vp_{p_2}^\pm
        \\[
        1ex]
        \vp_n^-(s,0) = \vp_n^+(s,0) \quad \text{and} \quad -{\eta_0(s)}^{-2}\, \partial_\rho\vp_n^-(s,0) = \partial_\rho\vp_n^+(s,0)
    \end{dcases}
\end{equation}
with the notation \( \Nb_n^d = \{p \in \Nb^d \mid p_1 + \cdots + p_d = n\} \).

\subsection{Computation of the expansion terms}\label{sec:computation_terms}

First, we set some notation that will be useful throughout the rest of the section.

\begin{notation}\label{not:asy_expan}
    We recall that \( \eta_0 = \restr{\eta}{\Gamma} = \sqrt{-\restr{\cavc}{\Gamma}} \).
    Since we assume that \( 1+\restr{\cavc}{\Gamma}^{-1} = 1-\eta_0^{-2} \neq 0 \), we can define the scalar \( \seta = \pm 1 \) to be the sign of \( 1-\eta_0^{-2} \), the functions \( \ftz = \abs{ 1-\eta_0^{-2} }^{-\frac{1}{2}} \), and \( \htz = \frac{\ftz}{\alr{\ftz}} \) where \( \alr{\cdot} \) is the mean along the interface \( \Gamma \) define by
    \[
        \alr{f} \coloneqq \frac{1}{L} \int_{\Tb_L} f(s) \di{s},
        \qquad \forall f \in \Lr^1(\Tb_L).
    \]
\end{notation}

One can obtain the expressions for the \( \Obf_0^{j,\pm} \).
\begin{lemma}\label{lem:Obf0}
    The first terms of the expansions of \( \Lc_h^3[\lapc] \), \( \Lc_h^2[\lapc] \), and \( \Lc_h^1[\lapc] \), are given by
    \begin{align*}
        \Obf_0^{3,-}(\phi, \theta, \vartheta)
         & = -\eta_0^{-2}\, \phi \, \partial_s \theta \, \partial_s \vartheta,
         &
        \Obf_0^{2,-}(\phi, \theta)
         & = 0,
         &
        \Obf_0^{1,-}(\phi)
         & = \eta_0^{-2}\, \partial_\rho^2 \phi,
        \\
        \Obf_0^{3,+}(\phi, \theta, \vartheta)
         & = \phi \, \partial_s \theta \, \partial_s \vartheta,
         &
        \Obf_0^{2,+}(\phi, \theta)
         & = 0,
         &
        \Obf_0^{1,+}(\phi)
         & = - \partial_\rho^2 \phi.
    \end{align*}
\end{lemma}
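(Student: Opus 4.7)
The plan is to substitute the Taylor expansions \eqref{eq:formal_gM_eta} directly into the expressions \eqref{eq:opL3}, \eqref{eq:opL2}, and \eqref{eq:opL1_scaled} (with $\xi = h\sigma$ in the scaled form of $\Lc_h^1$), then collect the $h^0$ coefficients. Since $g(s,h\sigma) = 1 + h\sigma\kappa(s)$ and $\eta(s,h\sigma) = \eta_0(s) + h\sigma\eta_1(s) + \Oo(h^2)$, and neither $g$ nor $\eta$ vanishes on $\Tb_L \times \{0\}$, each of $g^{-1}$, $g^{-2}$, and $\eta^{-2}$ admits a formal expansion of the form $(\,\cdot\,)|_{\xi=0} + \Oo(h)$. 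In particular, at $h = 0$ we have $g = 1$ and $\eps^{-1}|_{\xi=0} = -\eta_0(s)^{-2}$ on the interior side, $\eps^{-1}|_{\xi=0} = 1$ on the exterior side.

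For $\Lc_h^3[\eps]$, the expression \eqref{eq:opL3} carries no explicit factor of $h$, so the leading contribution is simply the value of the coefficients at $\xi = 0$, yielding $\Obf_0^{3,-}(\phi,\theta,\vartheta) = -\eta_0^{-2}\,\phi\,\partial_s\theta\,\partial_s\vartheta$ and $\Obf_0^{3,+}(\phi,\theta,\vartheta) = \phi\,\partial_s\theta\,\partial_s\vartheta$. For $\Lc_h^2[\eps]$, the expression \eqref{eq:opL2} has an explicit prefactor $-h\ic$, so every term of its expansion carries at least one power of $h$; consequently $\Obf_0^{2,\pm} = 0$.

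For $\Lc_h^1[\eps]$ we use the scaled form \eqref{eq:opL1_scaled}. The second term $-h^2\, g^{-1}\,\partial_s(\eps^{-1}\, g^{-1}\, \partial_s \vp^\pm)$ carries an explicit factor $h^2$ and thus contributes only to $\Obf_n^{1,\pm}$ for $n \ge 2$. The first term, at $h = 0$, reduces to $-\partial_\sigma(\eps^{-1}|_{\xi=0}\,\partial_\sigma \phi)$; since $\eps^{-1}|_{\xi=0}$ depends only on $s$, this simplifies to $-\eps^{-1}|_{\xi=0}\,\partial_\sigma^2 \phi$, giving $\Obf_0^{1,-}(\phi) = \eta_0^{-2}\,\partial_\sigma^2 \phi$ and $\Obf_0^{1,+}(\phi) = -\partial_\sigma^2 \phi$.

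Nothing in this argument is delicate: it is a direct term-by-term identification in the Taylor series. The only point requiring care is the sign produced by $\eps^{-1} = -\eta^{-2}$ inside the cavity, in particular the double negation in $\Obf_0^{1,-}$ that produces the positive coefficient $+\eta_0^{-2}$. The scaled form \eqref{eq:opL1_scaled} of $\Lc_h^1$ is what makes the leading $\sigma$-derivative appear at order $h^0$ rather than at order $h^2$, and is the reason why this lemma is the right starting point for the recursive construction $(\Pc_n)_{n \in \Nb}$ in \eqref{eq:prob_Pn}.
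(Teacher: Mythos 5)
Your proof is correct and matches the route the paper itself suggests (the paper gives no explicit proof, simply stating that the lemma follows "from \eqref{eq:opL} and \eqref{eq:formal_gM_eta}"). Your term-by-term identification — freezing coefficients at $\xi=0$, noting the explicit $h$ prefactors in \eqref{eq:opL2} and in the second term of \eqref{eq:opL1_scaled}, and observing that $\partial_\sigma$ acting on the $(s,h\sigma)$-dependent coefficients produces extra powers of $h$ so that only $-\eps^{-1}|_{\xi=0}\partial_\sigma^2\phi$ survives at order $h^0$ — is exactly the intended direct computation, and the sign discussion for $\Obf_0^{1,-}$ is handled correctly.
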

\begin{proof}
    From the expressions~\eqref{eq:opL3},~\eqref{eq:opL2}, and~\eqref{eq:opL1_scaled} and using the expansions~\eqref{eq:formal_gM_eta} with the change of variable \( \xi = h \rho \) gives the expressions in the lemma.
\end{proof}

Using \cref{lem:Obf0}, we rewrite Problem \( (\Pc_0) \) as: Find \( \vp_0^\pm \in \Cs^\infty(\Tb_L, \Ss(\Rb_\pm)) \), \( \theta_0 \in \Cs^\infty([0, L]) \), and \( \ls_0 \in \Cb \) such that \( (\vp_0^-,\vp_0^+) \not\equiv (0,0) \), \( \exp(\frac{\im}{h} \theta_0) \in \Cs^\infty(\Tb_L) \), and
\begin{align}\label{eq:prob_P0}
    \begin{dcases}
        \partial_\rho^2 \vp_0^- - \plr{{\theta_0'}^2 + \eta_0^2\, \ls_0} \vp_0^- = 0
         & \text{in}\ \Tb_L \times \Rb_-
        \\
        \partial_\rho^2 \vp_0^+ - \plr{{\theta_0'}^2 - \ls_0} \vp_0^+ = 0
         & \text{in}\ \Tb_L \times \Rb_+
        \\
        \vp_0^-(s,0) = \vp_0^+(s,0)
         & \text{on}\ \Tb_L \times \{0\}
        \\
        -{\eta_0(s)}^{-2}\, \partial_\rho\vp_0^-(s,0) = \partial_\rho\vp_0^+(s,0)
         & \text{on}\ \Tb_L \times \{0\}
    \end{dcases}.
\end{align}

\begin{lemma}\label{lem:sol0}
    One can choose \( h = \frac{L}{2\pi m} \) for \( m \in \Nb^* \) so that \( (\vp_0^\pm,  \theta_0,\ls_0 ) \) is given by
    \[
        \ls_0 = \frac{\seta}{\alr{\ftz}^2}, \quad
        \theta_0(s) = \int_0^s \htz(t) \di{t}, \quad
        \text{and} \quad
        \vp_0^\pm(s, \rho) = \alpha(s)\exp\plr{ -|\rho| \, \htz(s)\, {\eta_0(s)}^{\mp 1} },
    \]
    with \( \alpha \in \Cs^\infty(\Tb_L, \Cb^*) \) (and \(\seta, \, \ftz \) defined in \cref{not:asy_expan}), is solution of Problem \( (\Pc_0) \) defined in \cref{eq:prob_P0}.
\end{lemma}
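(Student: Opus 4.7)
The plan is to solve the transmission problem $(\Pc_0)$ pointwise in $s \in \Tb_L$ via an exponential ansatz in $\sigma$, and then pin down the global parameters $\ls_0$ and $h$ by enforcing constancy in $s$ together with smoothness on $\Tb_L$. For each fixed $s$, the two equations in \cref{eq:prob_P0} are linear with constant (in $\sigma$) coefficients, and the requirement $\vp_0^\pm(s,\cdot) \in \Ss(\Rb_\pm)$ selects the exponentially decaying branch on each side. Writing $\vp_0^\pm(s,\sigma) = \alpha^\pm(s)\exp(-|\sigma|\beta_\pm(s))$, the ODEs give
\[
\beta_-(s)^2 = \theta_0'(s)^2 + \eta_0(s)^2\,\ls_0, \qquad \beta_+(s)^2 = \theta_0'(s)^2 - \ls_0,
\]
with the further requirement $\beta_\pm(s) > 0$ for genuine Schwartz decay.

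Next I would impose the transmission conditions at $\sigma = 0$. Continuity yields $\alpha^-(s) = \alpha^+(s) \eqqcolon \alpha(s)$, while the conormal jump $-\eta_0(s)^{-2}\partial_\sigma\vp_0^-(s,0) = \partial_\sigma\vp_0^+(s,0)$ becomes $\beta_+(s) = \eta_0(s)^{-2}\beta_-(s)$. Eliminating $\beta_\pm^2$ between this relation and the two characteristic equations collapses to the clean identity
\[
\ls_0 = \theta_0'(s)^2\bigl(1 - \eta_0(s)^{-2}\bigr),
\]
from which $\beta_+(s) = |\theta_0'(s)|\eta_0(s)^{-1}$ and $\beta_-(s) = |\theta_0'(s)|\eta_0(s)$. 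Both are automatically positive as soon as $\theta_0'$ does not vanish, so Schwartz decay holds on both sides, and the sign of $\ls_0$ necessarily agrees with $\seta$.

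The last step is to enforce that $\ls_0$ is a true constant and that $s \mapsto \e^{\ic\theta_0(s)/h}$ extends smoothly to $\Tb_L$. Using $1 - \eta_0^{-2} = \seta\,\ftz^{-2}$, the identity above rewrites as $\theta_0'(s)^2 = \seta\,\ls_0\,\ftz(s)^2$, so $\theta_0'(s) = c\,\ftz(s)$ for some $c > 0$ and $\ls_0 = \seta\,c^2$. Smoothness on $\Tb_L$ imposes the quantization
\[
\frac{1}{h}\int_0^L \theta_0'(t)\di{t} = \frac{c\,L\,\la\ftz\ra}{h} \in 2\pi\Zb.
\]
Choosing the normalization $c = 1/\la\ftz\ra$ gives $\theta_0' = \htz$, $\ls_0 = \seta/\la\ftz\ra^2$, and reduces the quantization to $L/h = 2\pi m$ for some $m \in \Nb^*$, i.e.\ $h = L/(2\pi m)$. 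Substituting back into the ansatz produces $\vp_0^\pm(s,\sigma) = \alpha(s)\exp(-|\sigma|\htz(s)\eta_0(s)^{\mp 1})$, as stated; the amplitude $\alpha \in \Cs^\infty(\Tb_L,\Cb^*)$ remains free at this order and will be fixed at the next by a transport-type equation arising from $(\Pc_1)$.

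The main difficulty is not computational but conceptual: the careful bookkeeping of signs across the dichotomy $\eta_0 \gtrless 1$. One must simultaneously guarantee that both $\beta_\pm$ are real and positive and that the sign of $\ls_0$ matches $\seta$; the compact identity $\ls_0 = \theta_0'^2(1-\eta_0^{-2})$ encodes both facts at once. The role of the smoothness condition on $\e^{\ic\theta_0/h}$ is to quantize the semiclassical parameter and link $h$ to the azimuthal index $m$, mirroring the rescaling $\ls = m^{-2}\ell^2$ already used in the circular model of \cref{sec:heuristicDisk}.
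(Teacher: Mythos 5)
Your proposal is correct and follows essentially the same strategy as the paper's proof: solve the ODEs pointwise in $s$ with an exponential ansatz, impose Schwartz decay, eliminate the decay rates $\beta_\pm$ via the transmission conditions to obtain the eikonal relation $\ls_0 = \theta_0'^2(1-\eta_0^{-2})$, and close with the smoothness/quantization condition on $\exp(\ic\theta_0/h)$. The only minor difference is presentational — you restrict to real $\beta_\pm>0$ from the outset and make the scaling freedom in $(c,h)$ explicit before normalizing, whereas the paper keeps the square roots $\Cb^{1/2}$-valued a priori and fixes $h=L/(2\pi m)$ first — but both lead to the same computation.
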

\noindent{}The proof is detailed in \cref{sc_asy_details_0}.

\begin{remark}\label{rem:rescaling}
    \begin{itemize}[leftmargin=*]
        \item If we unravel the scaling and return to tubular coordinates, for \( m \ge 1 \) and \( (s, \xi) \in \Tb_L \times \Rb \), we formally have a pair \( (\lu_m, \uu_m) \)
              \begin{align*}
                  \lu_m         & = \plr{\frac{2\pi m}{L}}^2 \clr{\ls_0 + \OO\plr{m^{-1}}},          \\
                  \uu_m(s, \xi) & = \e^{\im \tfrac{2\pi m}{L}\, \clr{\theta_0(s) + \OO\plr{m^{-1}}}}
                  \begin{dcases}
                      \vp_0^-\plr{s, \tfrac{2\pi m}{L}\xi}, & \text{if}\ \xi \le 0
                      \\[1ex]
                      \vp_0^+\plr{s, \tfrac{2\pi m}{L}\xi}, & \text{if}\ \xi > 0
                  \end{dcases} + \OO\plr{m^{-1}},
              \end{align*}
              which characterizes surface plasmons at leading order.
        \item We remark that the leading order term, solution of \cref{eq:prob_P0}, can be seen as the leading order solution of a planar problem of the form \( -\Div(\lapc^{-1} \nabla v) = \nu v \) on \( \Tb_L \times \Rb \) with \( \lapc(s, y) = - \eta_0^2 \) on the lower half-plane, \( \lapc \equiv 1 \) on the upper half-plane, and \( \nu \in \Rb \).
    \end{itemize}
\end{remark}

\begin{remark}\label{rem:sol0_unicity}
    The construction relies on several choices that are not unique.
    \begin{itemize}[leftmargin=*]
        \item One can choose the main phase to satisfy \( \theta_0' = \htz \) or \( \theta_0' = -\htz \).
              Then one can construct two modes corresponding to \( \uu_m \) and \( \overline{\uu_m} \) (see \cref{rem:mul2}), where \( \overline{\,\cdot \,} \) is the complex conjugate.

        \item The function \( \theta_0 \) is defined up to a constant \( c \).
              Then \( \uu_m \) in \cref{rem:rescaling} is defined up to \( \e^{\im\frac{2\pi m}{L}c} \).
              For simplicity, we consider \( c = 0 \) as we normalize in the end.

        \item The functions \( \vp_0^\pm \) are defined up to a function \( \alpha \colon \Tb_L \to \Cb^* \), which contributes to the phase of \( \uu_m \) and therefore affects the number of oscillations along the interface.
              One can always shift indices so that \( {(\lu_m, \uu_m)}_{m \ge 1-q_\alpha} \), for some \( q_\alpha \in \Zb \), corresponds to a wave with \( m \) oscillations along the interface.

        \item We choose \( h = \frac{L}{2\pi m} \) to simplify the computations however other choices can be made, as long as we have \( h \propto m^{-1} \).
    \end{itemize}
\end{remark}

Now, to compute the higher order term of the expansion, from \cref{eq:prob_Pn}, \cref{lem:Obf0}, and \cref{lem:sol0}, for \( n \ge 1 \), we can rewrite Problem \( (\Pc_n) \) as: Find \( \vp_n^\pm \in \Cs^\infty(\Tb_L, \Ss(\Rb_\pm)) \), \( \exp(\im \, h^{n-1}\, \theta_n) \in \Cs^\infty(\Tb_L) \), and \( \ls_n \in \Cb \) such that
\begin{align}\label{eq:prob_Pn_expand}
    \begin{dcases}
        \partial_\rho^2 \vp_n^- - \htz^2\, \eta_0^2\, \vp_n^- = \plr{2\htz \, \theta_n' + \eta_0^2\, \ls_n} \vp_0^- + \eta_0^2\,S_{n-1}^-
         & \text{in}\ \Tb_L \times \Rb_-
        \\
        \partial_\rho^2 \vp_n^+ - \htz^2\, \eta_0^{-2}\, \vp_n^+ = \plr{2\htz \, \theta_n' - \ls_n} \vp_0^+ - S_{n-1}^+
         & \text{in}\ \Tb_L \times \Rb_+
        \\
        \vp_n^-(s,0) = \vp_n^+(s,0)
         & \text{on}\ \Tb_L \times \{0\},
        \\
        -{\eta_0(s)}^{-2}\, \partial_\rho\vp_n^-(s,0) = \partial_\rho\vp_n^+(s,0)
         & \text{on}\ \Tb_L \times \{0\}
    \end{dcases}
\end{align}
where
\begin{multline}\label{eq:Spm}
    S_{n-1}^\pm = \sum_{p = 1}^{n-1} \ls_{n-p}\, \vp_p^\pm
    - \sum_{p = 0}^{n-1} \Obf_{n-p}^{1,\pm}\plr{\vp_p^\pm}
    - \sum_{p \in \Ib_n^3} \Obf_{p_1}^{2,\pm}\plr{\vp_{p_2}^\pm, \theta_{p_3}}
    \\
    - \Obf_n^{2,\pm}\plr{\vp_0^\pm, \theta_0}
    - \sum_{p \in \Ib_n^4} \Obf_{p_1}^{3,\pm}\plr{\vp_{p_2}^\pm, \theta_{p_3}, \theta_{p_4}}
    - \Obf_n^{3,\pm}\plr{\vp_0^\pm, \theta_0, \theta_0}
\end{multline}
with \( \Ib_n^d = \setst{p \in \ilr{0, n-1}^d}{p_1 + \cdots + p_d = n} \).

\begin{lemma}\label{lem:soln}
    Define \( (\vp_0^\pm, \theta_0, \ls_0) \) according to \cref{lem:sol0}.
    For \( n \ge 1 \), there exists a solution \( (\vp_n^\pm,\theta_n, \ls_n )\in \Cs^\infty(\Tb_L, \Ss(\Rb_\pm)) \times \Cs^\infty(\Tb_L)\times \Cb \) of Problem \( (\Pc_n) \) defined in \cref{eq:prob_Pn_expand}.
    In particular, \( \vp_n^\pm \) is given by
    \[
        \vp_n^\pm(s, \rho) = P_n^\pm(s, \rho) \exp\plr{ -|\rho|\, \htz(s)\, {\eta_0(s)}^{\mp 1} },
    \]
    with polynomials \( P_n^\pm \in \Cs^\infty(\Tb_L, \Pb) \)\footnote{We denote \( \Pb \) the space of polynomial of a single variable with complex coefficients.}.
\end{lemma}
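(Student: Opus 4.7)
The plan is to proceed by strong induction on $n \geq 1$, with the base case $n=0$ handled by \cref{lem:sol0}. Fix $n \geq 1$ and assume $(\vp_j^\pm, \theta_j, \ls_j)$ for $0 \leq j \leq n-1$ have already been constructed, with each $\vp_j^\pm$ of the form $P_j^\pm(s,\sigma)\exp(-|\sigma|\htz(s)\eta_0(s)^{\mp 1})$. The first step is to check that $S_{n-1}^\pm$ defined in \cref{eq:Spm} lies in the same polynomial-times-exponential class. This follows because the rescaling $\xi = h\sigma$ converts each Taylor coefficient in \cref{eq:formal_gM_eta} into $h^k\sigma^k$, so at each fixed power of $h$ the operators $\Obf_p^{j,\pm}$ reduce to multiplication by polynomials in $\sigma$ (with smooth $s$-coefficients) combined with $s$- and $\sigma$-derivatives; both operations preserve the class. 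Combined with the induction hypothesis on $\vp_j^\pm$, this yields the required structure of $S_{n-1}^\pm$.

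The second step is the Fredholm alternative applied to the cross-sectional transmission problem, treating $s \in \Tb_L$ as a parameter. Its homogeneous version has kernel $\Cb\,\vp_0^\pm$ by \cref{lem:sol0}, and a weighted integration by parts with weight $\eps^{-1}$ (namely $-\eta_0^{-2}$ on $\Rb_-$ and $1$ on $\Rb_+$) together with the transmission conditions shows that the cross-sectional operator is formally self-adjoint in this pairing. Solvability of the inhomogeneous problem thus reduces to a single compatibility condition
\[
    -\eta_0(s)^{-2} \int_{\Rb_-} F_n^-(s,\sigma)\, \vp_0^-(s,\sigma) \di{\sigma} + \int_{\Rb_+} F_n^+(s,\sigma)\, \vp_0^+(s,\sigma) \di{\sigma} = 0,
\]
where $F_n^\pm$ denotes the right-hand side of \cref{eq:prob_Pn_expand}. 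Explicitly evaluating the Gaussian-like integrals against $\vp_0^\pm$ reduces this to an ODE on $\Tb_L$ of the form $A(s)\,\theta_n'(s) + B\, \ls_n = C_{n-1}(s)$ with $A \in \Cs^\infty(\Tb_L, \Rb)$ bounded away from $0$, $B$ a nonzero constant, and $C_{n-1} \in \Cs^\infty(\Tb_L)$ determined by the previous orders.

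The scalar $\ls_n$ is then chosen so that $\la A^{-1}(C_{n-1} - B\,\ls_n)\ra = 0$, i.e.\ $\ls_n = \la A^{-1} C_{n-1}\ra / (B\, \la A^{-1}\ra)$, which is well-defined because $A$ does not vanish. This forces $\int_0^L \theta_n'(s)\di{s} = 0$, so $\theta_n(s) = \int_0^s \theta_n'(t)\di{t}$ is smooth and periodic, giving $\theta_n \in \Cs^\infty(\Tb_L)$. Finally, I would construct $\vp_n^\pm$ via the ansatz $\vp_n^\pm(s,\sigma) = Q_n^\pm(s,\sigma)\exp(-|\sigma|\htz(s)\eta_0(s)^{\mp 1})$. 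Substitution reduces \cref{eq:prob_Pn_expand} to the ODE $\partial_\sigma^2 Q_n^\pm \mp 2\htz\,\eta_0^{\mp 1}\partial_\sigma Q_n^\pm = (\text{polynomial in }\sigma)$, solvable by term-by-term coefficient matching, yielding polynomial $Q_n^\pm$; adding a suitable scalar multiple of $\vp_0^\pm$ adjusts the two transmission conditions and closes the construction.

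The main obstacle I expect is the bookkeeping in the second step: carrying out the weighted integration by parts precisely enough to identify $A$, $B$, and $C_{n-1}$, and to verify that $A$ stays bounded away from $0$ and $B \neq 0$ uniformly in $n$. The sign-changing nature of $\eps^{-1}$ must be tracked through the pairing without absorbing a spurious sign, and it is here that the sign $\seta$ from \cref{not:asy_expan} enters naturally and ultimately dictates whether $\lu_m \to +\infty$ or $-\infty$ in \cref{thm:AsymptExpan}.
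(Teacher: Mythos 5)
Your proposal is correct in substance but takes a genuinely different route from the paper. The paper's proof is more directly constructive: it invokes Lemma A.1 of \cite{BalDauMoi20} to produce particular solutions of the form $\wt{\vp}_n^\pm = \sigma\,\wt{P}_n^\pm\,\e^\pm$ for the polynomial-times-exponential sources $S_{n-1}^\pm\,\e^\pm$, and then adds the explicit particular solution $\alpha\,\sigma\big(\tfrac{\eta_0\ls_n}{2\htz}\mp\eta_0^{\pm1}\theta_n'\big)\e^\pm$ for the $\vp_0^\pm$-terms. Since every piece carries a factor of $\sigma$, the first transmission condition $\vp_n^-(\cdot,0)=\vp_n^+(\cdot,0)$ holds automatically; the second transmission condition then becomes the single scalar constraint relating $\theta_n'(s)$ and $\ls_n$, and periodicity of $\theta_n$ fixes $\ls_n$. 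Your Fredholm-alternative framing via the weighted (indefinite) pairing $-\eta_0^{-2}\int_{\Rb_-}\cdot+\int_{\Rb_+}\cdot$ reaches the same constraint structure and makes the compatibility condition conceptually explicit; it buys a cleaner conceptual picture at the price of a few extra checks you gloss over: (i) the pairing is sign-indefinite, so you must verify it is non-degenerate on the kernel, i.e.\ $\langle\vp_0,\vp_0\rangle\propto(\eta_0^4-1)/\eta_0^3\neq 0$, which holds precisely because $\eta_0\neq 1$; (ii) your coefficient ``$B$'' is not a constant but a smooth $s$-dependent function (through $\htz(s)$, $\eta_0(s)$), so the correct solvability formula is $\ls_n=\la A^{-1}C_{n-1}\ra/\la A^{-1}B\ra$ with $\la A^{-1}B\ra\neq 0$, not $\la A^{-1}C_{n-1}\ra/(B\la A^{-1}\ra)$; and (iii) ``adding a suitable scalar multiple of $\vp_0^\pm$'' is misleading — a single multiple of the kernel element $\vp_0$ satisfies both transmission conditions and thus adjusts neither, so the degrees of freedom you actually need are independent multiples $c_-\,\e^-$ and $c_+\,\e^+$ on the two half-lines, which the paper's $\sigma$-prefactored ansatz sidesteps entirely.
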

\noindent{}The proof is detailed in \cref{sc_asy_details_n}.

\begin{remark}\label{rem:asy_ls}
    In addition to \cref{rem:sol0_unicity}, \( {(\theta_n)}_{n \ge 0} \) and \( {(\vp_n)}_{n \ge 0} \) are not uniquely defined at each step of the construction.
    However, the sequence \( {(\ls_n)}_{n \ge 0} \) will be unique (see \cref{cor:quasiUnicity}).
\end{remark}

\subsection{Proof of the \texorpdfstring{\cref{thm:quasi_resonances}}{~\ref{thm:quasi_resonances}}}\label{sec:thm:quasi_resonances}

Based on formal series \( \sum_n \vp_n^\pm \, h^n \), \( \sum_n  \theta_n\, h^n \), and \( \sum_n \ls_n\, h^n \) with \( h = \frac{L}{2 \pi m} \), we now construct quasi-pairs in the sense of \cref{def:quasi_res}.
This step is necessary to justify that our formal expansions capture scattering resonances.
First we use Borel's Lemma~\cite[Theorem 1.2.6]{HorI} for \( \ls \) and \( \theta \), and a direct generalization on the Fr\'echet space \( \Cs^\infty(\Tb_L, \Ss(\Rb_\pm)) \)~\cite[Lemma A.5]{BalDauMoi21} for \( \vp^\pm \) to establish:

\begin{lemma}\label{lem:quasiConstruct}
    There exist \( \Phi^\pm \in \Cs^\infty([0, \frac{L}{2\pi}] \times \Tb_L, \Ss(\Rb_\pm)) \), \( \Theta \in \Cs^\infty([0, \frac{L}{2\pi}] \times \Tb_L) \), and \( \Lambda \in \Cs^\infty([0, \frac{L}{2\pi}]) \) such that, for \( N \ge 1 \), \( h \in [0, \frac{L}{2\pi}] \), \( s \in \Tb_L \), and \( \rho \in \Rb_\pm \), we have
    \begin{align*}
        \Phi^\pm(h; s, \rho) & = \sum_{n = 0}^{N-1} \vp_n^\pm(s, \rho)\, h^n + h^N\, R_N^\pm(h; s, \rho)
        \\
        \Theta(h; s)         & = \sum_{n = 0}^{N-1} \theta_n(s)\, h^n + h^N\, R_N^\Theta(h; s)
        \\
        \Lambda(h)           & = \sum_{n = 0}^{N-1} \ls_n\, h^n + h^N\, R_N^\Lambda(h)
    \end{align*}
    where \( R_N^\pm \in \Cs^\infty([0, \frac{L}{2\pi}] \times \Tb_L, \Ss(\Rb_\pm)) \), \( R_N^\Theta \in \Cs^\infty([0, \frac{L}{2\pi}] \times \Tb_L) \), \( R_N^\Lambda \in \Cs^\infty([0, \frac{L}{2\pi}]) \).
\end{lemma}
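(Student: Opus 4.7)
The plan is to invoke Borel's Lemma in three settings---scalar, scalar-valued on $\Tb_L$, and Schwartz-valued on $\Tb_L$---to realize the formal series constructed via \cref{lem:sol0} and \cref{lem:soln} as genuine smooth functions of the parameter $h$. Since the three series are handled independently and each component only involves a sequence of objects in a fixed Fréchet space, no compatibility constraint needs to be enforced between them.

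First I would produce $\Lambda$. The sequence $(\ls_n)_{n \ge 0} \subset \Cb$ is just a sequence of complex numbers, so the classical Borel Lemma \cite[Thm.~1.2.6]{HorI} gives a function $\Lambda \in \Cs^\infty([0, \tfrac{L}{2\pi}])$ whose Taylor coefficients at $h = 0$ are exactly $\ls_n$. The remainder term $R_N^\Lambda$ is then defined by $R_N^\Lambda(h) = h^{-N}\bigl(\Lambda(h) - \sum_{n=0}^{N-1}\ls_n h^n\bigr)$, which extends smoothly to $h=0$ with value $\ls_N / N!$ by Taylor's formula; hence $R_N^\Lambda \in \Cs^\infty([0, \tfrac{L}{2\pi}])$. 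Next, the same argument, applied with values in the Fréchet space $\Cs^\infty(\Tb_L)$ equipped with the standard family of $\Cs^k$ seminorms, produces $\Theta \in \Cs^\infty([0, \tfrac{L}{2\pi}] \times \Tb_L)$ whose Taylor expansion in $h$ is $\sum \theta_n(s)\, h^n$, with a remainder $R_N^\Theta \in \Cs^\infty([0, \tfrac{L}{2\pi}] \times \Tb_L)$.

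The only genuinely non-trivial step is the construction of $\Phi^\pm$, since the values $\vp_n^\pm(s, \cdot)$ lie in $\Ss(\Rb_\pm)$. For this one invokes the Fréchet-space version of Borel's Lemma stated as Lemma~A.5 of \cite{BalDauMoi20}, applied on $\Cs^\infty(\Tb_L, \Ss(\Rb_\pm))$ (a Fréchet space with the countable family of seminorms controlling $\Cs^k$-regularity in $s$ and Schwartz decay in $\sigma$). This yields $\Phi^\pm \in \Cs^\infty([0, \tfrac{L}{2\pi}] \times \Tb_L, \Ss(\Rb_\pm))$ whose Taylor expansion coincides with $\sum \vp_n^\pm\, h^n$ to all orders, with smooth remainder $R_N^\pm$ in the same Fréchet space.

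The main technical subtlety is ensuring that the resummation in the Schwartz-valued setting is possible: naively summing $\sum \vp_n^\pm(s,\sigma)\, h^n$ will diverge, so one uses the standard trick of multiplying each term $\vp_n^\pm h^n$ by a cutoff $\chi(h/\eps_n)$ with $\eps_n \to 0$ chosen fast enough to guarantee absolute convergence in every seminorm of $\Cs^\infty(\Tb_L, \Ss(\Rb_\pm))$; this is exactly what \cite[Lem.~A.5]{BalDauMoi20} formalizes. Once all three components are in hand, the stated asymptotic expansions with smooth remainders follow directly from Taylor's formula applied to $\Lambda$, $\Theta$, and $\Phi^\pm$ at $h = 0$, completing the construction.
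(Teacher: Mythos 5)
Your proof takes exactly the paper's route: classical Borel's lemma (H\"ormander, Thm.\ 1.2.6) for $\Lambda$ and $\Theta$, and the Fr\'echet-valued variant on $\Cs^\infty(\Tb_L, \Ss(\Rb_\pm))$ (Lem.\ A.5 of \cite{BalDauMoi20}) for $\Phi^\pm$, with remainders obtained by Taylor division. The only cosmetic slip is the value $R_N^\Lambda(0)$, which equals $\ls_N$ rather than $\ls_N/N!$ under the normalization $\partial_h^n\Lambda(0)/n! = \ls_n$ that the stated expansion requires; this value is never used, so the argument is unaffected.
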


From those functions, we now define the scalars \( \lu_m \) and the functions \( \uu_m \) in the tubular neighborhood as
\begin{subequations}\label{eq:uuDef}
    \begin{align}
        \lu_m
         & = \plr{\tfrac{2\pi m}{L}}^2 \, \Lambda\plr{\tfrac{L}{2\pi m}} = \plr{\tfrac{2\pi m}{L}}^2\, \sum_{n \in \Nb} \ls_n\, \plr{\tfrac{2\pi m}{L}}^n \label{eq:lambda}
        \\[
        1ex]
        \uu_m(s, \xi)
         & = \chi(\xi)\, \exp\plr{\im \tfrac{2\pi m}{L}\, \Theta\plr{\tfrac{L}{2\pi m}; s}}
        \begin{dcases}
            \Phi^-\plr{\tfrac{L}{2\pi m}; s, \tfrac{2\pi m}{L}\xi}, & \text{if}\ \xi \le 0
            \\[0.5ex]
            \Phi^+\plr{\tfrac{L}{2\pi m}; s, \tfrac{2\pi m}{L}\xi}, & \text{if}\ \xi > 0
        \end{dcases},\label{eq:uu}
    \end{align}
\end{subequations}
where \( \chi \) is a cutoff function, \( \chi \in \Cs_\comp^\infty((-\delta, \delta)) \) and \( \chi \equiv 1 \) on \( \clr{ -\frac{\delta}{2}, \frac{\delta}{2} } \).
In what follows, we establish that \cref{eq:uuDef} is a quasi-pair.
First we have:

\begin{lemma}\label{lem:quasiEstimation}
    The pair \( {(\lu_m, \uu_m)}_{m \ge 1} \) defined in \cref{eq:uuDef} satisfies the following:
    \begin{itemize}
        \item[(i)] \( \uu_m \) is uniformly compactly supported and smooth in \( \Omega \) and \( \Rb^2 \setminus \overline{\Omega} \).

        \item[(ii)] \( \uu_m \) satisfies \( \clr{\uu_m}_\Gamma = \OO\plr{m^{-\infty}} \) and \( \clr{\lapc^{-1}\, \partial_n \uu_m}_\Gamma = \OO\plr{m^{-\infty}} \).

        \item[(iii)] \( \uu_m \) admits the norm expansion
            \[
                \norm{\uu_m}_{\Lr^2(\Rb^2)} = b \, m^{-\frac{1}{2}} + \OO(m^{-\frac{3}{2}})
                \qquad \text{with } b > 0.
            \]

        \item[(iv)] Let \( \underline{R}_m \coloneqq P \uu_m - \lu_m\, \uu_m \) be the reminder defined in \( \Omega \) and \( \Rb^2 \setminus \overline{\Omega} \), then we have
            \[
                \norm{\underline{R}_m}_{\Lr^2(\Omega)} + \norm{\underline{R}_m}_{\Lr^2(\Rb^2 \setminus \overline{\Omega})} = \OO\plr{m^{-\infty}}.
            \]

        \item[(v)] If two quasi-pairs \( {(\lu_m, \uu_m)}_{m \ge 1} \), \( {(\Mu_m,\vu_m)}_{m \ge 1} \) satisfy (i)--{(iv)}, and the quasi-modes have the same leading phase \( \theta_0(s) = \int_0^s \htz(t) \di{t} \) then:
            \[
                \int_{\Rb^2} \uu_m\, \overline{\vu_m} \di{x} = z_0\, m^{-1} + \OO(m^{-2})
                \quad \text{and} \quad
                \int_{\Rb^2} \uu_m\, \vu_m \di{x} = \OO(m^{-\infty})
            \]
            with \( z_0 \in \Cb^* \).
    \end{itemize}
\end{lemma}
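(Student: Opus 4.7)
My plan is to verify each of the five items by unwinding the tubular-coordinate scaling $\sigma = \tfrac{2\pi m}{L}\xi$ and exploiting the exponential decay $\e^{-|\sigma|\,\htz(s)\,\eta_0(s)^{\mp 1}}$ that is present at leading order (and inherited by every $\vp_n^\pm$, since $P_n^\pm$ is polynomial in $\sigma$ by \cref{lem:soln}).

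\textbf{Items (i) and (ii).} For (i), the cutoff $\chi\in\Cs_\comp^\infty((-\delta,\delta))$ forces $\supp(\uu_m)\subset\Vc_\delta$ independently of $m$, and the smoothness of $\Phi^\pm$, $\Theta$, and of the phase $\e^{\ic\,\frac{2\pi m}{L}\Theta}$ (guaranteed by the quantization $h=\tfrac{L}{2\pi m}$) gives the required regularity on each side of $\Gamma$. For (ii), I would apply \cref{lem:quasiConstruct} with arbitrary $N$: truncating $\Phi^\pm$ and $\Theta$ at order $N$ yields functions whose transmission jump is, by the recurrence \cref{eq:prob_Pn_expand}, identically zero up to order $N-1$; the remainder $h^N R_N^\pm$ (together with $h^N R_N^\Theta$ inside the phase) contributes a jump of size $\Oo(h^N) = \Oo(m^{-N})$. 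Since $N$ is arbitrary, the jumps are $\Oo(m^{-\infty})$.

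\textbf{Item (iii).} I would compute $\lo\uu_m\ro^2_{\Lr^2(\Rb^2)}$ in tubular coordinates using the Jacobian $g(s,\xi) = 1+\xi\kappa(s)$:
\begin{equation*}
    \lo\uu_m\ro^2 = \int_{\Tb_L}\!\!\int_{-\delta}^{\delta} |\chi(\xi)|^2\,|\Phi^\pm(\tfrac{L}{2\pi m};s,\tfrac{2\pi m}{L}\xi)|^2\,g(s,\xi)\di{\xi}\di{s}.
\end{equation*}
The change of variable $\sigma=\tfrac{2\pi m}{L}\xi$ produces a factor $\tfrac{L}{2\pi m}$. At leading order $\Phi^\pm(0;s,\sigma)=\alpha(s)\e^{-|\sigma|\htz(s)\eta_0(s)^{\mp 1}}$, and integrating $\e^{-2|\sigma|\htz\eta_0^{\mp 1}}$ over $\Rb_\pm$ gives an explicit positive smooth function of $s$. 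Integrating over $\Tb_L$ yields a constant $a^2>0$, so $\lo\uu_m\ro_{\Lr^2}=a\,m^{-1/2}+\Oo(m^{-3/2})$; the cutoff contribution and the tail $\sigma\in\Rb\setminus(-\tfrac{\delta m}{L},\tfrac{\delta m}{L})$ are absorbed in $\Oo(m^{-\infty})$.

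\textbf{Item (iv) --- the main obstacle.} In tubular coordinates, on each side of $\Gamma$,
\begin{equation*}
    (P\uu_m - \lu_m\uu_m)(s,\xi) = \chi(\xi)\,h^{-2}\,\e^{\ic h^{-1}\Theta(h;s)}\Bigl(\Lc_h[\eps](\Phi^\pm,\Theta) - \Lambda(h)\,\Phi^\pm\Bigr) + \text{(cutoff terms)},
\end{equation*}
where $h=\tfrac{L}{2\pi m}$ and the arguments $(s,\sigma)=(s,h^{-1}\xi)$ are understood. By the formal identification of powers of $h$ done in \cref{eq:prob_Pn} and solved in \cref{lem:sol0} and \cref{lem:soln}, the bracket is $\Oo(h^N)$ for every $N$, as all Taylor coefficients up to $h^{N-1}$ vanish; the Borel-type remainder is controlled uniformly on $\Tb_L\times\Rb_\pm$ by \cref{lem:quasiConstruct} together with the polynomial-times-exponential structure of $\vp_n^\pm$. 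The cutoff commutator terms contain $\chi'(\xi)$ or $\chi''(\xi)$, which vanish on $[-\tfrac{\delta}{2},\tfrac{\delta}{2}]$, so they are multiplied by $\e^{-|\sigma|\htz\eta_0^{\mp 1}}=\e^{-\Omega(m)}$ (since $|\sigma|\ge\tfrac{\delta m}{2L}\cdot \tfrac{2\pi}{1}$); these contributions are therefore exponentially small in $m$. Taking $\Lr^2$-norms (and undoing the $\sigma$-scaling, which costs at most a polynomial factor in $m$ from the $h^{-2}$ prefactor and the Jacobian) gives $\Oo(m^{-\infty})$.

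\textbf{Item (v).} Both integrals reduce, after the $\sigma$-rescaling, to
\begin{equation*}
    \int_{\Tb_L} \e^{\ic\tfrac{2\pi m}{L}(\Theta_u(h;s)\mp\Theta_v(h;s))}\,F^{\pm}_m(s)\di{s}\cdot\tfrac{L}{2\pi m},
\end{equation*}
where $F^\pm_m(s)$ comes from integrating $\Phi^\pm_u\,\overline{\Phi^\pm_v}$ (or $\Phi^\pm_u\,\Phi^\pm_v$) against the Jacobian over $\sigma\in\Rb_\pm$ and has a full asymptotic expansion in $h$. For $\int\uu_m\overline{\vu_m}$, the phases cancel at leading order (both phases equal $\theta_0$), so the $s$-integral has a nonzero $\Oo(1)$ leading coefficient $z_0$ and the overall integral is $z_0 m^{-1}+\Oo(m^{-2})$. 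For $\int\uu_m\vu_m$, the phases add to give $\e^{\ic\tfrac{2\pi m}{L}\,2\theta_0(s)}$; since $\theta_0'=\htz>0$ has no stationary points on $\Tb_L$, iterated integration by parts in $s$ yields $\Oo(m^{-N})$ for every $N$, hence $\Oo(m^{-\infty})$.

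The main technical difficulty is (iv): one must track the Borel-remainder bounds on a Fréchet scale (weighted $\Ss(\Rb_\pm)$ seminorms) while simultaneously controlling the $h^{-2}$ blow-up coming from the semi-classical prefactor, and one must cleanly isolate the exponentially-small cutoff commutator so that it does not interact with the polynomial losses.
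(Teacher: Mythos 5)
Your plan follows essentially the same structure as the paper's proof (tubular-coordinate scaling, weighted $\Lr^2$ norms, cutoff commutator isolation, Borel-remainder control), and items (i)--(iv) are correct in outline. Two remarks on (iv): the bound you claim for the commutator terms is slightly stronger than what is available --- you write that the tail contribution is \emph{exponentially} small because of the factor $\e^{-|\sigma|\htz\eta_0^{\mp 1}}$, but the Borel-resummed function $\Phi_h^\pm$ is only guaranteed to lie in $\Cs^\infty([0,\tfrac{L}{2\pi}]\times\Tb_L,\Ss(\Rb_\pm))$, i.e.\ Schwartz, not exponentially decaying. The correct (and sufficient) conclusion, obtained from \cref{lem:int_hoo}, is $\Oo(h^\infty)=\Oo(m^{-\infty})$. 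This does not break the proof, but you should not assert the stronger rate.

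The genuine gap is in item (v). You write that because the two leading phases coincide, ``the $s$-integral has a nonzero $\Oo(1)$ leading coefficient $z_0$,'' but this does not follow from the cancellation of phases alone. After the $\sigma$-integration the coefficient in front of $m^{-1}$ is
\begin{equation*}
    z_0 = \int_{\Tb_L}\alpha(s)\,\overline{\beta(s)}\,\e^{\ic\theta_1(s)-\ic\overline{\vartheta_1(s)}}\,\frac{\eta_0(s)^{-1}+\eta_0(s)}{2\,\htz(s)}\di{s},
\end{equation*}
where $\alpha,\beta$ are the (independently chosen) amplitude normalizations of the two quasi-modes and $\theta_1,\vartheta_1$ their subleading phases. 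The integrand is a complex-valued function with nonconstant argument, so nothing rules out a priori that the integral vanishes, and the asserted claim $z_0\in\Cb^*$ must be proven. The paper's argument uses the explicit expression of \cref{lem:theta_1}: it shows $\ic\theta_1 - \ic\overline{\vartheta_1} = -2f - \int_0^s\bigl(\tfrac{\alpha'}{\alpha}+\tfrac{\overline{\beta'}}{\overline{\beta}}\bigr)$ with $f$ real, from which $\alpha\,\e^{-\int\alpha'/\alpha}\equiv\alpha_0\in\Cb^*$ and $\beta\,\e^{-\int\beta'/\beta}\equiv\beta_0\in\Cb^*$ are constants, and hence $z_0 = \alpha_0\overline{\beta_0}\int_{\Tb_L}\e^{-2f}\,\tfrac{\eta_0^{-1}+\eta_0}{2\htz}\di{s}\neq 0$ because the remaining integrand is strictly positive. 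Without this computation the nonvanishing of $z_0$ is unsupported, and since \cref{lem:quasi_lam_mu} divides by $\int\uu_m\overline{\vu_m}$, this is not a cosmetic point.
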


\begin{remark}\label{rem:scale_ps}
    Items \emph{(iii)} and \emph{(v)} of \cref{lem:quasiEstimation} give us
    \[
        \int_{\Rb^2} \frac{\uu_m}{\norm{\uu_m}_{\Lr^2(\Rb^2)}}\, \frac{\overline{\vu_m}}{\norm{\overline{\vu_m}}_{\Lr^2(\Rb^2)}} \di{x} = z_0' + \OO(m^{-1}), \quad \text{with } z_0' \in \Cb^*.
    \]
\end{remark}

\begin{remark}
    At this point \( \uu_m \not\in \Dc(P) \) because the transmission conditions are not exactly satisfied, therefore it is not yet a quasi-pair in the sense of \cref{def:quasi_res}.
\end{remark}

\begin{proof}[Proof of \cref{lem:quasiEstimation}]
    Recall that we set \( h = \frac{L}{2\pi m} \), and to simplify notations we denote \( \chi_h \colon \rho \mapsto \chi(\rho h) \), \( \Phi_h^\pm \colon (s, \rho) \mapsto \Phi^\pm(h; s, \rho) \), \( \Theta_h \colon s \mapsto \Theta(h; s) \), and \( \Lambda_h = \Lambda(h) \).

    \textit{(i)} By definition of \( {(\uu_m)}_{m \ge 1} \) in \cref{eq:uu}, \textit{(i)} is satisfied.

    \medskip

    \textit{(ii)} Using \cref{lem:quasiConstruct} and that each functions \( \vp_n^\pm \) satisfies the transmission conditions via \cref{lem:soln}, one can show that \( {[\uu_m]}_\Gamma = \OO(m^{-N}) \) and \( {[\lapc^{-1}\, \partial_n \uu_m]}_\Gamma  = \OO(m^{-N}) \) for all \( N \ge 0 \), which is the definition of \( \OO(m^{-\infty}) \).

    \medskip

    \textit{(iii)} We introduce the weighted \( \Lr^2 \) semi-norm on \( \Tb_L \times \Rb_\pm \)
    \begin{equation}
        \norm{f}_{\Lr_\pm^2[h]}^2 = \int_{\Tb_L} \int_{\Rb_\pm \cap (-\frac{\delta}{h}, \frac{\delta}{h})} |f(s,\rho)|^2\ h(1+\kappa(s)\rho h) \di{\rho}\di{s}.
    \end{equation}
    Form \cref{eq:uuDef}, we obtain
    \[
        \norm{\uu_m}_{\Lr^2(\Rb^2)}^2 = \norm{\chi_h \Phi_h^- \e^{\frac{\im}{h}\, \Theta_h}}_{\Lr_-^2[h]}^2 + \norm{\chi_h \Phi_h^+ \e^{\frac{\im}{h}\, \Theta_h}}_{\Lr_+^2[h]}^2.
    \]
    From \cref{lem:sol0} and \cref{lem:quasiConstruct} for \( N = 1 \), we have
    \begin{align*}
        \Theta_h(s)         & = \int_0^s \htz(t) \di{t} + \theta_1(s) h + h^2\, R_2^\Theta(h; s)
        \\
        \Phi_h^\pm(s, \rho) & = \alpha(s)\, \exp\plr{-|\rho|\, \htz(s)\, {\eta_0(s)}^{\mp 1}} + h\, R_1^\pm(h; s, \rho)
    \end{align*}
    where \( R_2^\Theta \in \Cs^\infty([0, \frac{L}{2\pi}] \times \Tb_L) \) and \( R_1^\pm \in \Cs^\infty([0, \frac{L}{2\pi}] \times \Tb_L, \Ss(\Rb_\pm)) \).
    We deduce that
    \[
        \abs{ \norm{\chi_h \Phi_h^\pm \e^{\frac{\im}{h}\, \Theta_h}}_{\Lr_\pm^2[h]}^2
        - \norm{\chi_h\, \alpha \, \e^{-|\rho|\, \htz \eta_0^{\mp 1}} \e^{\im \, \theta_1}}_{\Lr_\pm^2[h]}^2 }
        \le C_1^\pm h^2
    \]
    for \( C_1^\pm \) some positive constant.
    We write \( \norm{\chi_h\, \alpha \, \e^{-|\rho|\, \htz \eta_0^{\mp 1}} \e^{\im \, \theta_1}}_{\Lr_\pm^2[h]}^2 = I_1^\pm + I_2^\pm + I_3^\pm \),
    \begin{align*}
        I_1^\pm & = h \int_{\Tb_L} \int_{\Rb_\pm} |\alpha(s)|^2 \e^{\mp 2 \rho \, \htz(s) {\eta_0(s)}^{\mp 1}} \e^{-2\Im\theta_1(s)} \di{\rho}\di{s} =  h \int_{\Tb_L} \frac{|\alpha(s)|^2 \e^{-2\Im\theta_1(s)}}{2 \htz(s) {\eta_0(s)}^{\mp 1}} \di{s},
        \\
        I_2^\pm & = h \int_{\Tb_L} \int_{\Rb_\pm} (|\chi(\rho h)|^2-1) |\alpha(s)|^2 \e^{\mp 2 \rho \, \htz(s) {\eta_0(s)}^{\mp 1}} \e^{-2\Im\theta_1(s)} \di{\rho}\di{s},
        \\
        I_3^\pm & = h^2 \int_{\Tb_L} \int_{\Rb_\pm} |\chi(\rho h) \alpha(s)|^2 \e^{\mp 2 \rho \, \htz(s) {\eta_0(s)}^{\mp 1}} \e^{-2\Im\theta_1(s)} \, \kappa(s)\rho \di{\rho}\di{s}.
    \end{align*}
    One can show that \( I_2^\pm = \OO(h^\infty) \) using \cref{lem:int_hoo}.
    Since \( \chi \) is bounded and the function \( (h ; s, \rho) \mapsto {|\alpha|}^2 \e^{\mp 2 \rho \, \htz \eta_0^{\mp 1}} \e^{-2\Im\theta_1} \kappa \rho \) is in \( \Cs^\infty([0, \frac{L}{2\pi}] \times \Tb_L, \Ss(\Rb_\pm)) \) there exists a constant \( C_3^\pm \) such that \( |I_3^\pm| \le C_3^\pm h^2 \).
    Combining the results we get
    \[
        \norm{\uu_m}_{\Lr^2(\Rb^2)}^2 = b^2\, m^{-1} + \OO(m^{-2})
    \]
    with
    \[
        b^2 = \frac{L}{2 \pi} \frac{I_1^+ + I_1^-}{h}
        = \frac{L}{2 \pi} \int_{\Tb_L} |\alpha(s)|^2\, \e^{-2\Im (\theta_1(s))}\ \frac{{\eta_0(s)}^{-1} + \eta_0(s)}{2 \htz(s)} \di{s} > 0.
    \]

    \medskip

    \textit{(iv)} Revisiting the change of variables in tubular coordinates and the scaling, we get
    \begin{subequations}\label{eq:esti_norm_Rm}
        \begin{align}
            \norm{\underline{R}_m}_{\Lr^2(\Omega)}
             & = h^{-2}\, \norm{\e^{\im \, h^{-1}\, \Theta_h} \big( \Lc_h[\lapc](\,\cdot, \Theta_h) - \Lambda_h\big)\, \big( \chi_h\, \Phi_h^-\big) }_{\Lr_-^2[h]},
            \\
            \norm{\underline{R}_m}_{\Lr^2(\Rb^2 \setminus \overline{\Omega})}
             & = h^{-2}\, \norm{\e^{\im \, h^{-1}\, \Theta_h} \big( \Lc_h[\lapc](\,\cdot, \Theta_h) - \Lambda_h\big)\, \big( \chi_h\, \Phi_h^+\big)}_{\Lr_+^2[h]}
        \end{align}
    \end{subequations}
    with \( \Lc_h[\lapc] \) defined in \cref{eq:prob_WKB}.
    \Cref{lem:quasiConstruct} with \( N = 1 \) and \cref{lem:sol0} give the estimation \( \Im\Theta_h = \OO(h) \), so there exists \( c_\Theta > 0 \) such that \( |\e^{\im \, h^{-1}\, \Theta_h}| \le c_\Theta \).
    Introducing the commutator \( \clr{ \Lc_h[\lapc](\,\cdot, \Theta_h), \chi_h} \) of the differential operator \( \Phi \mapsto \Lc_h[\lapc](\Phi, \Theta_h) \) with the scaled cutoff function \( \chi_h \), we deduce from \cref{eq:esti_norm_Rm}
    \begin{equation}
        \norm{\underline{R}_m}_{\Lr^2(\Omega)} \le c_\Theta \, h^{-2}\, \plr{\Nc_- + \Nc_-'}
        \quad \text{and} \quad
        \norm{\underline{R}_m}_{\Lr^2(\Rb^2 \setminus \overline{\Omega})} \le c_\Theta \, h^{-2}\, \plr{\Nc_+ + \Nc_+'}
    \end{equation}
    where \( \Nc_\pm = \norm{\chi_h \big(\Lc_h[\lapc](\,\cdot, \Theta_h) - \Lambda_h\big)\, \Phi_h^\pm}_{\Lr_\pm^2[h]} \) and \( \Nc'_\pm  = \norm{\clr{\Lc_h[\lapc](\,\cdot, \Theta_h), \chi_h} \, \Phi_h^\pm}_{\Lr_\pm^2[h]} \).
    Let's start with \( \Nc_\pm \).
    We write for \( N \ge 1 \),
    \begin{multline*}
        \Lc_h[\lapc](\Phi_h^\pm, \Theta_h)
        = \sum_{n = 0}^{N-1} h^n\, \plr{\Obf_n^{\pm, 3}(\Phi_h^\pm, \Theta_h, \Theta_h) + \Obf_n^{\pm, 2}(\Phi_h^\pm, \Theta_h) + \Obf_n^{\pm, 1}(\Phi_h^\pm)}
        \\
        + h^N \plr{\Rbf_N^{\pm, 3}(h; \Phi_h^\pm, \Theta_h, \Theta_h) + \Rbf_N^{\pm, 2}(h; \Phi_h^\pm, \Theta_h) + \Rbf_N^{\pm, 1}(h; \Phi_h^\pm)}
    \end{multline*}
    where \( \Rbf_N^{\pm, j}(h) \) are \( j \)-linear second order differential operators such that all the coefficients in \( \chi_h\, \Rbf_N^{\pm, j}(h) \) are smooth bounded functions for \( j \in \{1,2,3\} \).
    We use \cref{lem:quasiConstruct} with different \( N \) for each occurrence of \( \Phi_h^\pm \) and \( \Theta_h \), and we obtain
    \begin{multline}\label{eq:remainder_op}
        \Lc_h[\lapc](\Phi_h^\pm, \Theta_h) - \Lambda_h \Phi_h^\pm
        \\
        \begin{aligned}[t]
            = h^N \Bigg[
             & \sum_{n = 0}^{N-1} \sum_{p \in \Nb_{N-n}^3} \Obf_n^{\pm, 3}(R_{p_1}^\pm(h), R_{p_2}^\Theta(h), R_{p_3}^\Theta(h)) + \Rbf_N^{\pm, 3}(h; R_0^\pm(h),R_0^\Theta(h),R_0^\Theta(h))
            \\
             & + \sum_{n = 0}^{N-1} \sum_{p \in \Nb_{N-n}^2} \Obf_n^{\pm, 2}(R_{p_1}^\pm(h), R_{p_2}^\Theta(h)) + \Rbf_N^{\pm, 2}(h; R_0^\pm(h),R_0^\Theta(h))                                \\
             & + \sum_{n = 0}^{N-1} \plr{\Obf_n^{\pm, 1} - \ls_n} R_{N-n}^\pm(h) + \Rbf_N^{\pm, 1}(h; R_0^\pm(h)) - R_N^\Lambda(h)R_0^\pm(h) \Bigg]
        \end{aligned}
    \end{multline}
    where we used the relations in \cref{eq:prob_Pn}, giving us that for all \( Q \in \Nb \)
    \[
        \sum_{p \in \Nb_Q^4} \Obf_{p_1}^{3,\pm}\plr{\vp_{p_2}^\pm, \theta_{p_3}, \theta_{p_4}}
        + \sum_{p \in \Nb_Q^3} \Obf_{p_1}^{2,\pm}\plr{\vp_{p_2}^\pm, \theta_{p_3}}
        + \sum_{p \in \Nb_Q^2} \plr{ \Obf_{p_1}^{1,\pm} - \ls_{p_1}} \vp_{p_2}^\pm
        = 0.
    \]
    The coefficients in the operator \( \chi_h \Lc_h[\lapc](\,\cdot, \Theta_h) \) are smooth bounded functions in \( \Tb_L \times \Rb_\pm \) (see \cref{eq:opL3,eq:opL2,eq:opL1_scaled}).
    From \cref{eq:remainder_op}, we get \( \Nc_\pm \le h^N \, \norm{ F^\pm(h) }_{\Lr_\pm[h]} \) where \( F^\pm \in \Cs^\infty([0, \frac{L}{2\pi}] \times \Tb_L, \Ss(\Rb_\pm)) \), so we have \( \Nc_\pm \le C_N\, h^N \) for \( C_N \) a constant independent of \( h \) as \( h \to 0 \).
    Now, we consider the two commutator norms \( \Nc_\pm' \).
    We observe that the coefficients of the operators \( \clr{ \Lc_h[\lapc](\,\cdot, \Theta_h), \chi_h } \) are zero in \( \Tb_L \times (-\frac{\delta}{2h},0) \) and \( \Tb_L \times (0, \frac{\delta}{2h}) \).
    From this observation, we deduce that
    \[
        {\Nc_\pm'}^2 = \int_{\Tb_L} \int_{I_\pm(h)} |G^\pm(h; s, \rho)|^2 \di{\rho}\di{s}
    \]
    where \( G^\pm \in \Cs^\infty([0, \frac{L}{2\pi}] \times \Tb_L, \Ss(\Rb_\pm)) \) and \( I_\pm(h) \) are as in \cref{lem:int_hoo} for \( \rho = \frac{\delta}{2} \).
    We deduce that \( \Nc_\pm' = \OO(h^\infty) \), and we get \( \norm{\underline{R}_m}_{\Lr^2(\Omega)} + \norm{\underline{R}_m}_{\Lr^2(\Rb^2 \setminus \overline{\Omega})} = \OO\plr{h^{N-2}} \) for all \( N > 1 \).

    \medskip

    \textit{(v)} Let \( {(\theta_n)}_{n \ge 0} \) (resp.~\( {(\vartheta_n)}_{n \ge 0} \)) be a sequence of phases constructed for \( \uu_m \) (resp.~\( \vu_m \)) and \( \alpha \) (resp.~\( \beta \)) the function in \cref{lem:sol0}.
    A similar computation as in \textit{(iii)} gives that \( \int_{\Rb^2} \uu_m\, \overline{\vu_m} \di{x} = z_0\, h + \OO\plr{h^2} \) where
    \begin{align*}
        z_0
         & = \sum_\pm \int_{\Tb_L} \alpha(s) \overline{\beta(s)} \e^{\im\theta_1(s)-\im\overline{\vartheta_1(s)}} \int_{\Rb_\pm} \e^{\mp 2 \rho \, \htz(s) {\eta_0(s)}^{\mp 1}} \di{\rho}\di{s}
        \\
         & = \int_{\Tb_L} \alpha(s) \overline{\beta(s)}\, \e^{\im\theta_1(s)-\im\overline{\vartheta_1(s)}}\ \frac{{\eta_0(s)}^{-1} + \eta_0(s)}{2 \htz(s)} \di{s}.
    \end{align*}
    From the expression of \( \theta_1 \) and \( \vartheta_1 \) in
    Using \cref{lem:theta_1}, we get \( \im\theta_1(s)-\im\overline{\vartheta_1(s)} = -2f(s) - \int_0^s \frac{\alpha'(t)}{\alpha(t)} + \frac{\overline{\beta'(t)}}{\overline{\beta(t)}} \di{t} \) where \( f \) is a real function independent of \( \alpha \) and \( \beta \).
    A derivative computation shows that the functions
    \[
        s \mapsto \alpha(s) \exp\plr{-\int_0^s \frac{\alpha'(t)}{\alpha(t)} \di{t}} \equiv \alpha_0 \in \Cb^*
        \text{ and }
        s \mapsto \beta(s) \exp\plr{-\int_0^s \frac{\beta'(t)}{\beta(t)} \di{t}} \equiv \beta_0 \in \Cb^*
    \]
    are constant so \( z_0 = \alpha_0 \overline{\beta_0} \int_{\Tb_L} \frac{{\eta_0(s)}^{-1} + \eta_0(s)}{2 \htz(s)}\, \e^{-2f(s)} \di{s} \neq 0 \).
    Denoting \( R \) (resp.~\( S \)) the remainder in the construction of \( \uu_m \) (resp.~\( \vu_m \)), we have
    \[
        \int_{\Rb^2} \uu_m\, \vu_m \di{x}
        = \int_{\Tb_L} F(h; s)\ \e^{\im\frac{4\pi m}{L} \theta_0(s)} \di{s}
    \]
    where
    \begin{multline*}
        F(h; s) = \e^{\im R_1^\Theta(h; s) + \im S_1^\Theta(h; s)}
        \\
        \sum_\pm \int_{\Rb_\pm} \chi_u(h\rho) \chi_v(h\rho) R_0^\pm(h; s, \rho) S_0^\pm(h; s, \rho)  h(1+\rho\kappa(s)h)\di{\rho}.
    \end{multline*}
    Note that \( F \in \Cs^\infty([0,\frac{L}{2\pi}] \times \Tb_L) \).
    Since \( \theta_0' = \htz > 0 \), \( \theta_0 \) is a smooth diffeomorphism form \( \Tb_L \) to \( \Tb_L \), we perform the change of variable \( x = \theta_0(s) \)
    \[
        \int_{\Tb_L} F(h; s)\, \e^{\im\frac{4\pi m}{L} \theta_0(s)} \di{s} = \int_{\Tb_L} (\theta_0^{-1})'(x)\, F(h; \theta_0^{-1}(x))\ \e^{\im \frac{4\pi}{L} m x} \di{x}.
    \]
    From the fact that the function \( (h; x) \mapsto (\theta_0^{-1})'(x)\, F(h; \theta_0^{-1}(x)) \in \Cs^\infty([0, \frac{2\pi}{L}] \times \Tb_L) \) and the Riemann-Lebesgue lemma, we get
    \[
        \int_{\Tb_L} (\theta_0^{-1})'(x)\, F(h; \theta_0^{-1}(x))\ \e^{\im \frac{4\pi}{L} m x} \di{x} = \OO(m^{-\infty}).
    \]
\end{proof}

We now add a correction to \( \uu_m \) in order to satisfy the transmission conditions.
Consider \( {(\lu_m, \uu_m)}_{m \ge 1} \) in \cref{eq:uuDef}, satisfying \cref{lem:quasiEstimation}.
We define
\[
    \check{\uu}_m(s,\xi) = \chi(\xi) \begin{dcases}
        0,
         & \text{if } \xi \le 0,
        \\
        \clr{\uu_m}_{\Tb_L \times \{0\}}(s) + \xi \clr{\lapc^{-1}\, \partial_\xi\uu_m}_{\Tb_L \times \{0\}}(s),
         & \text{if}\ \xi > 0,
    \end{dcases}
\]
which gives \( \clr{\uu_m - \check{\uu}_m}_\Gamma = 0 \) and \( \clr{\lapc^{-1}\, \partial_n \plr{\uu_m - \check{\uu}_m}}_\Gamma = 0 \).
Using the regularity and uniform compact support of \( \uu_m - \check{\uu}_m \), we obtain \( \uu_m - \check{\uu}_m \in \Dc(P) \).
Using \cref{lem:quasiEstimation}, we have \( \norm{\check{\uu}_m}_{\Lr^2(\Rb^2)} = \OO(m^{-\infty}) \) therefore \( (P - \lu_m) (\uu_m - \check{\uu}_m) = \OO(m^{-\infty}) \).
We then replace \( \uu_m \) by
\begin{equation}\label{eq:uu_m_rescaled}
    \uu_m = \frac{\uu_m - \check{\uu}_m}{\norm{\uu_m - \check{\uu}_m}_{\Lr^2(\Rb^2)}}
\end{equation}
which now makes \( {(\lu_m, \uu_m)}_{m \ge 1} \) a quasi-pair in the sense of \cref{def:quasi_res}.
To prove \cref{thm:quasi_resonances}, we simply need to show that \( {(\lu_m)}_{m \ge 1} \) are real and independent of the construction.
To that aim we will check that \( {(\ls_n)}_{n\geq 1} \) are real and unique (see \cref{rem:asy_ls}).

\begin{lemma}\label{lem:quasi_lam_mu}
    Let \( {(\lu_m, \uu_m)}_{m \ge 1} \) and \( {(\Mu_m, \vu_m)}_{m \ge 1} \) two quasi-pairs in the sense of \cref{def:quasi_res} corresponding to the same integer \( m \) and having the same leading order phase \( \theta_0 \colon s \mapsto \int_0^s \htz(t) \di{t} \).
    Then we have the following estimate \( \lu_m - \overline{\Mu_m} = \OO\plr{m^{-\infty}} \).
\end{lemma}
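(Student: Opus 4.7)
The plan is to exploit the self-adjointness of $(P, D(P))$ established earlier together with the near-orthogonality/parallelism estimates in \cref{lem:quasiEstimation}(v) to compare $\lu_m$ and $\overline{\Mu_m}$ by pairing. By \cref{def:quasi} both $\uu_m$ and $\vu_m$ belong to $D(P)$ and are normalized in $\Lr^2(\Rb^2)$; write
\begin{equation*}
    P\uu_m = \lu_m\, \uu_m + \underline{R}_m, \qquad P\vu_m = \Mu_m\, \vu_m + \underline{S}_m,
\end{equation*}
where $\lo\underline{R}_m\ro_{\Lr^2(\Rb^2)}$ and $\lo\underline{S}_m\ro_{\Lr^2(\Rb^2)}$ are both $\Oo(m^{-\infty})$.

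The first step is to test the identity for $\uu_m$ against $\overline{\vu_m}$ in $\Lr^2(\Rb^2)$. Using that $P$ is self-adjoint on $\Lr^2(\Rb^2)$ with real coefficients (so $\la Pu, \overline{v}\ra_{\Lr^2} = \la u, \overline{Pv}\ra_{\Lr^2}$ for $u, v \in D(P)$), I would rearrange to obtain
\begin{equation*}
    (\lu_m - \overline{\Mu_m}) \int_{\Rb^2} \uu_m\, \overline{\vu_m} \di{x}
    = \int_{\Rb^2} \uu_m\, \overline{\underline{S}_m} \di{x} - \int_{\Rb^2} \underline{R}_m\, \overline{\vu_m} \di{x}.
\end{equation*}
The right-hand side is $\Oo(m^{-\infty})$ by Cauchy--Schwarz, since $\lo\uu_m\ro_{\Lr^2} = \lo\vu_m\ro_{\Lr^2} = 1$ and the remainders are $\Oo(m^{-\infty})$.

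The second step is to control the prefactor from below. Since the two quasi-pairs share the leading phase $\theta_0(s) = \int_0^s \htz(t) \di{t}$, the normalized version of \cref{lem:quasiEstimation}(v) stated in \cref{rem:scale_ps} applies and gives
\begin{equation*}
    \int_{\Rb^2} \uu_m\, \overline{\vu_m} \di{x} = z_0' + \Oo(m^{-1}), \qquad z_0' \in \Cb^*,
\end{equation*}
so this integral is bounded away from $0$ for $m$ large. Dividing the previous identity by this nonvanishing prefactor yields the desired estimate $\lu_m - \overline{\Mu_m} = \Oo(m^{-\infty})$.

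The only subtle point I anticipate is making sure the self-adjoint pairing is fully justified: the quasi-modes must genuinely lie in $D(P)$ (not only satisfy the transmission conditions up to $\Oo(m^{-\infty})$). This was arranged at the end of the proof of \cref{thm:AsymptExpan} by the correction $\uu_m - \check{\uu}_m$, so both $\uu_m$ and $\vu_m$ can be assumed to be genuine $D(P)$-elements, and the integration by parts underlying the self-adjointness identity goes through without boundary terms. Everything else is bookkeeping on $\Oo(m^{-\infty})$ remainders via Cauchy--Schwarz.
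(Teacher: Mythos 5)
Your proof is correct and follows essentially the same route as the paper: define the residuals $\underline{R}_m, \underline{S}_m$, use the symmetry of $P$ to arrive at the pairing identity $\lp\lu_m - \overline{\Mu_m}\rp \int_{\Rb^2} \uu_m\, \overline{\vu_m} \di{x} = \int_{\Rb^2} \uu_m\, \overline{\underline{S}_m} \di{x} - \int_{\Rb^2} \underline{R}_m\, \overline{\vu_m} \di{x}$, bound the right-hand side by Cauchy--Schwarz and the $\Oo(m^{-\infty})$ remainder estimates, and then divide by the prefactor which \cref{rem:scale_ps} shows is $z_0' + \Oo(m^{-1})$ with $z_0' \neq 0$. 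Your closing remark about the quasi-modes genuinely lying in $D(P)$ is a legitimate and welcome clarification, but it is already guaranteed directly by item (1) of \cref{def:quasi} rather than requiring recourse to the $\check{\uu}_m$ correction.
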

\begin{proof}
    Let \( \underline{R}_m \), \( \underline{S}_m \) be the residuals \( \underline{R}_m = P\uu_m - \lu_m\, \uu_m \) and \( \underline{S}_m = P \vu_m - \Mu_m\, \vu_m \).
    By definition, the residuals satisfy \( \norm{\underline{R}_m}_{\Lr^2(\Rb^2)} = \OO(m^{-\infty}) \) and \( \norm{\underline{S}_m}_{\Lr^2(\Rb^2)} = \OO(m^{-\infty}) \).
    Using the symmetry of the operator \( P \), we get
    \[
        \plr{\lu_m - \overline{\Mu_m}} \int_{\Rb^2} \uu_m\, \overline{\vu_m} \di{x}
        = \int_{\Rb^2} \uu_m\, \overline{\underline{S}_m} \di{x} - \int_{\Rb^2} \underline{R}_m\, \overline{\vu_m} \di{x}
        = \OO\plr{m^{-\infty}}.
    \]
    From \cref{rem:scale_ps} one can show that there exists \( z_0 \in \Cb^* \) such that \( \int_{\Rb^2} \uu_m\, \overline{\vu_m} \di{x} = z_0 + \OO(m^{-1}) \).
    Then \( \lu_m - \overline{\Mu_m} = \OO(m^{-\infty}) \) as \( m \to +\infty \).
\end{proof}

\begin{corollary}\label{cor:quasiUnicity}
    The quasi-resonances \( {(\lu_m)}_{m \ge 1} \) are real and are independent of the construction.
\end{corollary}
\begin{proof}
    By applying \cref{lem:quasi_lam_mu} to \( {(\lu_m, \uu_m)}_{m \ge 1} \) and \( {(\lu_m, \uu_m)}_{m \ge 1} \) we get \( \Im\lu_m = \OO(m^{-\infty}) \) which implies that \( \Im\ls_n = 0 \) for all \( n \in \Nb \).
    Then taking \( {(\lu_m, \uu_m)}_{m \ge 1} \) and \( {(\Mu_m, \vu_m)}_{m \ge 1} \) two quasi-pairs in the sense of \cref{def:quasi_res}, from \cref{rem:sol0_unicity}, we can always assume that they have the same leading phase \( \theta_0 \colon s \mapsto \int_0^s \htz(t) \di{t} \) (by taking \( \overline{\vu_m} \) instead of \( \vu_m \)).
    Therefore, \cref{lem:quasi_lam_mu} and the fact that the quasi-resonances are real give us \( \lu_m - \Mu_m = \OO(m^{-\infty}) \), which implies that \( \ls_n = \breve{\mu}_n \).
\end{proof}

Results from \cref{cor:quasiUnicity}, \cref{lem:quasiEstimation} and \cref{eq:uu_m_rescaled} imply \cref{thm:quasi_resonances}.
In the next section we use \cref{thm:quasi_resonances} and the ``quasimodes to resonances'' result to prove \cref{thm:resonances}, \cref{cor:explo_res}, and \cref{cor:explo_stability}. This establishes the connection between the quasi-pairs and the scattering resonances, plus their effect on the scattering instabilities.
We end this section with a few remarks.

\begin{remark}\label{rem:mul2}
    With \cref{cor:quasiUnicity}, given a quasi-pair \( {(\lu_m, \uu_m)}_{m \ge 1} \), we have a second quasi-orthogonal quasi-pair \( {(\lu_m, \overline{\uu_m})}_{m \ge 1} \) with the same quasi-resonance in the sense that, from \textit{(v)} in \cref{lem:quasiEstimation},
    \( \int_{\Rb^2} \uu_m\, \overline{\ \overline{\uu_m}\,} \di{x} = \OO\plr{m^{-\infty}} \).
    The quasi-resonances have an asymptotic multiplicity of \( 2 \), related to the chosen sign of the leading phase \( \theta_0 \) (see \cref{rem:sol0_unicity}).
\end{remark}

\begin{remark}\label{rem:generalization}
    We can generalize the hypothesis of \cref{thm:quasi_resonances} to complex-valued function \( \cavc \in \Cs^\infty(\overline{\Omega}, \Cb^*) \) as long as \( \restr{\cavc}{\Gamma} \neq -1 \) and \( \rho \mapsto \vp_0^\pm(s, \rho) \) in \cref{lem:sol0} are exponentially decreasing for \( \rho \to \pm \infty \).
    In other words we need
    \[
        \Re\plr{ \htz(s)\, {\eta_0(s)}^{\pm 1} } > 0
        \qquad \text{where }
        \ftz(s) = \plr{ 1-{\eta_0(s)}^{-2} }^{-\frac{1}{2}}
        \text{ and }
        \htz = \frac{\ftz}{\alr{\ftz}}
    \]
    and considering the principal branch of the square root.
    However, if \( \cavc \) is complex non-real, the operator \( P \) is non-self-adjoint and \cref{lem:quasi_lam_mu}, \cref{cor:quasiUnicity}, \cref{rem:mul2} are not true anymore.
\end{remark}

\subsection{First expansion terms of \texorpdfstring{\( \lu_m \)}{lu}}\label{sec:symbolic}

We provide here a few terms of the asymptotic expansions of \( \lu_m \) to identify their key features.
The coefficients \( \ls_n \) are computed using formulas in the proof of \cref{lem:soln} via SymPy~\cite{Sympy}, and symbolic codes are available in the GitHub repository~\cite{CM-repo}.

\subsubsection*{General cavity with varying coefficient}
We set the coefficients \( \eta_0(s) = \eta(s, 0) \) and \( \eta_1(s) = \partial_\xi\eta(s, 0) \), we obtain
\begin{equation}\label{eq:lexpan_gen}
    \lu_m = \plr{\frac{2 \pi m}{L}}^2 \frac{\seta}{\alr{\ftz}^2} \clr{1
        -\alr{\frac{\eta_0^2-1}{\eta_0} \kappa + \frac{\eta_1}{{\eta_0}^2 ({\eta_0}^2-1)}} \plr{\frac{L}{2 \pi m}}
        + \OO\plr{m^{-2}}
    }.
\end{equation}

Looking at the first terms one can see that:
\begin{itemize}[leftmargin=*]
    \item The sign comes from the leading term and depends on \(\varsigma = \text{sign}(1-\eta_0^2)\) (see \cref{not:asy_expan}), namely on \( \cavc < -1 \) or \( -1 < \cavc < 0 \).

    \item The curvature \( \kappa \) appears only starting at the second term, it has a weak effect on the expansion.

    \item The terms blow up in the limit \( \eta_0 \to 1 \) (which corresponds to \( \cavc|_\Gamma \to -1 \)).
          This is expected as for \( \cavc \equiv -1 \) since surface plasmons waves correspond to zero eigenvalues.
\end{itemize}
One can compute higher order terms such as \( \ls_2 \), however it becomes rather cumbersome and lengthy to present here (expressions can be found in~\cite{CM-repo}).
We provide below a specific case where the expression \( \ls_2 \) is not too large.

\subsubsection*{Circular cavity  of radius \( R \) with radially varying coefficient \( \eta(r) \)}

Following previous results, we then set \( \eta_0 = \eta(R) \), \( \eta_1 = \partial_r \eta(R) \), \( \eta_2 = \partial^2_r \eta(R) \), and we obtain
\begin{equation}\label{eq:lexpan_disk}
    \lu_m = \frac{m^2}{R^2} \plr{1 - \eta_0^{-2}} \clr{1
        -\plr{\frac{\eta_0^2-1}{\eta_0 R} + \frac{\eta_1}{\eta_0^2 (\eta_0^2-1)}} \plr{\frac{R}{m}}
        + \ls_2 \plr{\frac{R}{m}}^2
        + \OO\plr{m^{-3}}
    }
\end{equation}
where
\[
    \ls_2 =
    - \frac{(\eta_0^4 + 1) (\eta_0^4 - \eta_0^2 + 1)}{2 \eta_0^4 R^2}
    + \frac{\eta_1 (\eta_0^8 + 2 \eta_0^6 - 3 \eta_0^2 + 2)}{5 \eta_0^5 (\eta_0^4 - 1) R}
    - \frac{\eta_1^2 (3 \eta_0^4 + 4 \eta_0^2 - 1)}{2 \eta_0^6 (\eta_0^4 - 1)}
    + \frac{\eta_2}{2 \eta_0^3 (\eta_0^2 - 1)}.
\]

\section{Black Box Scattering framework for unbounded transmission problems with sign-changing coefficient}\label{sec:resolv_results}

\subsection{Proof of \texorpdfstring{\cref{thm:resonances}}{\ref{thm:resonances}}}

In this section, we prove \cref{thm:resonances}.
In the case (i): \(  \cavc(\gamma) < -1 \) for all \( \gamma \in \Gamma \), it is based on the ``quasimodes to quasi-resonances'' result (in particular we follow the theorem of \textsc{Tang} and \textsc{Zworski}~\cite{TanZwo98}) from the black box scattering framework. In the case (ii): \( -1 < \cavc(\gamma) < 0 \) for all \( \gamma \in \Gamma \), the proof is based on the spectral theorem for self-adjoint operators. \\
We start by case (ii). The operator \( P \) is self-adjoint and \( \Rb_- \) only contains discrete eigenvalues (see \cref{lem:P_spectral}). Then the existence of quasi-pairs with \( \lu_m < 0 \), using \cite[Proposition 8.20]{helffer_2013}, gives us
\[
    \Dist\plr{\lu_m, \spe(P)} = \OO\plr{m^{-\infty}}.
\]
Therefore, there exists a sequence \( \plr{\ell_m}_{m \geq 1} \) such that \( \ell_m \in \im\Rb_+ \), \( \ell_m^2 \) is a negative eigenvalue, and \( \ell_m^2 = \lu_m + \OO\plr{m^{-\infty}} \).\\
For the case (i), the proof is a direct consequence of the following elements:
\begin{itemize}[leftmargin=*]
    \item the operator \( (P, \Dc(P)) \) is a \emph{black box Hamiltonian} in the sense of~\cite[Definition 4.1]{DyaZwo2019} (see \cref{lem:bbH});
    \item one can estimate the number of eigenvalues of the reference operator \( P^\sharp \) (a truncated version of the operator \( P \)) defined in \cref{def:trunc_P} (see \cref{lem:weyl_esti}).
          This allows to establish that the set of resonances, which is discrete, is not too large (one can count them).
\end{itemize}

\begin{remark}\label{rem:two_quasi_mode}
    From \cref{rem:mul2}, we have two quasi-orthogonal quasi-pairs and, as in~\cite[Theorem~7.D]{BalDauMoi21}, we have two resonances close to the quasi-resonance.
    This will be illustrated in \cref{sec:back_to_sca}.
\end{remark}

In what follows we prove \cref{lem:bbH} and \cref{lem:weyl_esti}.
Let us denote \( \Db \coloneqq B(0, R_0) \) the open disk of radius \( R_0 \) so that the cavity \( \Omega \) is compactly embedded in \( \Db \).
We denote \( \Ind{\Db} \), \( \Ind{\Rb^2 \setminus \overline{\Db}} \) the restriction on \( \Db \), \( \Rb^2 \setminus \overline{\Db} \), respectively.

\begin{lemma}\label{lem:bbH}
    The operator \( (P, \Dc(P)) \) on \( \Lr^2(\Rb^2) \) is a black box Hamiltonian in the sense of~\cite[Definition 4.1]{DyaZwo2019}, meaning that the following is satisfied:
    \begin{description}[leftmargin=12ex,font=\normalfont,style=nextline,itemsep=1ex]
        \item[\hfill (4.1.1)] we have the orthogonal decomposition \( \Lr^2(\Rb^2) = \Lr^2(\Db) \oplus \Lr^2(\Rb^2 \setminus \overline{\Db}) \).

        \item[\hfill (4.1.4)] the operator \( (P, \Dc(P)) \) is self-adjoint and \( \Ind{\Rb^2 \setminus \overline{\Db}} \Dc(P) \subset \Hr^2(\Rb^2 \setminus \overline{\Db}) \).

        \item[\hfill (4.1.5)] outside \( \Db \) the operator is equal to the Laplacian.

        \item[\hfill (4.1.6)] for all \( v \in \Hr^2(\Rb^2) \) such that \( \restr{v}{B(0, R_0+\eps)} \equiv 0 \) for \( \eps >0 \) then \( v \in \Dc(P) \).

        \item[\hfill (4.1.12)] the operator \( \Ind{\Db}\, {(P+\im)}^{-1}: \Lr^2(\Rb^2) \to \Lr^2(\Db) \) is compact.
    \end{description}
\end{lemma}
\begin{proof}
    The condition (4.1.1) is satisfied by definition.
    The condition (4.1.4) is a consequence of \cref{lem:P_spectral} and \cref{lem:P_domain}.
    The condition (4.1.5) is satisfied by definition of \( (P, \Dc(P)) \): \( \Ind{\Rb^2 \setminus \overline{\Db}} \plr{P u} = -\Delta\plr{ \Ind{\Rb^2 \setminus \overline{\Db}}(u) } \) for \( u \in \Dc(P) \).
    The condition (4.1.6) is a consequence of \cref{lem:P_domain}.
    For the condition (4.1.12), we define \( A \colon \Lr^2(\Rb^2) \to \Lr^2(\Db) \), \( u \mapsto \iota \Ind{\Db}\, {(P+\im)}^{-1} \), with the embedding \( \iota \colon \Hr^1(\Db) \to \Lr^2(\Db) \).
    The operator \( A \) is compact because \( -\im \) is in the resolvent set (\cref{lem:P_spectral}), the projection \( \Ind{\Db} \) goes from \( \Dc(P) \) to \( \Hr^1(\Db) \) (\cref{lem:P_domain}), and \( \iota \) is compact~\cite[Theorem 9.16]{Brezis2011}.
\end{proof}

Now that the operator \( (P, \Dc(P)) \) is a black box Hamiltonian, the solutions of \cref{eq:ResonanceProblem} are well-defined: this means that we have \((\ell, \, u) \in \mathbb{C} \setminus \Rb_- \times \mathcal{D}(P)\) and \cref{eq:ResonanceProblem} fits in the black box scattering framework.
Then we define the reference operator and estimate its eigenvalues.
From \cref{lem:bbH} we deduce that Conditions (1), (2), (3) in~\cite{TanZwo98} are satisfied.
\Cref{lem:weyl_esti} establishes that the last condition, Condition (4) in~\cite{TanZwo98}, is satisfied.
\begin{definition}\label{def:trunc_P}
    From the operator \( (P, \Dc(P)) \) on \( \Lr^2(\Rb^2) \), we define the reference operator \( (P^\sharp, \Dc(P^\sharp)) \) on \( \Lr^2\plr{{(\Rb / R_\sharp \Zb)}^2} \) with \( R_\sharp > R_0 \) by \( P^\sharp \colon u \mapsto -\Div\plr{\lapc_\sharp^{-1} \, \nabla u} \) and
    \[
        \Dc(P^\sharp) = \setst{u \in \Lr^2\plr{{(\Rb / R_\sharp \Zb)}^2}}{P^\sharp u \in \Lr^2\plr{{(\Rb / R_\sharp \Zb)}^2}}
    \]
    where \( \lapc_\sharp = \cavc \Ind{\overline{\Omega}} + \Ind{{(\Rb / R_\sharp \Zb)}^2 \setminus \overline{\Omega}} \) is the ``restriction'' of \( \lapc \) to \( {(\Rb / R_\sharp \Zb)}^2 \).
\end{definition}

\begin{lemma}\label{lem:weyl_esti}
    The reference operator \( (P^\sharp, \Dc(P^\sharp)) \) is self-adjoint, has discrete spectrum, and we have the following weak Weyl estimate
    \[
        \Card \plr{ \spe(P^\sharp) \cap [-\mu, \mu] } = \OO\plr{\mu}
        \qquad \text{for } \mu \geq 1.
    \]
\end{lemma}
\begin{proof}
    The proof that the reference operator is self-adjoint is the similar as in the proof of \cref{lem:P_spectral} (see also \cite[Theorem 4.2]{carvalho2015etude}).
    The spectrum is discrete because \( {(\Rb / R_\sharp \Zb)}^2 \) is a compact set.
    The weak Weyl estimation comes from~\cite[Section 4]{ManMoiVer21}, particularly from Corollary~8.
    The proofs are the same, one simply replaces \( \Hr^1_0(\Omega) \) by the zero mean function in \( \Hr^1\plr{ {(\Rb / R_\sharp \Zb)}^2 } \).
\end{proof}

\Cref{lem:weyl_esti} shows that Condition~(4) in~\cite{TanZwo98} is satisfied with \( n^\sharp = 2 \).
Now that the resonance set is well-defined and characterized by quasi-pairs, we can prove \cref{cor:explo_res}.
We will use the following result:

\begin{lemma}\label{lem:resolvent}
    For \( k \in \Cb \setminus \Rb_- \), we denote \( \Rsv(k) \colon \Lr_\comp^2(\Rb^2) \to \Dc_\loc(P) \) the meromorphic continuation of the resolvent.
    For \( k > 0 \) and \( \chi \in \Cs_\comp^\infty(\Rb^2) \), we define \( \Rsv_\chi(k) \colon \Lr^2(\Rb^2) \to \Dc(P) \) the cut-off resolvent by \( \Rsv_\chi(k) = \chi \Rsv(k) \chi \), as in~\cite[Section 3.2]{MoiSpe2019}.
\end{lemma}
\begin{proof}
    The meromorphic continuation of the resolvent is given by Theorem 4.4 in~\cite{DyaZwo2019} and \cref{lem:bbH}.
\end{proof}

\subsection{Proof of \texorpdfstring{\cref{cor:explo_res}}{\ref{cor:explo_res}}}\label{sec:proof_explo_res}

Let \( \chi \) and \( \widetilde{\chi} \) in \( \Cs_\comp^\infty(\Rb^2) \) with \( \chi,\, \widetilde{\chi} \equiv 1 \) on an open neighborhood of \( \overline{\Omega} \) such that \( \supp\plr{\widetilde{\chi}} \subset \{\chi = 1\} \).
From the definition of the quasi-pair \( \plr{\lu_m,\, \uu_m}_{m \geq 1} \), let \( \ku_m = \sqrt{\lu_m} > 0 \) and \( \vu_m = \widetilde{\chi} \uu_m / \norm{\widetilde{\chi} \uu_m}_{\Lr^2(\Rb^2)} \).
The family \( \plr{\ku_m^2,\, \vu_m}_{m \geq 1} \) is still a quasi-pair, therefore we have \( P\vu_m - \ku_m^2 \vu_m = \underline{R}_m \) with the estimation \( \norm{\underline{R}_m}_{\Lr^2(\Rb^2)} = \OO\plr{m^{-\infty}} \). Due to the fact that \( \supp\plr{\vu_m},\, \supp\plr{\underline{R}_m} \subset \supp\plr{\widetilde{\chi}} \subset \{\chi = 1\} \), we have \( \vu_m = \chi \vu_m \) and \( \underline{R}_m = \chi \underline{R}_m \). We obtain
\begin{align*}
    \plr{P - \ku_m^2} \vu_m = \underline{R}_m
     & \quad\Rightarrow\quad
    \vu_m = \Rsv\plr{\ku_m}\plr{\underline{R}_m}
    \\
     & \quad\Rightarrow\quad
    \chi \vu_m = \chi \Rsv\plr{\ku_m}\plr{\chi \underline{R}_m}
    \\
     & \quad\Rightarrow\quad
    \vu_m = \Rsv_\chi\plr{\ku_m}\plr{\underline{R}_m}.
\end{align*}
We deduce that for all \( N \in \Nb \), there exists \( C_N > 0 \) such that
\[
    1
    = \norm{\vu_m}_{\Lr^2(\Rb^2)}
    = \norm{\Rsv_\chi(\ku_m) \plr{\underline{R}_m}}_{\Lr^2(\Rb^2)}
    \leq \opnorm{\Rsv_\chi(\ku_m)} C_N^{-1} m^{-N}
\]
which gives the result.

\begin{remark}\label{rem:true_to_quasi}
    Results from \cite{stefanov2000resonances} hold as well in this case: from families of resonances close to the positive real axis, we can create quasi-resonances.
\end{remark}

\subsection{Proof of \texorpdfstring{\cref{cor:explo_stability}}{\ref{cor:explo_stability}}}

Now let \( \ku_m \coloneqq \sqrt{\lu_m} > 0 \) for \( m \ge 1 \).
Results from \cref{sec:construct_quasi} give us \( -\Div(\lapc^{-1}\, \nabla \uu_m) - \ku_m^2 \uu_m = \underline{R}_m \) with the remainder estimate \( \norm{\underline{R}_m}_{\Lr^2(\Rb^2)} = \OO(m^{-\infty}) \).
\Cref{lem:Scat_pb_wellposed} with \( g = 0 \) and \( f = \underline{R}_m \), gives us
\[
    \norm{\uu_m}_{\Lr^2(\Rb^2)} \le C(\ku_m)\ \norm{\underline{R}_m}_{\Lr^2(\Rb^2)}.
\]
Since \( \norm{\uu_m}_{\Lr^2(\Rb^2)} = 1 \) by definition and for all \( N \ge 1 \), there exists \( \widetilde{c}_N > 0 \) such that \( \norm{\underline{R}_m}_{\Lr^2(\Rb^2)} \le \widetilde{c}_N m^{-N} \) then \( \widetilde{c}_N^{-1}\, m^N \le C(\ku_m) \), for all \( m \ge 1 \).

\section{Numerical illustration of metamaterial scattering resonances}\label{sec:back_to_sca}

Using \Cref{thm:resonances} and \cref{cor:explo_res} (proved in \Cref{sec:resolv_results}), we have shown that there exist scattering resonances located close to the positive real axis when \( \cavc(\gamma) < -1 \) for all \( \gamma \in \Gamma \).
Choosing \( k = \Re (\ell) \) will lead to scattering instabilities for \cref{eq:ScatteringProblem}.
In what follows we provide several numerical examples showing the norm of the resolvent operator exploding close to scattering resonances.
First we use the Finite Element Method (FEM) to compute the scattering resonances \( \ell \) of the cavity close to the real axis (Step 1), then we compute the norm of the discretized cut-off resolvent operator for various \( k \) (Step 2).
We also compare the scattering resonances with the first terms of the obtained asymptotic expansions (Step 3).
We provide details about the steps below.
We consider three cases:
\begin{itemize}
    \item[(A)] Circular cavity of radius \( 1 \) with constant \( \cavc \equiv -1.1 \) as represented in \cref{fig:disk_epsCst_geo}.

    \item[(B)] Circular cavity of radius \( 1 \) with linearly varying permittivity \( \cavc^{\lapc_\mathsf{m}, \lapc_\mathsf{M}} \colon (x, y) \mapsto \frac{\lapc_\mathsf{m} + \lapc_\mathsf{M}}{2} + \frac{\lapc_\mathsf{M} - \lapc_\mathsf{m}}{2}x \), with \( (\lapc_\mathsf{m}, \lapc_\mathsf{M}) = (-1.2, -1.1) \), as represented in \cref{fig:disk_epsVar_geo}.

    \item[(C)] Peanut cavity with constant \( \cavc \equiv -1.1 \) as represented in \cref{fig:peanut_epsCst_geo}.
        The peanut boundary is parameterized by \( r(\theta) = 1 - \frac{3}{10}\cos(2\theta) \) with \( \theta \in \Rb / 2\pi\Zb \).
\end{itemize}

\begin{figure}[!htbp]
    \centering
    \subfloat[(A) Disk]{\label{fig:disk_epsCst_geo}
        \includegraphics{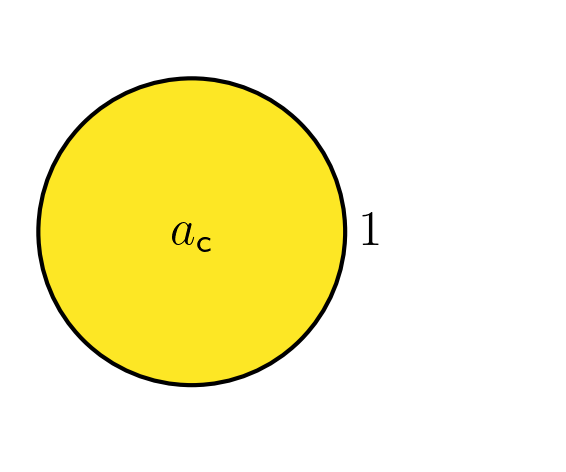}}\hspace{1em}\subfloat[(B) Disk, varying \( \cavc \)]{\label{fig:disk_epsVar_geo}
        \includegraphics{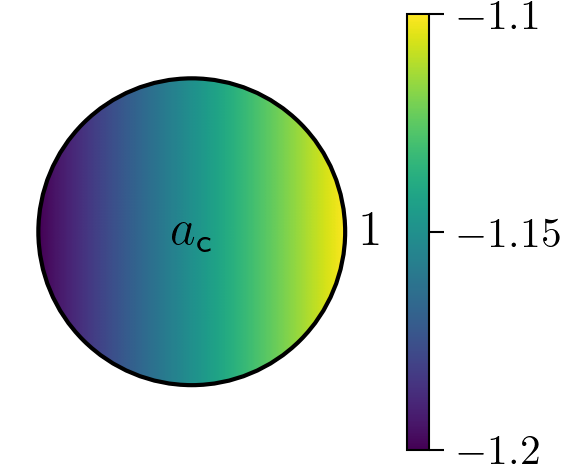}}\hspace{1em}\subfloat[(C) Peanut]{\label{fig:peanut_epsCst_geo}
        \includegraphics{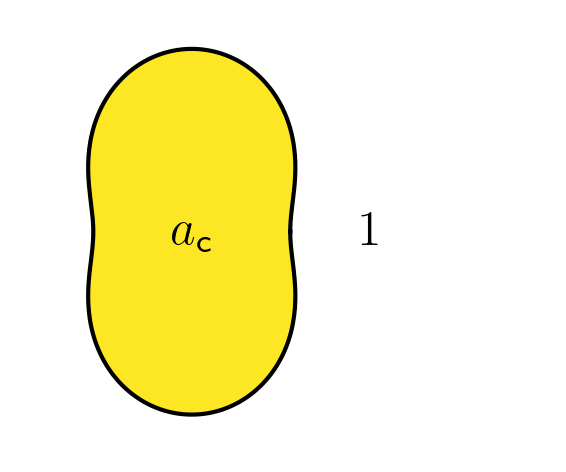}}\caption{Sketch representing the three considered configurations (A), (B), and (C), for the numerical illustration.
    }\label{fig:case_geo}
\end{figure}

\paragraph{\textbf{Step 1: computing resonances}}
In order to solve \cref{eq:ResonanceProblem}, we truncate the computational domain with a circular \emph{perfectly matched layer} (PML) as done in~\cite{PhD_Moitier_2019} (represented in green in \cref{fig:meshes}), and we consider \( \mathtt{T} \)-conforming meshes (ad hoc locally symmetric meshes along the interface \( \Gamma \)) to guarantee FEM optimal convergence and avoid spurious eigenvalues~\cite{CaChCi17, BoCaCi18}.
In practice, we build such meshes using GMSH~\cite{GMSH} and consider quadrangular elements of degree \( 3 \) embedded in a tubular neighborhood as defined in \cref{eq:tubularNeighborhood}.
We build a circular PML with radii \( r_0 = 1.25 ,\, r_1 = r_0 + 0.25 \) for the disk, and \( r_0 = 1.25 \times 1.3 , \, r_1 = r_0 + 0.25 \times 1.3 \) for the peanut.
After those transformations, the scattering resonances are approximated by the eigenvalues of the resulting FEM matrix, at least in a sector below the real axis (where the angle of the sector depends on the PML parameters).

\begin{figure}[!htbp]
    \centering
    \subfloat[Mesh for the disk]{\label{fig:mesh_disk}
        \includegraphics[width=0.25\textwidth]{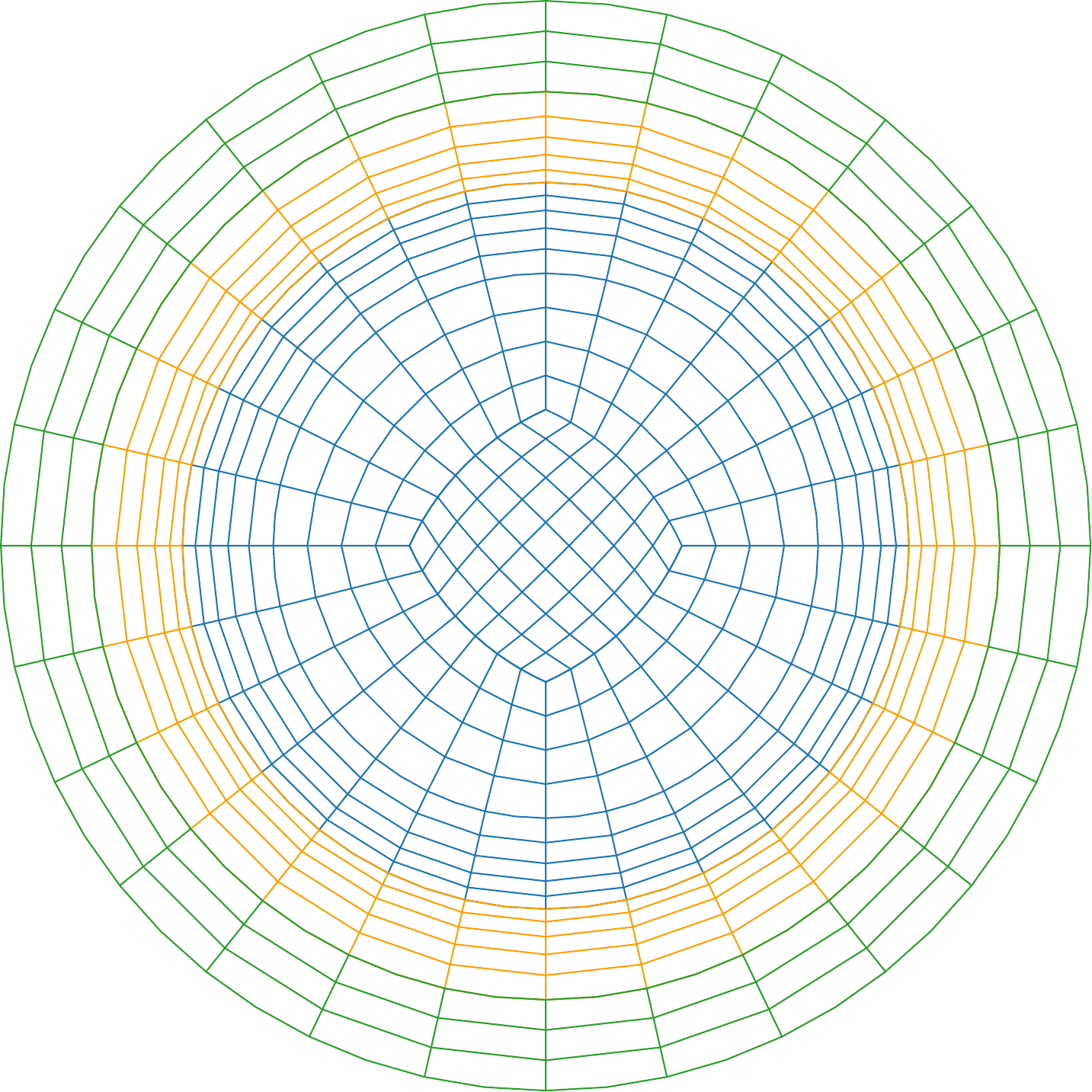}}\hspace{2em}\subfloat[Mesh for the peanut]{\label{fig:mesh_peanut}
        \includegraphics[width=0.25\textwidth]{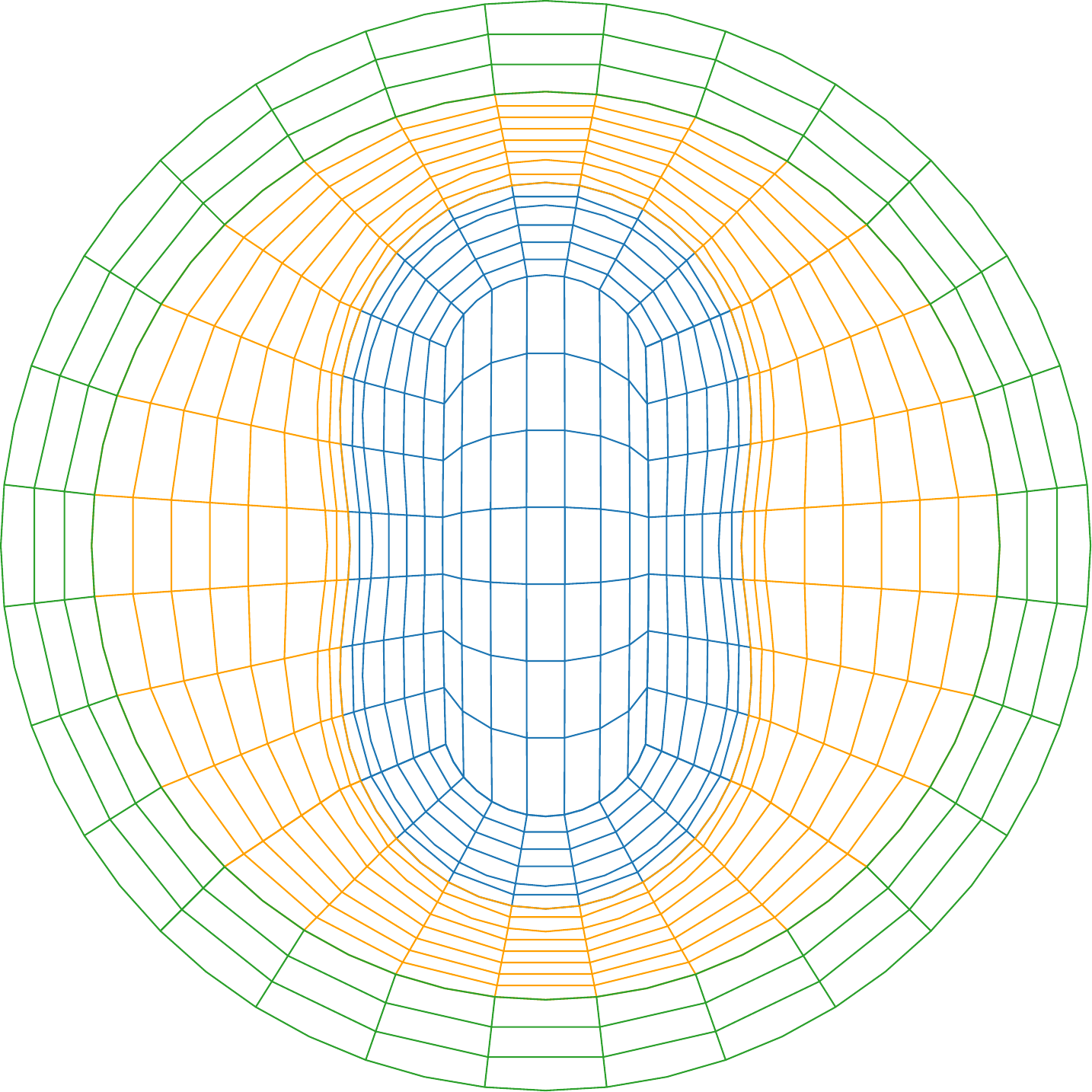}}\caption{Structured mesh for the circular cavity (left) and the peanut shape cavity (right).
        The cavity is represented in blue, the exterior domain in orange, and the PML in green.
        The mesh is locally symmetric along the interface \( \Gamma \).
        Meshes use quadrangular elements of degree \( 3 \), and we use finite elements of degree 8 in the computations (adding degrees of freedom in each element, not represented here).
    }\label{fig:meshes}
\end{figure}

The FEM computations are done using finite elements of degree \( 8 \) using \texttt{XLife++}~\cite{xlifepp}, leading to \( 33\, 713 \) degrees of freedom for all three cases.
\Cref{tab:resonances} contains computed scattering resonances values \( \ell_\fem \) for various numbers of curvilinear oscillations \( m \in \{3, 6, 12\} \), for the three cases.
As mentioned in \cref{rem:sol0_unicity,rem:mul2,rem:two_quasi_mode}, for a given \( m \), there are two resonances.
We plot in \cref{fig:modes} the two associated resonant modes for cases (B) and (C) associated to \( m=12 \).
One can observe that the size of angular oscillations changes when \( \cavc \) varies (case (B)).
Additionally, one observes that associated computed modes exhibit localized behaviors, as induced by surface plasmons waves.

\begin{table}[!htbp]
    \renewcommand{\arraystretch}{1.2}
    \renewcommand{\tabcolsep}{1em}
    \centering
    \begin{tabular}{llll}
        \toprule
        \( \ell_\fem \)
         & \( m = 3 \)
         & \( m = 6 \)
         & \( m = 12 \)
        \\\midrule
        (A)
         & \( 1.1472 - \im 10^{-2} \)
         & \( 2.072 - \im 10^{-3} \)
         & \( 3.89308 - \im 10^{*} \)
        \\
         & \( 1.1472 - \im 10^{-2} \)
         & \( 2.072 - \im 10^{-3} \)
         & \( 3.89308 - \im 10^{*} \)
        \\[1ex]
        (B)
         & \( 0.966 - \im 10^{-1.6} \)
         & \( 2.0681 - \im 10^{-2} \)
         & \( 4.21203 - \im 10^{-5} \)
        \\
         & \( 0.966 - \im 10^{-1.6} \)
         & \( 2.0681 - \im 10^{-2} \)
         & \( 4.21231 - \im 10^{-5} \)
        \\[1ex]
        (C)
         & \( 0.46 - \im 10^{-0.86} \)
         & \( 1.5455 - \im 10^{-1.91} \)
         & \( 3.2954955 - \im 10^{-3.32} \)
        \\
         & \( 0.93 - \im 10^{-1.49} \)
         & \( 1.6912 - \im 10^{-2.65} \)
         & \( 3.2990404 - \im 10^{-4.44} \)
        \\\bottomrule
    \end{tabular}
    \caption{Approximate value of the scattering resonances \( \ell_\fem \) in the three cases and for \( m \in \{3, 6, 12\} \).
        The number of digits displayed is evaluated using an estimated numerical error, and we have put a ``\( * \)'' when the value is below the estimated error.}\label{tab:resonances}
\end{table}

\begin{figure}[!htbp]
    \centering
    \subfloat{\includegraphics{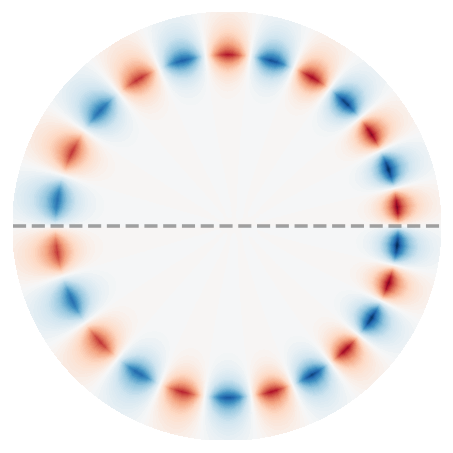}}
    \subfloat{\includegraphics{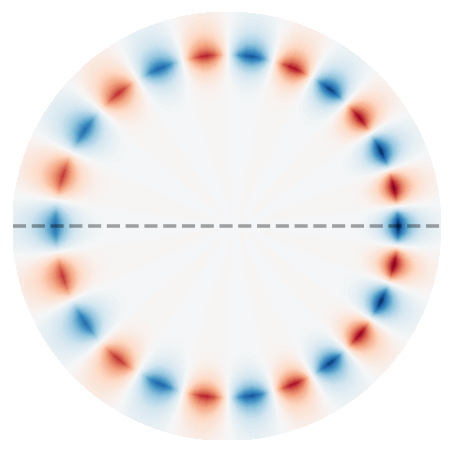}}
    \subfloat{\includegraphics{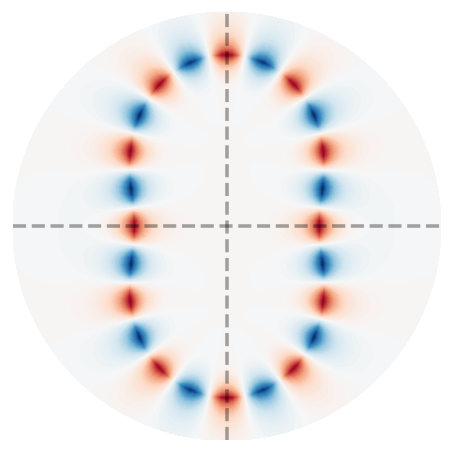}}
    \subfloat{\includegraphics{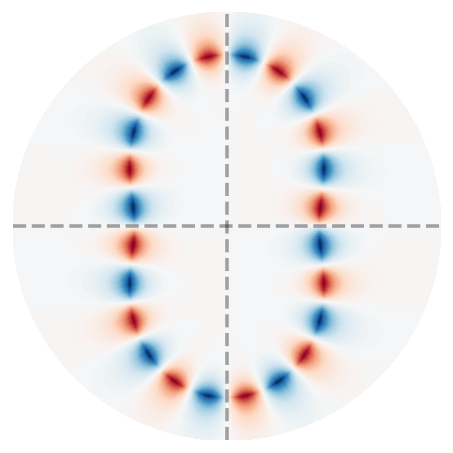}}
    \caption{Real part of the 2 resonant modes associated to \( m = 12 \) curvilinear oscillations, associated to the resonances in \cref{tab:resonances}: for case (B) (left, middle left), for case (C) (middle right, right).
        The gray dashed lines represent the symmetry axes of the problem and hence the symmetries of the modes.
    }\label{fig:modes}
\end{figure}

\paragraph{\textbf{Step 2: norm of the discretized cut-off resolvent operator}}

In \cref{sec:pedago} we computed the discrete norm of the reduced cut-off resolvent operator \( \opnorm{\Ab_k^{-1}}_2 \), obtained using separation of variables.
Here, we compute the discrete norm of a finite element version of the resolvent operator.
We equivalently rewrite \cref{eq:ScatteringProblem} on a bounded domain using a Dirichlet-to-Neumann map (DtN), leading to \cref{eq:ScatteringProblem_bounded} presented in \cref{app:prop_P}.
We use FEM with \( \mathtt{T} \)-conforming meshes such as the ones in \cref{fig:meshes} but without the PML to approximate \cref{eq:ScatteringProblem_bounded}, and we denote \( \mathbb{M}_k \) the finite element matrix of the associated operator.
Then we compute the associated discrete norm \( \opnorm{\mathbb{M}_k^{-1}}_2 \) of the finite element cut-off resolvent operator using the spectral norm by a power method on \( \plr{\mathbb{M}_k^\intercal}^{-1} \mathbb{M}_k^{-1} \) on a uniform \( k \)-grid with geometric refinement around the real part of the scattering resonances.

The FEM computations are done using finite elements of degree 8 (leading to \( 28\, 337 \) degrees of freedom for all three cases), \( 65 \) Fourier modes for the DtN~\cite{Oberai1998}, and \( k \)-grids of \( 160 \) elements for case (A), \( 150 \) elements for cases (B), (C) respectively.

\bigskip

\Cref{fig:norm_ANA_FEM} presents results for case (A), where we can compare \( \opnorm{\mathbb{M}_k^{-1}}_2 \) (dashed orange line) with \( \opnorm{\Ab_k^{-1}}_2 \) (blue line) from the analytic computations in \Cref{sec:pedago}.
Note that {the numerical schemes used in both cases are not the same}, hence we do not expect the results to identically match.
However, the sharp peaks coincide exactly, they occur at \( k = \Re (\ell_\fem) \) (\( \ell_\fem \) being the FEM scattering resonances computed in Step 1), and they exponentially grow as \( k \) increases (the \( y \)-axis is on a logarithmic scale).
The gray vertical lines correspond to the real part of the scattering resonances \( \ell_\fem \).
For larger wavenumbers \( k \), the FEM captures the scattering instabilities, but it fails to capture the peaks' intensity.
This is due to the fact that the mesh is in this case not refined enough (despite high FEM order).

\begin{figure}[!htbp]
    \centering
    \subfloat[Case (A)]{\includegraphics{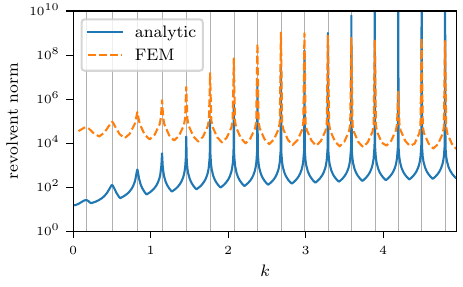}}
    \caption{Semi-log plot of the function \( k \mapsto \norm{\mathbb{M}_{k}^{-1}}_2 \) for the disk cavity with \( \cavc = -1.1 \).
        The blue line correspond to the same analytic computation as in \cref{sec:pedago}.
        The dotted orange lines correspond to FEM computations.
        The vertical grid lines are aligned on the real part of the scattering resonances.
    }\label{fig:norm_ANA_FEM}
\end{figure}

\Cref{fig:N_FEM_2} presents results for cases (B), (C), where we do not have an analytic computation to compare to.
As before, we observe that the norm of the cut-off resolvent operator peaks for \( k = \Re (\ell_\fem) \) (indicated by the gray vertical lines in the figures), and the peaks grow exponentially with respect to \( k \).
As mentioned before, we have two resonant modes corresponding to the same number of curvilinear oscillations \( m \), but they might have slightly different true resonances.
For case (C), we clearly observe this phenomenon (double peaks).
Note that for small \( m \) (\emph{i.e.}\ small real part of the scattering resonances), the norm of the resolvent does not explode.
This is due to scattering resonances having a more significant imaginary part.

\begin{figure}[!htbp]
    \centering
    \subfloat[Case (B)]{\label{fig:disk_epsVar_Ner_pla}
        \includegraphics{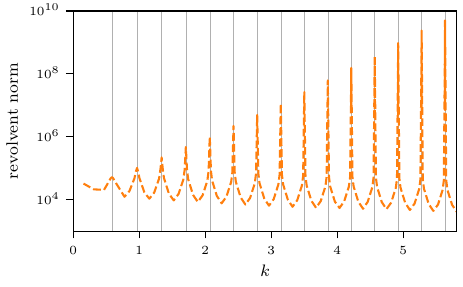}}
    \subfloat[Case (C)]{\label{fig:peanut_epsCon_Ner_pla}
        \includegraphics{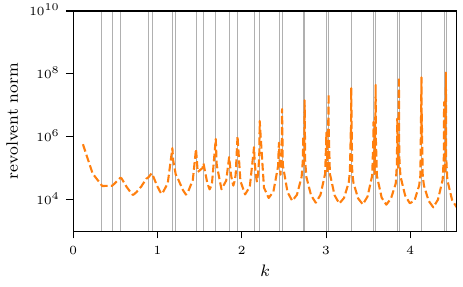}}
    \caption{Semi-log plot of the function \( k \mapsto \norm{\mathbb{M}_{k}^{-1}}_2 \) in logarithmic scale for the two cases \emph{(B)} and \emph{(C)}.
        The vertical grid lines are aligned on the real part of the scattering resonances.}\label{fig:N_FEM_2}
\end{figure}
Numerical results above illustrate the effect of scattering resonances induced by surface plasmons waves, for various metamaterial cavities (in shape and in coefficient).
\paragraph{\textbf{Step 3: comparison with quasi-resonances}}
For validation purposes, we compare the scattering resonances \(\ell_\fem\) computed in Step 1 with \(\sqrt{\lu_m}\), using \(\lu_m\) defined in \cref{eq:lexpan_gen}. In particular, we will compare with the \( \OO(m^{-1})\), the \( \OO(m^{-2})\) expansion, respectively, which corresponds to choosing \cref{eq:lexpan_gen} with one term, \cref{eq:lexpan_gen} with two terms, respectively. We will denote them \( \lu_{m,t} \), \(t = 1,2\). Recall that given \(m\), resonances may be of multiplicity two: in that case we will add the superscript \({}^s\), \(s = 0,1\), to distinguish between the two resonances. We now define the relative difference in scattering resonance with \(t\) terms at order \(m\) by
\[
    D^s_{t}(m) = \abs{\frac{\sqrt{\lu_{m,t}^{s}} - \ell^s_{\fem,m}}{\ell^s_{\fem,m}}},
    \quad  s = 0,1.
\]
Ideally, we expect that \( \lim \limits_{m \to +\infty}D^s_{t}(m) = \OO(m^{-t})\), for \(t = 1,2\).
\Cref{fig:num_vs_asy_diskcst,fig:num_vs_asy} represent \(D^s_{t}(m)\) for cases (A), (B), (C). In the case (A), there is no multiplicity (we drop the superscript \(s\)), and we can use results from \cref{sec:DiskRes} to compare with the analytic scattering resonances \(\ell_m \in \Rc_\pla[-1.1, 1]\). \Cref{fig:num_vs_asy_diskcst} illustrates that ideal behavior is reached. \Cref{fig:num_vs_asy} shows that, for cases (B) and (C), relative differences follows the anticipated slopes, which is promising (especially considering the range of \(m\) that may still be in pre-asymptotic regime). Further analysis of the asymptotic rates could be made (as done in \cite[Chpater 9]{PhD_Moitier_2019}) to verify the asymptotic rates, this requires more computations. Overall, results from \cref{fig:num_vs_asy_diskcst,fig:num_vs_asy} present reasonably small relative error between the scattering resonances and the quasi-resonances.

\begin{figure}[!hbtp]
    \centering
    \subfloat[Relative error for case (A)]{\includegraphics{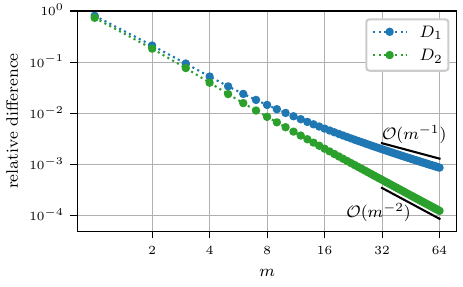}}
    \caption{Relative difference \( D_t(m): = \abs{\frac{\sqrt{\lu_{m,t}} - \ell_{m}}{\ell_{m}}} \), \(t = 1,2\), between the scattering resonances \(\ell_m \in \Rc_\pla[-1.1, 1]\) computed via the modal equation (\cref{eq:detM}), and the asymptotic expansion \( \sqrt{\lu_{m,t}} \) via \cref{eq:lexpan_gen} for case (A).
    }\label{fig:num_vs_asy_diskcst}
\end{figure}

\begin{figure}[!hbtp]
    \centering
    \subfloat[Case (B)]{\includegraphics{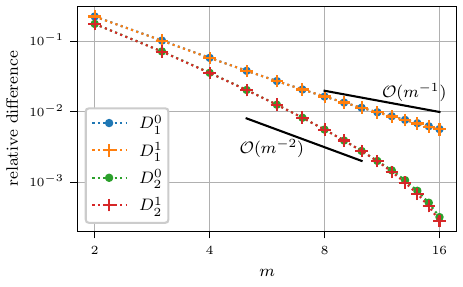}}
    \subfloat[Case (C)]{\includegraphics{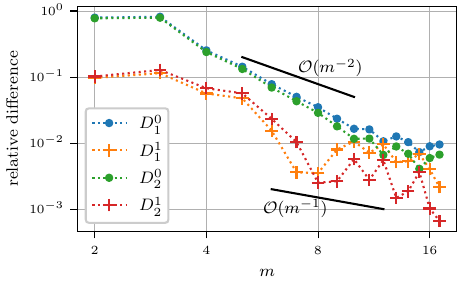}}
    \caption{Graphs of the relative difference \( D_t^s(m) \), \(s = 0,1\), \(t = 1,2\), between the scattering resonances \(\ell^s_{\fem,m} \) computed via FEM (Step 1), and the asymptotic expansion \( \sqrt{\lu^s_{m,t}} \) via \cref{eq:lexpan_gen}, for cases (B) and (C).
    }\label{fig:num_vs_asy}
\end{figure}

\section{Conclusion}\label{sec:conclu}

Similar to classical optical cavities, the scattering by negative metamaterial cavities can be significantly affected by localized waves at the boundary of the cavity.
In this paper we have shown with the black box scattering framework that there exist metamaterial scattering resonances close to the positive real axis, causing the norm of resolvent operator to explode.
Using asymptotic expansions, we have characterized those resonances to arbitrary order, and for various cavity properties (arbitrary smooth shape, varying negative permittivity, etc.).
Numerical experiments illustrate that, for non-trapping metamaterial cavities, scattering resonances are associated to localized waves corresponding to surface plasmons waves.
This study has been carried out without reducing to the quasi-static case, and the considered spectral parameter is the wavenumber in contrast to~\cite{grieser2014plasmonic, Schnitzer2019, Ammari2020, Ammari2020a}.
Our asymptotic analysis revealed that, given some incident source associated to \( k > 0 \), surface plasmon waves can only be excited when \( \cavc < -1 \) (in the case \( -1 < \cavc < 0 \) the scattering resonances are purely imaginary).
We have established that the ``quasimodes to quasi-resonances'' result still applies for unbounded transmission problems with sign-changing coefficients: then the existence of quasi-pairs implies the existence of scattering resonances close to the positive real axis which also implies the explosion of the stability constant when \( \cavc < -1 \).
FEM computations confirm that the norm of the numerical resolvent operator exhibits high intensity narrow peaks associated to the scattering resonances close to the positive real axis.

Our approach provides the construction of surface plasmons waves quasi-modes for general metamaterial cavities, to arbitrary order.
The constructed quasi-modes seem to concur with resonant modes computed for non-trapping cavities.
For trapping cavities, combinations of localized-trapped resonant modes could exist.
The approach can be carried out for multi-layered cavities (typically a dielectric cavity surrounded by an annulus of metamaterial): one can build quasi-pairs for each interface (the localization process decouples phenomena).
Then the ``quasimodes to quasi-resonances'' argument should hold similar results.
One could consider extracting those asymptotic plasmonic behaviors from the problem to relax FEM (no peaks), as done in the singular complement method~\cite{ciarlet2003singular}.
One could also, using the same expansion methods, find asymptotic characterization in the context of dispersive material cavities (in particular the case where \( \cavc \coloneqq \eps_\cav \) is the permittivity and depends on the wavenumber \( k \), such as Drude's or Lorentz' model).
In that case, our analysis confirms that surface plasmons waves can only be excited for frequencies lower than the surface plasmons' frequency~\cite{maier2007plasmonics}, however, since the domain of the operator depends on the spectral parameter, the link between quasi-pairs and scattering resonances is not clear.
Extensions to polygonal metamaterial cavities and dispersive metamaterials will be considered.
In the quasi-static case, the spectral analysis for that case reveals hypersingular plasmonic behaviors and has been well investigated~\cite{HAZARD2017, BHM20-hal}.
The proposed asymptotic expansions approach is valid for arbitrary optical parameter \( \cavc  \) (and complex-valued ones to some extent), one could also consider arbitrary double negative optical parameters \( \cavb \) and work with the double-negative PDE \( -\Div(\lapc^{-1} \, \nabla u) - b \, k^2 \, u = 0 \) (\emph{e.g.}~\cite{BCC12maxwell,Fitzpatrick2018,Ammari2019}).
Then, to deduce from the quasi-pairs existence the presence of scattering resonances becomes difficult because the operator is no longer self-adjoint.
All the derivations have been provided for two-dimensional problems, one could consider three-dimensional cavities.
In particular, results from \cref{app:prop_P} and \cref{sec:resolv_results} hold in \( \Rb^3 \), for smooth interface \(\Gamma\).
The construction of quasi-pairs may be more cumbersome, but it can be adapted using a parameterized tubular region and the use of the mean curvature.

\section*{Acknowledgments}

The authors would like to thank the reviewers for their helpful comments, C.~Tsogka and A.~D.~Kim for their feedback, and M.~Dauge for the fruitful discussions.

\section*{Funding}

This research was supported by the National Science Foundation Grant: DMS-2009366 and by the Deutsche Forschungsgemeinschaft (DFG, German Research Foundation) --- Project-ID 258734477 --- SFB 1173.

\bibliographystyle{abbrvurl}
\bibliography{../../references}

\appendix

\section{Properties of the operator \texorpdfstring{\( P \)}{P}}\label{app:prop_P}

We recall the operator \( P \colon u \mapsto -\Div(\lapc^{-1} \, \nabla u) \).
Given \( \omega \subseteq \Rb^2 \), we define the bilinear form
\begin{equation}\label{eq:form_b}
    b_\omega(u,v) = \displaystyle \int_{\omega} \lapc^{-1} \nabla u \cdot \nabla v \,\di{x}, \quad u,v \in \Hr^1(\omega).
\end{equation}
Then \( b \coloneqq b_{\Rb^2} \) is the associated bilinear form of \( (P, \Dc(P)) \), one can write \( b(u,v) = {(Pu, v)}_{\Lr^2(\Rb^2)} \) for \( u \in \Dc(P) \), \( v \in \Hr^1(\Rb^2) \), and is the associated bilinear form of \cref{eq:ScatteringProblem} for \( \Pt \colon \Hr^1(\Rb^2) \to \Hr^{-1}(\Rb^2) \) (\( b(u, v) = \langle \Pt u, v \rangle \) for \( u, v \in \Hr^{1}(\Rb^2) \), where  \( \langle \cdot, \cdot \rangle \) is the duality bracket \(\Hr^{-1}(\Rb^2) \times \Hr^{1}(\Rb^2)\)).

\begin{lemma}\label{lem:P_domain}
    The domain of \( P \) define by \( \Dc(P) = \setst{u \in \Hr^1(\Rb^2)}{\Div\plr{a^{-1} \nabla u} \in \Lr^2(\Rb^2)} \) is equivalent to
    \[
        \Dc(P) = \setst{u \in \Hr^1\plr{\Rb^2}}{
            \Delta\restr{u}{\Omega} \in \Lr^2(\Omega),\ \Delta\restr{u}{\Rb^2 \setminus \overline{\Omega}} \in \Lr^2\plr{\Rb^2 \setminus \overline{\Omega}},\ \clr{\lapc^{-1} \partial_n u}_\Gamma = 0
        }.
    \]
\end{lemma}

\begin{remark}\label{rem:P_domain}
    Without any assumption on the value of \( \cavc \) on the interface \( \Gamma \), we cannot expect \( \Hr^2 \) regularity up to the interface, see for example \cite[Theorem 1 and 2]{CacPanPos_2019}.
\end{remark}

\begin{proof}
    Let us denote
    \[
        \mathcal{E} = \setst{u \in \Hr^1\plr{\Rb^2}}{
            \Delta\restr{u}{\Omega} \in \Lr^2(\Omega),\ \Delta\restr{u}{\Rb^2 \setminus \overline{\Omega}} \in \Lr^2\plr{\Rb^2 \setminus \overline{\Omega}},\ \clr{\lapc^{-1} \partial_n u}_\Gamma = 0
        }.
    \]
    For the inclusion \( \mathcal{E} \subset \Dc(P) \), take \( u \in \mathcal{E} \), using \( \clr{\lapc^{-1} \partial_n u}_\Gamma = 0 \) and Green's identity in a distributional sense, we have
    \[
        \Div\plr{\lapc^{-1}\, \nabla u} = \begin{dcases}
            \Div\plr{\cavc^{-1} \nabla \restr{u}{\Omega}}       & \text{in } \Omega
            \\
            \Delta \restr{u}{\Rb^2 \setminus \overline{\Omega}} & \text{in } \Rb^2 \setminus \overline{\Omega}
        \end{dcases}
    \]
    and with \( \Div(\cavc^{-1} \nabla \restr{u}{\Omega}) = \cavc^{-1} \Delta \restr{u}{\Omega} + \nabla(\cavc^{-1}) \cdot \nabla \restr{u}{\Omega} \), it gives \( \Div\plr{\lapc^{-1}\, \nabla u} \in \Lr^2(\Rb^2) \).
    Therefore, we obtain \( \mathcal{E} \subset \Dc(P) \).
    For the reciprocal inclusion, let's take \( u \in \Dc(P) \) and \( v \in \Hr^1(\Rb^2) \), using Green's identity and duality bracket, we get
    \[
        \left\langle \clr{\lapc^{-1} \partial_n u}_\Gamma,\, \restr{v}{\Gamma} \right\rangle_{\Hr^{-1/2}(\Gamma),\, \Hr^{1/2}(\Gamma)}
        = b_{\Rb^2}(u,\, v) - \plr{P u, v}_{\Lr^2(\Rb^2)} = 0
    \]
    which gives \( \clr{\lapc^{-1} \partial_n u}_\Gamma = 0 \).
    With \( \Div\plr{a^{-1} \nabla u} \in \Lr^2(\Rb^2) \), we get \( \Delta\restr{u}{\Omega} = \cavc [\Div(\cavc^{-1} \nabla \restr{u}{\Omega}) - \nabla(\cavc^{-1}) \cdot \nabla \restr{u}{\Omega}] \in \Lr^2(\Omega) \) and \( \Delta \restr{u}{\Rb^2 \setminus \overline{\Omega}} \in \Lr^2(\Rb^2 \setminus \overline{\Omega}) \).

\end{proof}

\begin{lemma}\label{lem:b_wT_coercif}
    If \( \cavc(\gamma) \neq -1 \), for all \( \gamma \in \Gamma \), the bilinear form \( b_\omega \) defined in \cref{eq:form_b} is weakly \( \mathtt{T} \)-coercive.
    More precisely, there exists an isomorphism \( \mathtt{T} \in \mathcal{L}(\Hr^1(\omega)) \), a compact operator \( \Ct \in \mathcal{L}(\Lr^2(\omega)) \){, \( \alpha > 0 \), and \( \beta \in \Rb \)} such that \( b_\omega \) satisfies a G{\"a}rding's inequality of the form:
    \[
        b_\omega(u, \mathtt{T} u) \geq \alpha \norm{u}^2_{\Hr^1(\omega)} - \beta  \norm{ \Ct u}^2_{\Lr^2(\omega)},
        \quad \forall u \in \Hr^1(\omega).
    \]
\end{lemma}
\begin{proof}
    When \( \cavc < 0 \) is constant, one can use \( \mathtt{T} \) provided in~\cite{BoCaCi18} and the proof follows the one of~\cite[Lemma 2]{BoCaCi18}.
    When \( \cavc \in \Cs^\infty(\Omega) \) non-constant, since \( \partial \Omega \) is a smooth interface, it can always be seen as locally straight, then Theorems 3.10 and 4.3 in~\cite{BoChCi12} apply and provide the needed results.
\end{proof}

\begin{lemma}\label{lem:Scat_pb_wellposed}
    If \( \cavc(\gamma) \neq -1 \), for all \( \gamma \in \Gamma \), the operator \( \Pt \) is Fredholm of index 0 and \cref{eq:ScatteringProblem} is well-posed.
    Moreover, there exists a stability constant \( C(k) > 0 \) such that
    \begin{equation}\label{eq:esti_sc_gen}
        \norm{u}_{\Lr^2(\Db(0,\rho))} \le C(k) \plr{\norm{f}_{\Lr^2(\Rb^2)} + \norm{g}_{\Lr^2(\Gamma)}},
    \end{equation}
    for any open disk of radius \( \rho \) such that \( \overline{\Omega} \cup \supp(f) \subset \Db(0,\rho) \).
\end{lemma}
\begin{proof}
    Let \( \mathbb{D}(0, \rho) \) be a disk a radius \( \rho \) such that \( \Omega \) is compactly embedded in \( \Db(0, \rho) \), and \( f \in \Lr^2(\mathbb{D}(0, \rho)) \).
    Following~\cite{BCCC16}, we use a Dirichlet-to-Neumann map, denoted \( \mathcal{S} \), to rewrite \cref{eq:ScatteringProblem} in \( \mathbb{D}(0, \rho) \): Find \( {u} \in \Hr^1(\mathbb{D}(0, \rho) ) \) such that
    \begin{align}
        \begin{dcases}
            -\Div\plr{\lapc^{-1}\, \nabla u} - k^2 u = f
             & \text{in } \mathbb{D}(0, \rho)
            \\[0.5ex]
            \clr{u}_\Gamma = 0 \quad\text{and}\quad \clr{\lapc^{-1}\, \partial_n u}_\Gamma = g
             & \text{across } \Gamma
            \\[0.5ex]
            \partial_r u  = \mathcal{S}u
             & \text{on } \partial \mathbb{D}(0, \rho)
        \end{dcases}\label{eq:ScatteringProblem_bounded}
    \end{align}
    Lemma 1 in~\cite{BCCC16} shows that problems \cref{eq:ScatteringProblem_bounded}-\cref{eq:ScatteringProblem} admits at most one solution. Following~\cite[Section 2]{BCCC16}, using the properties of \( \mathcal{S} \) and the fact that \( \Kt \colon u \mapsto  - k^2 u \) is compact, one simply needs to establish that the operator \( \Pt \colon u \mapsto -\Div\plr{ \lapc^{-1} \, \nabla u } \) is Fredholm to conclude.
    From~\cite[Proposition 2.6]{carvalho2015etude}, it is equivalent to show that \( b|_{\mathbb{D}(0, \rho)} \) in \cref{eq:form_b} is weakly \( \mathtt{T} \)-coercive, which is established by \cref{lem:b_wT_coercif}.
    Well-posedness of \cref{eq:ScatteringProblem_bounded} in Hadamard's sense gives u that there exists \( \widetilde{C}(k) > 0 \) such that
    \[
        \norm{u}_{\Hr^1(\mathbb{D}(0, \rho))} \leq \widetilde{C}(k) \plr{\norm{g}_{\Lr^2(\Gamma)} + \norm{f}_{\Lr^2(\Db(0, \rho))}}.
    \]
    For \cref{eq:ScatteringProblem}, using Poincaré's inequality this leads to
    \begin{equation}\label{eq:wp_estimate}
        \norm{u}_{\Lr^2(\mathbb{D}(0, \rho))}
        \leq C(k) \plr{\norm{g}_{\Lr^2(\Gamma)} + \norm{f}_{\Lr^2(\Rb^2)}}.
    \end{equation}
\end{proof}

\begin{lemma}\label{lem:P_spectral}
    If \( \cavc(\gamma) \neq -1 \), for all \( \gamma \in \Gamma \), then \( (P, \Dc(P)) \) is self-adjoint, and its spectrum is such that \( \spess(P) = \Rb_+ \) and \( \spdis(P) \subset \Rb_-^* \).
\end{lemma}
\begin{proof}
    The proof is given by applying Theorem 4.2, Propositions 4.5 and 4.6 in~\cite[Chapter 4]{carvalho2015etude}.
    Consider \( \lambda \in \Cb \setminus \Rb \) and the problem: Find \( u \in \Hr^1(\Rb^2) \) such that \( b(u,v)- \lambda {(u, v)}_{\Lr^2} = {(f, v)}_{\Lr^2} \), \( \forall v \in \Hr^1(\Rb^2) \), with \( f \in \Lr^2(\Rb^2) \).
    Using \cref{lem:b_wT_coercif}, \( b \) is weakly \( \mathtt{T} \)-coercive and the above problem is well-posed (\cref{lem:Scat_pb_wellposed}). This shows that \( P \) is self-adjoint. Given \( \lambda \in \spess(P) \), consider \( {(u_n)}_n \in \Dc(P) \) such \( \norm{u_n}_{\Lr^2(\Rb^2)} = 1 \), \( u_n \rightharpoonup 0 \) weakly in \( \Lr^2 \) and such that \( \norm{Pu_n - \lambda u_n}_{\Lr^2} \to 0 \). Using \cref{lem:b_wT_coercif}, we have
    \[
        \plr{Pu_n, \Tt u_n}_{\Lr^2(\Rb^2)} \geq \alpha \norm{u_n}^2_{\Hr^1(\Rb^2)} - \beta  \norm{ \Ct u_n}^2_{\Hr^1(\Rb^2)} \geq  - \beta  \norm{ \Ct u_n}^2_{\Lr^2(\Rb^2)},
    \]
    and we note that
    \[
        \plr{Pu_n, \Tt u_n}_{\Lr^2(\Rb^2)} = \lambda {(u_n,\Tt u_n)}_{\Lr^2}  = \lambda + \lambda (u_n, (\Tt -\Ir)u_n)_{\Lr^2(\Rb^2)}.
    \]
    Since \( u_n \rightharpoonup 0 \) weakly in \( \Lr^2 \), one can show that \(\norm{ \Ct u_n}^2_{\Lr^2(\Rb^2)} \to 0 \), \((u_n, (\Tt -\Ir)u_n)_{\Lr^2(\Rb^2)} \to 0\) strongly, which leads to \( \lambda \geq 0 \).
    On the other hand, for \( \lambda \geq 0 \), one can build a Weyl sequence \( {(u_n)}_n \in \Dc(P) \) such \( \norm{u_n}_{\Lr^2(\Rb^2)} = 1 \), \( u_n\rightharpoonup 0 \) weakly in \( \Lr^2 \) and such that \( \norm{Pu_n - \lambda u_n}_{\Lr^2} \to 0 \).
    Rellich lemma allows us to show that there are no eigenvalues in \( \spess(P) \).
    Finally, \( P \) doesn't admit a lower bound (details can be found in~\cite[Section 4.2.2]{carvalho2015etude}): one can consider a sequence \( {(u_n)}_n \in \Dc(P) \) with support strictly included in \( \Omega \) such that the numerical range \( {(P u_n, u_n)}_{\Lr^2} \to -\infty \) (recall that \( \cavc <0 \)), which shows that \( \spdis(P) \subset \Rb_-^* \).
\end{proof}

\section{Proofs and additional results for the asymptotic expansions}\label{app:sc_asy_details}

\subsection{Proof of \texorpdfstring{\cref{lem:sol0}}{lem:sol0}}\label{sc_asy_details_0}

\begin{proof}
    We solve \cref{eq:prob_P0} as ordinary differential equations with \( s \in \Tb_L \) as a parameter.
    The conditions \( \vp_0^\pm(s, \cdot) \in \Ss(\Rb_\pm) \) give the following restrictions \( {\theta_0'(s)}^2 + {\eta_0(s)}^2\ls_0 \in \Cb \setminus \Rb_- \) and \( {\theta_0'(s)}^2 - \ls_0 \in \Cb \setminus \Rb_- \).
    If one of the above restrictions is false, then there are no solutions \( \vp^\pm(s, \cdot) \) in \( \Ss(\Rb_\pm) \).
    Under those restrictions, there exists \( \alpha(s), \beta(s) \in \Rb \) such that \( \alpha(s)\beta(s) \neq 0 \),
    \[
        \vp_0^-(s, \rho) = \alpha(s) e^{ \rho \sqrt{{\theta_0'(s)}^2 + {\eta_0(s)}^2\ls_0} },
        \quad \text{and} \quad
        \vp_0^+(s, \rho)  = \beta(s) e^{ -\rho \sqrt{{\theta_0'(s)}^2 - \ls_0} },
    \]
    where the square roots are chosen to be in \( \Cb^{\frac{1}{2}} \).
    The first transmission condition \( \vp_0^-(s,0) = \vp_0^+(s,0) \) implies that \( \alpha(s) = \beta(s) \).
    Then the second transmission condition
    \[
        -{\eta_0(s)}^{-2}\, \partial_\rho\vp_0^-(s,0) = \partial_\rho\vp_0^+(s,0)
    \]
    give us
    \[
        -{\eta_0(s)}^{-2} \sqrt{{\theta_0'(s)}^2 + {\eta_0(s)}^2\ls_0}
        = -\sqrt{{\theta_0'(s)}^2-\ls_0},
    \]
    leading to the eikonal equation
    \[
        {\theta_0'(s)}^2 = \frac{\ls_0}{1-{\eta_0(s)}^{-2}}
        = \seta\ls_0\, \abs{ 1-{\eta_0(s)}^{-2} }^{-1}.
    \]
    While this equation does not have a unique solution, one simply selects one (see \cref{rem:sol0_unicity}).
    Here we choose
    \[
        \theta_0(s) = \sqrt{\seta\ls_0} \int_0^s \abs{ 1-{\eta_0(t)}^{-2} }^{-\frac{1}{2}} \di{t}
    \]
    and from the condition \( \exp\plr{\frac{\im}{h} \theta_0} \in \Cs^\infty(\Tb_L) \), we deduce that \( \exp\plr{\frac{\im}{h} \theta_0(L)} = \exp\plr{\frac{\im}{h} \theta_0(0)} \) which implies that there exists \( m \in \Nb \) such that
    \[
        2\pi m = \frac{ \theta_0(L) - \theta_0(0) }{h}
        = \frac{\sqrt{\seta\ls_0}}{h} \int_0^L \abs{ 1-{\eta_0(t)}^{-2} }^{-\frac{1}{2}} \di{t}.
    \]
    By choosing \( h = \frac{L}{2\pi m} \) for \( m \in \Nb^* \), we get
    \( 1 = \sqrt{\seta\ls_0} \alr{ \abs{1-\eta_0^{-2}}^{-\frac{1}{2}} }
    = \sqrt{\seta\ls_0} \alr{\ftz} \)
    which gives \( \ls_0 = \seta \alr{\ftz}^{-2} \).
    Then with the relation \( \ftz^2 = \varsigma {(1-\eta_0^{-2})}^{-1} \) we obtain that
    \[
        \sqrt{{\theta_0'(s)}^2 + {\eta_0(s)}^2\ls_0} = \htz(s)\, \eta_0(s) > 0
        \quad \text{and} \quad
        \sqrt{{\theta_0'(s)}^2-\ls_0} = \htz(s)\, {\eta_0(s)}^{-1} > 0,
    \]
    which concludes the proof.
\end{proof}

\subsection{Proof of \texorpdfstring{\cref{lem:soln}}{lem:soln}}\label{sc_asy_details_n}

\begin{proof}
    For \( (s, \rho) \in \Tb_L \times \Rb_\pm \), we define \( \e^\pm(s, \rho) = \exp\plr{ -|\rho|\ \htz(s)\, {\eta_0(s)}^{\mp 1} } \).
    We proceed by induction on \( n \).
    For \( n = 0 \), \cref{lem:sol0} gives \( (\vp_0^\pm, \theta_0, \ls_0) \) the solution of \( (\Pc_0) \) defined in \cref{eq:prob_P0}.
    Let \( n \ge 1 \), from the definition of \( S_{n-1}^\pm \) in \cref{eq:Spm}, there exists \( Q_{n-1}^\pm \in \Cs^\infty(\Tb_L, \Pb) \) such that \( S_{n-1}^\pm = Q_{n-1}^\pm \, \e^\pm \).
    Using Lemma A.1 in~\cite{BalDauMoi21}, we can solve the two ODEs in \cref{eq:prob_Pn_expand} with the source terms \( S_{n-1}^\pm \). We find that there exists \( \widetilde{P}_n^\pm \in \Cs^\infty(\Tb_L, \Pb) \) such that \( \widetilde{\vp}_n^\pm = \rho\widetilde{P}_n^\pm \, \e^\pm \), \( \partial_\rho^2 \widetilde{\vp}_n^- -\htz^2\, \eta_0^2\, \widetilde{\vp}_n^- = \eta_0^2\,S_{n-1}^- \), and \( \partial_\rho^2 \widetilde{\vp}_n^+ - \htz^2\, \eta_0^{-2}\, \widetilde{\vp}_n^+ = -S_{n-1}^+ \).
    Then, solving the two ODEs in \cref{eq:prob_Pn_expand} with the source terms \( (2\htz\theta_0' + \eta_0^2\ls_n) \vp_0^- \) and \( (2\htz\theta_0' - \ls_n) \vp_0^- \), for \( (s, \rho) \in \Tb_L \times \Rb_\pm \), we obtain
    \begin{subequations}\label{eq:phi_expr}
        \begin{align}
            \vp_n^-(s, \rho) & = \alpha(s)\rho\plr{\frac{\eta_0(s)\, \ls_n}{2\htz(s)} + \frac{\theta_n'(s)}{\eta_0(s)} + \frac{\widetilde{P}_n^-(s, \rho)}{\alpha(s)}} \, \e^-(s, \rho),
            \\
            \vp_n^+(s, \rho) & = \alpha(s)\rho\plr{\frac{\eta_0(s)\, \ls_n}{2\htz(s)} - \eta_0(s)\, \theta_n'(s) + \frac{\widetilde{P}_n^+(s, \rho)}{\alpha(s)}} \, \e^+(s, \rho).
        \end{align}
    \end{subequations}
    The first transmission condition \( \vp_n^-(\cdot, 0) = \vp_n^+(\cdot, 0) \) is satisfied because \( \vp_n^\pm(\cdot, 0) = 0 \).
    Using the second transmission condition \( -\eta_0^{-2}\, \partial_\rho\vp_n^-(\cdot, 0) = \partial_\rho\vp_n^+(\cdot, 0) \) and the expressions in \cref{eq:phi_expr}, we get
    \[
        -\eta_0^{-2} \plr{ \frac{\eta_0(s)\, \ls_n}{2\htz(s)} + \frac{\theta_n'(s)}{\eta_0(s)} + \frac{\widetilde{P}_n^-(s, 0)}{\alpha(s)} }
        = \frac{\eta_0(s)\, \ls_n}{2\htz(s)} - \eta_0(s)\, \theta_n'(s) + \frac{\widetilde{P}_n^+(s, 0)}{\alpha(s)}.
    \]
    Solving for \( \theta_n' \) and integrating yields
    \[
        \theta_n(s) = \int_0^s \frac{\ls_n}{2\htz(t) (1-{\eta_0(t)}^{-2})} + \frac{\eta_0(t) \widetilde{P}_n^-(t,0) + {\eta_0(t)}^3\widetilde{P}_n^+(t,0)}{\alpha(t)\, ({\eta_0(t)}^4 - 1)} \di{t}.
    \]
    Now, the condition \( \exp(\im \, h^{n-1}\, \theta_n) \in \Cs^\infty(\Tb_L) \) imposes \( \theta_n(L) = \theta_n(0) \), solving for \( \ls_n \) and using the relation \( {\ftz(t)}^2\, (1-\eta_0^{-2}) = \seta \) yields
    \[
        \ls_n = -\frac{2\, \varsigma}{\alr{\ftz}^2}\ \alr{\frac{\eta_0 \widetilde{P}_n^-(\cdot, 0) + \eta_0^3\widetilde{P}_n^+(\cdot, 0)}{\alpha \, (\eta_0^4 - 1)}}.
    \]
    Setting \( P_n^\pm(s, \rho)  = \alpha(s)\rho\plr{\frac{\eta_0(s)\, \ls_n}{2\htz(s)} \mp {\eta_0(s)}^{\pm 1} \theta_n'(s) + \frac{\widetilde{P}_n^\pm(s, \rho)}{\alpha(s)}} \) finishes the proof.
\end{proof}

\subsection{Additional results for Schwartz functions}\label{sc_lemma_schwartz}

\begin{lemma}\label{lem:int_hoo}
    Consider \( F \colon (h; s, \rho) \mapsto F(h; s, \rho) \) in \( \Cs^\infty([0,\frac{L}{2\pi}] \times \Tb_L, \Ss(\Rb_\pm)) \), \( \rho > 0 \), and the intervals \( I_-(h) = (-\infty, -\frac{\rho}{h}) \) and \( I_+(h) = (\frac{\rho}{h}, +\infty) \).
    Then
    \[
        \int_{\Tb_L} \int_{I_\pm(h)} |F(h; s, \rho)|^2 \di{\rho}\di{s} = \OO(h^\infty) \quad \text{as } h \to 0.
    \]
\end{lemma}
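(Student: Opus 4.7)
The plan is to exploit the defining feature of Schwartz-valued smooth functions on a compact parameter domain: all Schwartz seminorms, taken uniformly in the parameters, are finite. Concretely, since $[0, \tfrac{L}{2\pi}] \times \Tb_L$ is compact and $F \in \Cs^\infty([0, \tfrac{L}{2\pi}] \times \Tb_L, \Ss(\Rb_\pm))$, for every integer $N \ge 1$ there exists a constant $C_N > 0$ such that
\[
    (1 + |\sigma|)^{2N}\, |F(h; s, \sigma)|^2 \le C_N, \qquad \forall (h, s, \sigma) \in [0, \tfrac{L}{2\pi}] \times \Tb_L \times \Rb_\pm.
\]
This is the content of the first step: invoke the continuity of the seminorm $F \mapsto \sup_\sigma (1+|\sigma|)^N |F(\cdot, \cdot, \sigma)|$ as a map into $\Cs(\,[0, \tfrac{L}{2\pi}] \times \Tb_L)$, which is a standard fact about vector-valued smooth functions with values in a Fr\'echet space.

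The second step is a direct tail-integration estimate. For $h \in (0, \tfrac{L}{2\pi}]$ and $N \ge 1$,
\[
    \int_{I_\pm(h)} (1+|\sigma|)^{-2N} \di{\sigma} \le \int_{\rho/h}^{+\infty} \sigma^{-2N} \di{\sigma} = \frac{1}{2N-1}\, \lp\frac{\rho}{h}\rp^{-(2N-1)} = \frac{h^{2N-1}}{(2N-1)\, \rho^{2N-1}}.
\]
Combining with the uniform bound above and integrating in $s$ over $\Tb_L$ (which contributes a factor of $L$), we obtain
\[
    \int_{\Tb_L} \int_{I_\pm(h)} |F(h; s, \sigma)|^2 \di{\sigma} \di{s} \le \frac{L\, C_N}{(2N-1)\, \rho^{2N-1}}\, h^{2N-1}.
\]
Since $N \ge 1$ is arbitrary, the left-hand side is $\Oo(h^{2N-1})$ for all $N$, hence $\Oo(h^\infty)$ as $h \to 0$.

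There is no real obstacle here; the only subtle point is the passage from ``$F(h; s, \cdot)$ is Schwartz for each $(h,s)$'' to ``the Schwartz seminorms are bounded uniformly in $(h,s)$.'' This follows at once from compactness of $[0, \tfrac{L}{2\pi}] \times \Tb_L$ together with the fact that $F$ is continuous into the Fr\'echet space $\Ss(\Rb_\pm)$, so that the image is bounded (equivalently, seminorm-bounded) in $\Ss(\Rb_\pm)$. Once that observation is in place the estimate is a one-line computation.
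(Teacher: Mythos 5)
Your proof is correct and follows essentially the same approach as the paper's: use compactness of the parameter domain to get uniform control of the Schwartz seminorms, then integrate the tail $\int_{\rho/h}^{\infty}\sigma^{-2N}\di{\sigma}$. The only cosmetic difference is that you weight by $(1+|\sigma|)^{2N}$ where the paper uses $\sigma^N$, but the estimate and its consequence are identical.
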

\begin{proof}
    Notice that, for any integer \( N \ge 1 \), there exists a constant \( C_N > 0 \) such that \( |\rho^N F(h; s, \rho)| \le C_N \) for all \( (h; s, \rho) \in [0,\frac{L}{2\pi}] \times \Tb_L \times \Rb_\pm \).
    Hence,
    \[
        \int_{\Tb_L} \int_{I_\pm(h)} |F(h; s, \rho)|^2 \di{\rho}\di{s} \le \frac{C_N\, L}{(2N-1)\, \rho^{2N-1}}\ h^{2N-1},
    \]
    which finishes the proof.
\end{proof}

\subsection{Additional results used in \texorpdfstring{\cref{sec:construct_quasi}}{~\ref{sec:construct_quasi}}}\label{sec:theta_1}

\begin{lemma}\label{lem:theta_1}
    For \( s \in \Tb_L \),
    \begin{multline*}
        \theta_1(s) = \int_0^s \frac{\ls_1}{\ls_0} \htz(t)
        + \frac{({\eta_0(t)}^2 - 1)\, \kappa(t)}{2\, \eta_0(t)}
        + \frac{\eta_1(t)}{2\, {\eta_0(s)}^2\, ({\eta_0(t)}^2-1)}\\
        + \im\frac{({\eta_0(t)}^4+3)\, \eta_0'(t)}{2\, \eta_0(t)\, ({\eta_0(t)}^4-1)}
        + \im\frac{\alpha'(t)}{\alpha(t)} \di{t}.
    \end{multline*}
\end{lemma}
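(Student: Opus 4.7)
The plan is to specialize the general formula for $\theta_n$ that fell out of the proof of \cref{lem:soln} to $n=1$, and then compute the one remaining ingredient $\wt{P}_1^\pm(\,\cdot\,, 0)$ explicitly. From that proof one reads off
\begin{equation*}
    \theta_1(s) = \int_0^s \lc \frac{\ls_1}{2\htz(t)\,(1-\eta_0(t)^{-2})} + \frac{\eta_0(t)\,\wt{P}_1^-(t,0) + \eta_0(t)^3\,\wt{P}_1^+(t,0)}{\alpha(t)\,(\eta_0(t)^4-1)} \rc \di{t},
\end{equation*}
so the whole task reduces to evaluating $\wt{P}_1^\pm(s, 0)$ in closed form and then simplifying the integrand using the identities stated in \cref{not:asy_expan}.

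First, I would assemble $S_0^\pm$ from \cref{eq:Spm}. For $n = 1$ almost every summand vanishes, leaving $S_0^\pm = -\Obf_1^{1,\pm}(\vp_0^\pm) - \Obf_1^{2,\pm}(\vp_0^\pm, \theta_0) - \Obf_1^{3,\pm}(\vp_0^\pm, \theta_0, \theta_0)$. Each $\Obf_1^{j,\pm}$ is obtained by Taylor expanding the coefficients of $\Lc_h^j[\eps]$ to first order in $\xi = h\sigma$, using $g^{-j}(s, h\sigma) = 1 - j\,h\sigma\,\kappa(s) + \Oo(h^2)$ and $\eta^{\mp 2}(s, h\sigma) = \eta_0^{\mp 2} \mp 2h\sigma\,\eta_1\,\eta_0^{\mp 3} + \Oo(h^2)$, and collecting the coefficient of $h$. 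This produces $\sigma$-linear pieces (from curvature and normal-derivative expansions) together with tangential-derivative contributions $\partial_s\vp_0^\pm$ coming from $\Obf_1^{2,\pm}$, which carries the $\ic$ prefactor visible in \eqref{eq:opL2}; when those are evaluated on the explicit ansatz $\vp_0^\pm(s,\sigma) = \alpha(s)\,\exp(-|\sigma|\,\htz(s)\,\eta_0(s)^{\mp 1})$ of \cref{lem:sol0}, they generate precisely the $\alpha'/\alpha$ and $\eta_0'$ terms appearing in the target.

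Second, I would extract $\wt{P}_1^\pm(s, 0)$ by plugging the ansatz $\wt{\vp}_1^\pm = \sigma\,\wt{P}_1^\pm(s,\sigma)\,\exp(-|\sigma|\htz\eta_0^{\mp 1})$ into the source equation $(\partial_\sigma^2 - \htz^2\eta_0^{\pm 2})\wt{\vp}_1^\pm = \pm \eta_0^{\pm 2}\,S_0^\pm$ on each half-line (as in \cref{eq:prob_Pn_expand}), using $\partial_\sigma\exp(-|\sigma|\htz\eta_0^{\mp 1}) = -\mathrm{sgn}(\sigma)\,\htz\,\eta_0^{\mp 1}\,\exp(-|\sigma|\htz\eta_0^{\mp 1})$ on $\Rb_\pm^*$. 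After factoring out the exponential, matching powers of $\sigma$ yields a small linear system for the polynomial coefficients of $\wt{P}_1^\pm$, of which only the value at $\sigma = 0$ matters. Substituting $\wt{P}_1^\pm(s, 0)$ into the displayed formula and simplifying by means of the identity $\htz^2(1-\eta_0^{-2}) = \ls_0$ (which follows from $\ls_0 = \seta\la\ftz\ra^{-2}$, $\ftz^2 = |1-\eta_0^{-2}|^{-1}$, $\htz = \ftz/\la\ftz\ra$, $\seta = \mathrm{sgn}(1-\eta_0^{-2})$) together with $\eta_0^4-1 = (\eta_0^2-1)(\eta_0^2+1)$, the five terms of the claimed expression for the integrand of $\theta_1$ fall out one by one: $\ls_1/\ls_0$ carries the resonant contribution, the $\kappa$ term traces back to the first-order correction in $g^{-j}$, the $\eta_1$ term to the normal Taylor expansion of $\eta$, and the two imaginary terms (proportional to $\eta_0'$ and $\alpha'/\alpha$) to the $\ic$-multiplied tangential derivatives in $\Obf_1^{2,\pm}$.

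The hard part is not the structure but the bookkeeping of the second and third steps. Computing $\Obf_1^{j,\pm}(\vp_0^\pm, \theta_0, \ldots)$ produces many terms, and combining the $\pm$ sides through the transmission-condition ratio $[\eta_0\wt{P}_1^-(s,0) + \eta_0^3\wt{P}_1^+(s,0)]/(\eta_0^4-1)$ forces a number of cancellations driven by the jump of $\eps^{-1}$ across $\Gamma$. No new ideas are needed—the formula is essentially dictated by the preceding construction—but careful tracking of signs, of powers of $\eta_0$, and of $\ic$ factors is required to reach the stated closed form.
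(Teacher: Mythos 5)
Your approach is exactly the paper's: the published proof is the one-line remark ``This follows from \cref{eq:prob_Pn_expand} and \cref{eq:Spm},'' i.e.\ specialize the recursion from \cref{lem:soln} to $n=1$ and compute, which is precisely what you lay out. Your identification of $S_0^\pm = -\Obf_1^{1,\pm}(\vp_0^\pm) - \Obf_1^{2,\pm}(\vp_0^\pm, \theta_0) - \Obf_1^{3,\pm}(\vp_0^\pm, \theta_0, \theta_0)$ is the correct specialization (if one tries to read it off literally from the indexing in \cref{eq:Spm} one also picks up $\Obf_0^{3,\pm}(\vp_0^\pm,\theta_0,\theta_0)$, which cannot be right since that term is already consumed in $(\Pc_0)$; your version is what the splitting of $(\Pc_1)$ into unknown/known terms actually produces). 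The attribution of the five contributions — resonant, curvature, normal Taylor of $\eta$, and the two imaginary terms from the $\ic$-prefixed tangential derivative in $\Lc_h^2$ — is structurally sound.

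One concrete discrepancy you glide past: you invoke the identity $\htz^2(1-\eta_0^{-2}) = \ls_0$ (which is correct) to turn the first term of the $\theta_n'$ formula from the proof of \cref{lem:soln}, namely $\ls_1 / \bigl(2\htz\,(1-\eta_0^{-2})\bigr)$, into the stated $(\ls_1/\ls_0)\,\htz$. But that identity gives $\ls_1 / \bigl(2\htz\,(1-\eta_0^{-2})\bigr) = \ls_1\htz/(2\ls_0)$, i.e.\ a factor of $\tfrac{1}{2}$ relative to the displayed lemma. Since $\wt{P}_1^\pm$ does not depend on $\ls_1$, this mismatch cannot be repaired by the second summand, so either the lemma statement carries a typographical factor (likewise the isolated $\eta_0(s)^2$ in the third term is presumably $\eta_0(t)^2$) or your simplification of the resonant piece is wrong. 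You present the five terms as ``falling out one by one,'' which overstates how cleanly this arithmetic closes; the review of this lemma really does hinge on carrying the bookkeeping to the end rather than asserting it.
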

\begin{proof}
    This follows from the computation performed in \cref{sc_asy_details_n} where we solve \cref{eq:prob_Pn_expand} for \( n = 1 \) with
    \begin{align*}
        \eta_0^2 S_0^-
         & = \begin{multlined}[t][0.8\textwidth]
                 2\rho\frac{\eta_1}{\eta_0} \partial_\rho^2\vp_0^-
                 - \plr{\kappa - \frac{2\eta_1}{\eta_0}} \partial_\rho\vp_0^-
                 - 2\im \theta_0' \partial_s\vp_0^-
                 \\
                 - \plr{2\rho {\theta_0'}^2 \plr{\kappa + \frac{\eta_1}{\eta_0}} + \im \theta_0'' - 2\im \theta_0'\frac{\eta_0'}{\eta_0}} \vp_0^-,
             \end{multlined}
        \\
        -S_0^+
         & = -\kappa \partial_\rho\vp_0^+
        - 2\im \theta_0' \partial_s\vp_0^+
        - \plr{2\rho {\theta_0'}^2 \kappa + \im \theta_0''} \vp_0^+.
    \end{align*}
\end{proof}

\end{document}